\documentclass[twoside,11pt]{amsart}
\usepackage{amsfonts,amssymb,amsmath,amsthm}
\usepackage{hyperref}
\usepackage{tabularx}           
\usepackage{booktabs}           
\usepackage{array}     
\usepackage{xltabular}

\begin{document}
	
	\title[Normalized quadratic extensions of pointed Hopf algebras]
	{Normalized quadratic extensions of pointed Hopf algebras}
	
	\author{Rongchuan Xiong}
	\address{Department of Mathematics, Changzhou University, Changzhou 213164, China}
	\email{rcxiong@foxmail.com}
	
	\subjclass[2020]{16T05, 16T15 }
	\keywords{pointed Hopf algebra, quadratic extension, coalgebra Hochschild cohomology, gauge transformation}
	\maketitle
	
	\newcommand{\K}{\mathbb{K}}
	\newcommand{\Z}{\mathbb{Z}}
	\newcommand{\G}{\mathbf{G}}
	\newcommand{\Pp}{\mathcal{P}}
	\newcommand{\cL}{\mathcal{L}}
	\newcommand{\gr}{\operatorname{gr}}
	\newcommand{\id}{\operatorname{id}}
	\newcommand{\Aut}{\operatorname{Aut}}
	\newcommand{\Hom}{\operatorname{Hom}}
	\newcommand{\End}{\operatorname{End}}
	\newcommand{\HH}{\operatorname{H}}
	\newcommand{\im}{\operatorname{im}}
	\newcommand{\Gau}{\operatorname{Gau}}
	\newcommand{\Char}{\operatorname{char}}
	
	\newtheorem{thm}{Theorem}[section]
	\newtheorem{lem}[thm]{Lemma}
	\newtheorem{pro}[thm]{Proposition}
	\newtheorem{cor}[thm]{Corollary}
	\newtheorem{rmk}[thm]{Remark}
	\newtheorem{defn}[thm]{Definition}
	
	\begin{abstract}
		Let $L$ be a finite-dimensional pointed Hopf algebra over an
		algebraically closed field. We establish an intrinsic characterization
		of index-two extensions: every Hopf algebra $H$ containing $L$ as a
		Hopf subalgebra and satisfying $H_0=L_0$ and
		$\dim H=2\dim L$ is a normalized quadratic extension of $L$.
		For a fixed support $(g,h)$, such extensions are described by a
		coalgebra Hochschild $2$-cocycle together with Ore-type data
		$(\sigma,\delta,u,v)$ satisfying explicit compatibility conditions.
		Their isomorphism classes are parameterized by the orbits of gauge
		transformations and Hopf automorphisms of $L$. As an application, we
		classify non-connected pointed Hopf algebras of dimension $16$ in
		characteristic $2$ with one-dimensional infinitesimal braiding whose
		diagram is not a Nichols algebra.
	\end{abstract}
	
	\section{Introduction}\label{sec:intro}
	
	The classification of finite-dimensional pointed Hopf algebras over
	an algebraically closed field of positive characteristic is a basic
	problem in Hopf algebra theory. For such an algebra $H$, the
	coradical filtration gives rise to an associated graded Hopf algebra
	$\gr H\cong R\#\mathbb K[\G(H)]$, where $R$ is a connected graded braided
	Hopf algebra. The algebra $R$ is called the diagram of $H$, and its
	degree-one part $R(1)$ is the infinitesimal braiding. The subalgebra
	of $R$ generated by $R(1)$ is the Nichols algebra
	$\mathcal B(R(1))$.
	
	The lifting method of Andruskiewitsch and Schneider \cite{AS98b}
	uses these data to classify pointed Hopf algebras. When $R$ equals
	$\mathcal B(R(1))$, the classification of $H$
	reduces to the classification of $R$ and to lifting the relations in
	the bosonization $R\#\K[\G(H)]$. When $R$ strictly contains $\mathcal B(R(1))$, every minimal generating
	set of $H$ over $\K[\G(H)]$ contains elements of coradical filtration
	degree at least two, called \emph{non-primitive generators},
	which are neither group-like nor skew-primitive.
	For such generators one must determine their coproducts and relations. In positive characteristic, such
	generators already occur in low dimensions, and their treatment
	involves additional coproduct and relation data beyond the Nichols
	algebra part.
	
	Ore extensions provide another construction of pointed Hopf
	algebras. Beattie, Dăscălescu and Grünenfelder used Ore
	extensions to construct pointed Hopf algebras \cite{Beattie2000}.
	Panov introduced the notion of Hopf Ore extension, where the
	comultiplication of the adjoined variable is linear in that variable
	with group-like coefficients and no extra cocycle term \cite{Panov2003}.
	You, Wang and Chen generalized this construction by allowing a cocycle term in the
	comultiplication, derived necessary and sufficient conditions for such
	an Ore extension to be a Hopf algebra, and classified generalized
	Hopf--Ore extensions for enveloping algebras of several low-dimensional
	Lie algebras \cite{You2018}. 
	
	For a pointed coalgebra, \v{S}tefan and van Oystaeyen related
	coalgebra Hochschild cohomology with coefficients in one-dimensional
	bicomodules to the coradical filtration \cite{SO}. This yields a
	cohomological framework for studying the coproducts of non-primitive
	generators in a minimal generating set. The perspective is closely
	related to techniques that have appeared in the classification of
	connected Hopf algebras of dimensions $p^2$ \cite{W1} and $p^3$
	\cite{W2,NWW1,NWW2}. Wang, Zhang and Zhuang introduced the terminology
	of primitive cohomology for Hopf algebras and developed this viewpoint
	further in their classification of rank-one pointed Hopf algebras over
	an algebraically closed field of characteristic zero \cite{WZZ}.
	Brown and Zhang used primitive cohomology to compute comultiplication
	corrections in their classification of 2-step iterated Hopf Ore
	extensions \cite{Brown2022}. Related cohomological techniques have also
	appeared in the classification of pointed Hopf algebras of dimensions
	$p^3$ \cite{NW}, $p^2q$ and $pq^2$ \cite{X17,X23a}. Such a viewpoint
	suggests that the coproduct of a higher-degree generator should be
	treated as a cohomology class together with the corresponding
	multiplication data.

	In this paper, we develop a general framework for extensions of finite-dimensional pointed Hopf algebras by one additional generator. We start from a fixed finite-dimensional pointed Hopf
	algebra $L$ and consider Hopf algebras $H$ that contain $L$ as a Hopf
	subalgebra and are generated by $L$ and one additional element $w$,
	which satisfies
	\[
	\Delta(w)=w\otimes g+h\otimes w+\omega,
	\qquad g,h\in \G(L),\quad \omega\in L\otimes L.
	\]
	Coassociativity makes $\omega$ a coalgebra Hochschild $2$-cocycle
	with coefficients in the one-dimensional bicomodule $M_{g,h}$. This viewpoint includes ordinary Hopf Ore extensions as a special case when the coproduct correction is trivial.
	Replacing $w$ by $w+b$, with $b\in L$, changes $\omega$ by a
	coboundary; thus only the cohomology class of $\omega$ in
	$\HH^2_{g,h}(L)$ is an invariant of the coproduct of $w$.
	The multiplication requires additional relations, and these must satisfy
	both associativity and compatibility with the coproduct.
	
	We consider normalized quadratic extensions, that is, extensions for
	which $\{1,w\}$ is a free left $L$-basis and
	\[
	wa=\sigma(a)w+\delta(a)\quad(a\in L),\qquad w^2=u+vw.
	\]
	Here $\sigma$ is a unital algebra endomorphism and $\delta$ is a
	$(\sigma,1)$-derivation. The admissible tuples
	$(g,h,\omega,\sigma,\delta,u,v)$ form the set
	$\mathcal D(L;g,h)$ of Definition~\ref{def:extension-data}.
	Its equations express normalization, associativity, and compatibility
	with the coproduct and counit. Let $\Gamma_{g,h}$ be the group
	generated by changes $w\mapsto\alpha w+b$, with
	$\alpha\in\mathbb K^\times$ and $b\in L^+$, and automorphisms of $L$
	fixing $g,h$, with multiplication as in
	Definition~\ref{def:equivalence}.

	\begin{thm}\label{thm:intro-extensions}
		Let $L$ be a finite-dimensional pointed Hopf algebra over $\K$, and
		fix $g,h\in \G(L)$. Every tuple in $\mathcal D(L;g,h)$ defines a
		normalized quadratic extension with specified support $(g,h)$, of
		dimension $2\dim L$ and with coradical $L_0$. Reconstruction induces
		a bijection
		\[
		\mathcal D(L;g,h)/\Gamma_{g,h}
		\longleftrightarrow
		\left\{
		\begin{array}{c}
			\text{normalized quadratic extensions with specified support }(g,h)\\
			\text{up to isomorphisms as in Definition~\ref{def:normalized}}
		\end{array}
		\right\}.
		\]
	\end{thm}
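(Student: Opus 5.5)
The plan has two parts: build the Hopf algebra from a tuple, then show that the equivalence relation on tuples matches isomorphism of extensions.

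\textbf{Construction.} Given a tuple $(g,h,\omega,\sigma,\delta,u,v)\in\mathcal D(L;g,h)$, I would first form the Ore extension $L[w;\sigma,\delta]$. This is legitimate because $\sigma$ is a unital algebra endomorphism and $\delta$ is a $(\sigma,1)$-derivation, so $L[w;\sigma,\delta]$ is free as a left $L$-module on the powers $w^n$. Next I pass to the quotient by the two-sided ideal generated by $w^2-u-vw$. The key point is that this quotient $H$ has left $L$-basis $\{1,w\}$. I would get this from the diamond lemma, or equivalently by checking directly that the ideal equals the left $L$-module $L[w;\sigma,\delta](w^2-u-vw)$. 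Two ambiguities must resolve:
\begin{itemize}
\item overlapping $w^2\cdot a$ with $w\cdot(wa)$;
\item overlapping $w^2\cdot w$ with $w\cdot w^2$.
\end{itemize}
These produce exactly the associativity equations in Definition~\ref{def:extension-data}:
\begin{itemize}
\item $\sigma^2(a)u+\delta'(a)=ua$-type identities coming from $w^2a=w(wa)$;
\item $\sigma(u)w+\delta(u)+\sigma(v)(u+vw)+\delta(v)w=vu+v^2w+uw$-type identities coming from $w\cdot w^2=w^2\cdot w$.
\end{itemize}
So $\dim H=2\dim L$.

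\textbf{Hopf structure on $H$.} I would first define $\Delta$ and $\varepsilon$ on the Ore extension by $\Delta(w)=w\otimes g+h\otimes w+\omega$ and the given $\varepsilon(w)$. These extend to algebra maps precisely because of the compatibility conditions, which say that $\Delta(w)\Delta(a)=\Delta(\sigma(a))\Delta(w)+\Delta(\delta(a))$. Then I would check that $\Delta$ sends $w^2-u-vw$ into the ideal generated by that element in $H\otimes H$; this is again one of the listed equations. Coassociativity on $w$ is the cocycle condition on $\omega$, and the counit axiom is the normalization condition. This makes $H$ a bialgebra. Since $H$ is finite dimensional and its coradical filtration starts from $H_0=L_0$, $H$ is pointed. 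Here $w\in H_{n+1}$ whenever $\omega\in (L\otimes L)$ lies in filtration degree $n$, and \v{S}tefan--van Oystaeyen type arguments give that no new grouplikes appear. A pointed bialgebra whose coradical is a group algebra has an antipode (Takeuchi), so $H$ is a Hopf algebra. In this way the tuple gives a normalized quadratic extension of the required dimension and support.

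\textbf{Bijection.} Conversely, start from a normalized quadratic extension with a chosen $w$. Writing out $\Delta(w)$, $wa$ and $w^2$ in the basis $\{1,w\}$ recovers a tuple. The equations of Definition~\ref{def:extension-data} are then necessary, since they are precisely associativity, coassociativity and multiplicativity of $\Delta$ read off in $H$. So reconstruction is surjective onto isomorphism classes. For injectivity and well-definedness, suppose $\phi\colon H\to H'$ is an isomorphism as in Definition~\ref{def:normalized}. Then $\phi$ restricts to an automorphism $\tau$ of $L$ fixing $g$ and $h$. Also $\phi(w)$ is a $(g,h)$-type element of $H'$ not in $L$, so $\phi(w)=\alpha w'+b$. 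To see this, compare coproducts modulo $L\otimes L$: the component of $\phi(w)$ outside $L$ spans, by freeness, a rank-one piece. The condition $b\in L^+$ can then be arranged from the counit normalization. Transporting the formulas shows that the tuple of $H'$ is obtained from that of $H$ by the action of $(\alpha,b,\tau)\in\Gamma_{g,h}$ from Definition~\ref{def:equivalence}. Conversely, each element of $\Gamma_{g,h}$ induces such an isomorphism of the reconstructed algebras.

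\textbf{Main obstacle.} The hard step is the basis claim in the construction: showing the quotient has dimension exactly $2\dim L$, which amounts to the overlap computation resolving. I would do this by verifying the left-module identity $(w^2-u-vw)a\in L[w;\sigma,\delta](w^2-u-vw)$ and $w(w^2-u-vw)\in L[w;\sigma,\delta](w^2-u-vw)$ directly from the defining equations. A second delicate point is showing that every isomorphism has the form $w\mapsto\alpha w+b$, which uses freeness of $\{1,w\}$.
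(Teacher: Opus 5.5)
Your outline follows the paper's route: overlap resolution for the basis $\{1,w\}$; extension of $\Delta$ and $\epsilon$ via \eqref{eq:comp1}--\eqref{eq:comp4}, $\epsilon\delta=0$ and $\epsilon(u)=0$; coassociativity from the cocycle identity; Takeuchi for the antipode; and the rigidity $\Phi(w)=\alpha w'+b$ for the bijection. Your direct check that $L[w;\sigma,\delta](w^2-u-vw)$ is a two-sided ideal is a valid substitute for the Diamond Lemma.

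There is one real gap: you never prove that $H$ has coradical $L_0$. This is part of the statement, and it is needed before Takeuchi's theorem can be applied. The three things you offer do not establish it:
\begin{itemize}
\item Saying the coradical filtration ``starts from $H_0=L_0$'' simply restates the claim.
\item The remark $w\in H_{n+1}$ places $w$ in a filtration whose bottom term is still unknown.
\item ``No new grouplikes appear'' would not give pointedness anyway. You must exclude simple subcoalgebras of $H$ of dimension greater than one that do not lie in $L$, and an appeal to \v{S}tefan--van Oystaeyen presupposes that $H$ is already pointed.
\end{itemize}

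The missing observation is the one the paper uses. Since $W=w\otimes g+h\otimes w+\omega$ lies in $H\otimes L+L\otimes H$, and $\Delta(aw+b)=\Delta(a)W+\Delta(b)$ for $a,b\in L$, one has $\Delta(H)\subseteq H\otimes L+L\otimes H$. In other words, $F_0=L$, $F_1=H$ is a coalgebra filtration. Now let $S\subseteq H$ be a simple subcoalgebra. Then $\Delta(S)\subseteq S\otimes(S\cap L)+(S\cap L)\otimes S$, so the counit axiom forces $S\cap L\neq0$, and simplicity gives $S\subseteq L$. Hence $H_0=L_0=\K[\G(L)]$, $H$ is pointed, and $\G(H)=\G(L)$ is a group.

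With this inserted, the rest goes through. In the rigidity step, make ``rank-one piece'' precise as in Lemma~\ref{lem:generator-rigidity}. Write $\Phi(w)=aw'+b$ with $a,b\in L$. The $w'\otimes1$ coefficient of $\Delta(\Phi(w))$ gives $\Delta(a)(1\otimes g)=a\otimes g$, so $\Delta(a)=a\otimes1$ and $a\in\K1$. Finally, $\epsilon(b)=0$ is not something you arrange: it is forced by $\epsilon(\Phi(w))=0$.
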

	Thus the classification problem splits into two parts: the cohomological classification of possible coproduct corrections and the algebraic classification of the compatible multiplication data. By
	Lemma~\ref{lem:generator-rigidity}, each isomorphism of normalized
	extensions with the same specified support sends the chosen generator
	to $\alpha w'+b$, with $\alpha\in\mathbb K^\times$ and $b\in L^+$;
	this is the change of generator appearing in the gauge group. If the
	support is not fixed, group-like multiples must be added as well
	(see Remark~\ref{rmk:unmarked-supports}).

	The following result shows that this construction gives an intrinsic characterization of all index-two extensions with the same coradical.
	\begin{thm}\label{thm:intro-recognition}
		Let $H$ be a finite-dimensional pointed Hopf algebra, and let
		$L\subset H$ be a Hopf subalgebra. Then $H$ is a normalized
		quadratic extension of $L$ if and only if
		\[
		H_0=L_0
		\qquad\text{and}\qquad
		\dim H=2\dim L.
		\]
	\end{thm}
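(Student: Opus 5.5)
The forward direction is immediate from the definition and Theorem~\ref{thm:intro-extensions}. If $H$ is a normalized quadratic extension of $L$, then $\{1,w\}$ is a free left $L$-basis, so $\dim H=2\dim L$. Moreover, the reconstruction in Theorem~\ref{thm:intro-extensions} shows that the coradical of $H$ is $L_0$. Intrinsically, $w$ has coradical degree at least one, so no new group-likes or simple subcoalgebras appear: a group-like $x=a+bw$ with $b\neq0$ would contradict $\Delta(w)=w\otimes g+h\otimes w+\omega$. Comparing the $w\otimes w$ components of $\Delta(x)$ forces $b\otimes b=0$.

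For the converse, assume $H_0=L_0$ and $\dim H=2\dim L$. The plan has four steps.

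\textbf{Step 1 (quotient coalgebra).} Consider the quotient $Q=H/L^+H$, or more conveniently the left $L$-module coalgebra $\bar H=H/HL^+$. By Nichols--Zoeller, $H$ is free over $L$ of rank $2$, so $\dim\bar H=2$. Since $H_0=L_0\subset L$, the image of $H_0$ in $\bar H$ is $\K\bar1$. Hence $\bar H$ is a pointed two-dimensional coalgebra with one group-like element, so $\bar H=\K\bar1\oplus\K\bar z$ with $\bar z$ primitive-like.

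\textbf{Step 2 (find the generator).} Use the coradical filtration $H_n$ and choose the smallest $n$ with $H_n\not\subset L$. Using the Taft--Wilson theorem for pointed coalgebras in the refined form available via \cite{SO}, write $H_n=\bigoplus_{g,h}{}^{h}(H_n)^{g}$ modulo $H_{n-1}$. Then pick an element $w\in {}^{h}(H_n)^{g}\setminus L$ with
\[
\Delta(w)-w\otimes g-h\otimes w\in H_{n-1}\otimes H_{n-1}\subset L\otimes L.
\]
This uses minimality of $n$: every element of $H_{n-1}$ lies in $L$, which holds because $H_{n-1}\subseteq L$. This is the main obstacle. One must ensure both that the correction $\omega$ lies in $L\otimes L$ and not merely in $H_{n-1}\otimes H_{n-1}$, and that the leading term is exactly $w\otimes g+h\otimes w$. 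I would derive this from the Taft--Wilson decomposition $\Delta(x)\in x\otimes g+h\otimes x+H_{n-1}\otimes H_{n-1}$ for $x\in{}^{h}(H_n)^{g}$, and then normalize $\varepsilon(w)=0$.

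\textbf{Step 3 (basis and relations).} Since $w\notin L$ and $\dim H=2\dim L$ with $H$ free over $L$, the element $w$ maps to a nonzero element of $\bar H$, so $H=L\oplus Lw$. Then for $a\in L$ write $wa=\sigma(a)w+\delta(a)$ and $w^2=vw+u$ with $\sigma(a),\delta(a),u,v\in L$, which is possible by freeness. Standard Ore arguments, as in the associativity equations of Definition~\ref{def:extension-data}, show that $\sigma$ is a unital algebra endomorphism and $\delta$ is a $(\sigma,1)$-derivation. All remaining conditions of $\mathcal D(L;g,h)$ then hold automatically because $H$ is a Hopf algebra. Thus $H$ is a normalized quadratic extension with support $(g,h)$.
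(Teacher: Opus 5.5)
Your forward direction is the paper's argument. Your Step 2 is also sound. Take $n$ minimal with $H_n\not\subseteq L$; then $n\ge1$ and $H_{n-1}\subseteq L$. Taft--Wilson gives $w\in H_n\setminus L$ with $\Delta(w)-w\otimes g-h\otimes w\in H_{n-1}\otimes H_{n-1}\subseteq L\otimes L$. Replacing $w$ by $w-\epsilon(w)h$ normalizes the counit and changes the correction only by $\epsilon(w)\,h\otimes g$. So what you call the main obstacle is not one.

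\textbf{The gap.} It is the one-line deduction of $H=L\oplus Lw$ in Step 3, which is the real content of the converse. First, you never show $w\notin HL^+$; the fact $w\notin L$ does not give it. Second, even a nonzero image in $H/HL^+=H\otimes_L\K$ would prove nothing. This quotient only detects \emph{right} $L$-generators, whereas you need a left basis. A Nakayama-type conclusion would also need $L^+\subseteq\operatorname{rad}L$, which fails in general: in the bases of Section~\ref{sec:case-dimR4} with $x$ primitive and $x^2=x$, the element $x\in L^+$ is idempotent. In fact no argument using only $w\notin L$, freeness and the dimension count can work. If $H=L\oplus Lw_0$ and $p\in L$ is an idempotent with $p\neq0,1$ (e.g.\ $p=1+x$ above), then $pw_0\notin L$, but $(1-p)pw_0=0$, so $L+Lpw_0\neq H$. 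The coproduct shape of $w$ must enter the argument, and your Step 3 never uses it.

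\textbf{The fix.} The missing idea is the fundamental theorem of Hopf modules, which is the heart of the paper's Lemma~\ref{lem:adapted-rank-two-basis}. Put $N=L+Lw$.
\begin{itemize}
\item The formula for $\Delta(w)$ gives $\Delta(N)\subseteq L\otimes N+N\otimes L$.
\item Hence $N/L$ is a left $L$-Hopf module, with action by left multiplication and coaction induced by $\Delta$. It is nonzero because $w\notin L$, so it is free.
\item Therefore $\dim N/L\ge\dim L=\dim H/L$, which forces $N=H$.
\item The map $a\mapsto a\bar w$ is then a surjection $L\to H/L$ between spaces of equal dimension, hence bijective. So $\{1,w\}$ is a free left $L$-basis.
\end{itemize}

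\textbf{Comparison with the paper.} The paper applies the same theorem along the filtration $C^{(n+1)}=L\wedge C^{(n)}$ to get $H=L\wedge L$. It then lifts a coinvariant of $H/L$, which produces support $(g,1)$ directly. With the step above inserted, your Taft--Wilson route becomes a valid alternative. It yields a generator of minimal coradical degree with general support $(g,h)$, and your Step 1 becomes unnecessary.
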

	
	Thus, for a fixed pointed Hopf algebra $L$, the isomorphism classes
	of all finite-dimensional pointed Hopf algebras $H$ with $L\subset H$,
	$H_0=L_0$, and $\dim H=2\dim L$ are parameterized by the orbit spaces
	\[
	\bigsqcup_{g,h\in G(L)}\mathcal D(L;g,h)\Big/\sim,
	\]
	where $\sim$ is the equivalence relation generated by gauge
	transformations, Hopf automorphisms of $L$, and group-like multiples
	of the generator, as in Theorem~\ref{thm:classification-index-two}.
	
	We apply this construction in characteristic $2$ to non-connected
	pointed Hopf algebras of dimension $16$ whose infinitesimal braiding
	is one-dimensional and whose diagram is not a Nichols algebra.
	The case where the diagram is a Nichols algebra was classified in
	\cite{X23}. Write $U_n(\mathbb K)=\{a\in\mathbb K^\times:a^n=1\}$,
	acting on $\K$ by multiplication.
	
	\begin{thm}\label{thm:intro-dimension16}
		Assume $\Char\K=2$. Let $H$ be a non-connected pointed
		Hopf algebra of dimension $16$ with one-dimensional infinitesimal
		braiding and non-Nichols diagram $R$. Then $\dim R\in\{4,8\}$.
		The Hopf isomorphism classes are exactly the entries of
		Tables~\ref{tab:isolated-classes} and \ref{tab:parameter-families},
		with the identifications stated there. More precisely:
		\begin{enumerate}
			\item If $\dim R=4$, there are $15$ individual representatives,
			three one-parameter families with parameter set $\mathbb K$, and one
			two-parameter family with parameter set $\mathbb K^2/\sim$, where
			$\sim$ is generated by
			\[
			(t,u)\longmapsto(t+1,u),\qquad
			(t,u)\longmapsto(t^{-1},u/t^3)\quad(t\ne0).
			\]
			\item If $\dim R=8$, there are nine individual representatives and
			four one-parameter families. Their parameter sets are, respectively,
			$\mathbb K/U_3(\mathbb K)$, $\mathbb K$, $\mathbb K/U_5(\mathbb K)$, and
			$\mathbb K/U_3(\mathbb K)$ for $\mathcal N_1$, $\mathcal N_2$, $\mathcal N_3$, and
			$\mathcal M_3$.
		\end{enumerate}
		There are no identifications between distinct families, between
		individual representatives, or between these two parts of the list.
		Every entry in the tables satisfies the hypotheses above.
	\end{thm}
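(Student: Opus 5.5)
The argument is a reduction: every such $H$ is an index-two extension of a smaller pointed Hopf algebra $L$, so Theorems~\ref{thm:intro-recognition} and \ref{thm:intro-extensions} reduce the classification to orbit computations in $\mathcal D(L;g,h)/\Gamma_{g,h}$.

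\textbf{Step 1: $\dim R\in\{4,8\}$.} Write $\gr H\cong R\#\K[\G(H)]$. Since $H$ is non-connected, $|\G(H)|\ge 2$. The infinitesimal braiding $V=R(1)$ is one-dimensional, spanned by some $x$ with $c(x\otimes x)=qx\otimes x$. In characteristic $2$ the only roots of unity are odd order, and $\mathcal B(V)$ is finite-dimensional. Its dimension is $2$ when $q=1$ (then $x^2$ is primitive, and $x^2=0$ in the Nichols algebra). More generally it equals the order of $q$, or $2$ in the symmetric case; I would check this case by case.

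Since $R\supsetneq\mathcal B(V)$, $R$ is a connected graded braided Hopf algebra with $\dim R$ a power of $2$, by a Nichols--Zoeller type freeness over $\mathcal B(V)$. Also $\dim R\ge 2\dim\mathcal B(V)\ge 4$. Then $\dim R\cdot|\G(H)|=16$ with $|\G(H)|\ge 2$ forces $\dim R\in\{4,8\}$, and correspondingly $|\G(H)|\in\{4,2\}$. Along the way this shows $q=1$, so $\mathcal B(V)=\K[x]/(x^2)$.

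\textbf{Step 2: build the tower.} Let $L\subset H$ be the Hopf subalgebra generated by $H_1$. Its graded version is $\mathcal B(V)\#\K[\G]$, or a slightly larger piece when $\dim R=8$, in which case the tower is built in two stages. Then $H_0=L_0$, and I would arrange $\dim H=2\dim L$ by choosing $L$ to be generated by the coradical filtration up to the degree just below the top non-primitive generator of $R$.

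In the case $\dim R=4$, $R$ is generated by $x$ and one non-primitive $y$ of degree $2$, so $L$ has dimension $8$. In the case $\dim R=8$, one first passes through an intermediate $L'$ of dimension $8$ with a $4$-dimensional diagram, then adjoins a further generator. Theorem~\ref{thm:intro-recognition} now makes $H$ a normalized quadratic extension of $L$.

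\textbf{Step 3: enumerate the possible $L$.} These are pointed Hopf algebras of dimension $8$ (respectively $4$) with one-dimensional braiding and Nichols diagram. They come from the classification in \cite{X23} and its small-dimensional analogues. For each $L$ and each support $(g,h)$, I compute $\HH^2_{g,h}(L)$ using the \v{S}tefan--van Oystaeyen description. The class of $\omega$ must be nonzero; otherwise $w$ could be made skew-primitive after a coboundary change, which would enlarge $V$.

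\textbf{Step 4: solve $\mathcal D(L;g,h)$.} Solve for the Ore data $(\sigma,\delta,u,v)$ from the explicit compatibility equations of Definition~\ref{def:extension-data}. Here $\sigma$ is essentially conjugation by a group-like, and $\delta$ is constrained by its coproduct compatibility. Then quotient by $\Gamma_{g,h}$, together with Hopf automorphisms of $L$ and changes of support (Theorem~\ref{thm:classification-index-two}), to obtain the tables.

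\textbf{Step 5: non-isomorphism.} Distinguish the entries using invariants: $\G(H)$, $\dim R$, the structure of $\gr H$, and the orbits of the parameter under rescaling $x\mapsto\lambda x$. These rescalings produce the $U_3$ and $U_5$ quotients and the relation $(t,u)\sim(t^{-1},u/t^3)$.

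The main obstacle is Steps 3--4 for $\dim R=8$. There the iterated extension requires solving the nonlinear associativity and coproduct equations for $u,v,\delta$ over an $8$-dimensional $L$ with nontrivial $\omega$, and then pinning down the exact gauge orbits. My first attempt would be to normalize $\omega$ to a fixed cocycle representative (built from $x\otimes x^{\,}$-type terms, such as $x\otimes y+y\otimes x$-like expressions) before solving for the Ore data. This reduces the remaining gauge group to the scalars $\alpha$ and the stabilizer of $\omega$.
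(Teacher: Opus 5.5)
Your overall strategy matches the paper's: an intrinsic index-two base $L$, a nonzero class in $\HH^2$, Ore data, then orbits. However, the proposal asserts the tables rather than deriving them, and two load-bearing steps are either missing or would fail as you describe them.

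\textbf{First gap: the structure of $R$ when $\dim R=8$.} You never pin down $R$ in low degrees, and the normalization of $\omega$ you propose is wrong. The paper needs three facts.
\begin{enumerate}
\item For $S=\K1\oplus\K x$ there is an injection $R(2)/S(2)\hookrightarrow\HH^{2,2}_{1,1}(S)=\K[x\otimes x]$, so $R(2)=\K y$.
\item $[x,y]$ and $y^2$ are primitive of degree $\ge3$, hence zero in $R$.
\item The normalized degree-$3$ cocycles of $B=\operatorname{span}\{1,x,y,xy\}$ form $\K(x\otimes y+y\otimes x)=d^1_{1,1}(\K xy)$. Hence $\HH^{2,3}_{1,1}(B)=0$ and $R(3)=\K xy$.
\end{enumerate}
Only this shows $\dim H_3=8$ and $L=\langle H_2\rangle=H_3$.

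In particular, the ``$x\otimes y+y\otimes x$-like'' representative you want to normalize to is a coboundary. The new generator $z$ has filtration degree $4$, and the class to use is $\omega_s=xys\otimes x+xs\otimes xy+y\otimes y$. It spans $\HH^2_{1,1}(L)$, while $\HH^2_{1,g}(L)=0$; this comes from $\Omega^2(S_i)\cong S_i$ over the dual truncated path algebra. The class is nontrivial because $\omega_s$ is homogeneous of degree $4$ and $L(4)=0$.

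Relatedly, after normalizing the support to $(1,h)$, condition \eqref{eq:comp1} makes $\sigma(a)=\sum\chi(a_{(1)})a_{(2)}$ a winding automorphism, not a conjugation. You still have to prove $\sigma=\id$. For $s=1$, $\epsilon=1$, the option $\sigma(x)=x+1$ is excluded only because \eqref{eq:comp3} would give $d^1_{1,1}(\delta(x))=x\otimes x$, contradicting $[x\otimes x]\neq0$.

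\textbf{Second gap: non-isomorphism.} Your non-isomorphism step does not work as stated. For a fixed group and support, $\G(H)$, $\dim R$ and $\gr H$ are the same for every entry, since all lifted relations vanish in $\gr H$. So these invariants cannot separate $\mathcal N_2(t)$ from $\mathcal N_2(t')$, or the four algebras $\mathcal E(r,d)$.

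What the paper uses instead is the following.
\begin{enumerate}
\item $L$ is defined through the coradical filtration, so it is preserved by every Hopf isomorphism, and every isomorphism is base-preserving.
\item Generator rigidity then forces $x\mapsto Ax'$ (or $Ax'+c(1+s)$ when $s\neq1$), $y\mapsto A^2y'+\cdots$, and $z\mapsto A^4z'+\cdots$.
\item The identifications are read off by comparing relations.
\end{enumerate}

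These identifications are not just rescalings. Artin--Schreier shifts are essential, for example $y\mapsto y+Bx$, or $x\mapsto x+c(1+g)$ with $c^2+c=\lambda$. The latter, together with $h\mapsto gh$, gives $D(1,\gamma,\lambda)\cong D(1,0,0)$. The action of $\Aut(\Z_2\times\Z_2)$ is also essential. The relation on $\K^2$ comes from $h\mapsto gh$, giving $(t,u)\mapsto(t+1,u)$, and from interchanging $g,h$ followed by rescaling, giving $(t^{-1},u/t^3)$. Rescaling $x$ alone produces neither.
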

	
	The full statement is Theorem~\ref{thm:dim16-classification}, with the parameter
	sets written as orbit spaces. The full presentations and supports
	are specified in the two tables. The list contains $24$
	individual representatives and eight parameter families.
	For $\dim R=4$, the Hopf subalgebra
	$L=\langle G(H),x\rangle$ has dimension $8$ and is preserved by every
	Hopf isomorphism; adjoining $y$ gives $H$. For $\dim R=8$, the Hopf
	subalgebra $L=\langle H_2\rangle=H_3$ has dimension $8$ and is
	preserved by every Hopf isomorphism; adjoining $z$ gives $H$. The
	relevant second cohomology groups and their homogeneous degrees
	determine the possible coproduct structures. Comparison of
	coproducts restricts the commutators and squares to explicit lower
	filtration terms. The associativity equations and the Diamond Lemma
	\cite{B} give the remaining restrictions and verify the dimensions.
	Since every Hopf isomorphism preserves $L$, the explicit changes of
	generators determine the Hopf isomorphism classes, including all
	residual parameter identifications.
	
	The paper is organized as follows. Section~\ref{sec:prelim} collects
	basic facts on coradical filtrations, diagrams, infinitesimal
	braidings, and coalgebra Hochschild cohomology.
	Section~\ref{sec:fixed-base} develops the framework of simple
	extensions and proves Theorems~\ref{thm:intro-extensions} and \ref{thm:intro-recognition}.
	Section~\ref{sec:classification} states
	Theorem~\ref{thm:intro-dimension16} and presents the classification
	tables. Its proof is completed in Section~\ref{sec:case-dimR4} for
	the case $\dim R=4$ and in Section~\ref{sec:case-dimR8} for the case
	$\dim R=8$.

	\section{Preliminaries}\label{sec:prelim}
	\subsection*{Notations}
	Throughout, $\K$ is an algebraically closed field. All vector spaces,
	tensor products, and algebras are over $\K$, and all Hopf algebras are
	finite-dimensional unless otherwise stated.
	
	For a Hopf algebra $A$, we write $\Delta_A$, $\epsilon_A$, and $S_A$
	for its coproduct, counit, and antipode; when no confusion arises, we
	omit the subscript. Sweedler notation
	$\Delta(a)=\sum a_{(1)}\otimes a_{(2)}$ will be used.
	
	For a coalgebra $C$, $\G(C)$ denotes the set of group-like elements
	of $C$. For an algebra $A$, we set $[a,b]=ab-ba$; when $A$ is a Hopf
	algebra, we write $A^+=\ker\epsilon_A$. For  $n\in\mathbb N$, $\Z_n$ denotes the cyclic group of
	order $n$ and $U_n(\K)=\{a\in\K^\times:a^n=1\}$.
	
	Other standard symbols: $\Aut$, $\End$, $\Hom$, $\id$, $\im$, and
	$\Char$ denote automorphism, endomorphism, homomorphism, identity map,
	image, and characteristic, respectively. The multiplicative group of
	$\K$ is written $\K^\times$. We refer to \cite{R11} for the general theory of Hopf algebras.
	
	\subsection{Coalgebras and the coradical filtration}
	
	For a coalgebra $C$, its coradical $C_0$ is the sum of its simple
	subcoalgebras. The coradical filtration $\{C_n\}_{n\ge0}$ is defined
	recursively by
	\[
	C_n=\Delta^{-1}(C_0\otimes C+C\otimes C_{n-1}),\quad n\geq 1.
	\]
	We have $C=\bigcup_{n\ge0} C_n$, and the filtration satisfies
	\[
	\Delta(C_n)\subseteq\sum_{i=0}^n C_i\otimes C_{n-i}.
	\]
	
	A coalgebra is \emph{pointed} if all its simple subcoalgebras are
	one-dimensional. For a pointed Hopf algebra $A$, we have
	$A_0=\K[\G(A)]$; $A$ is \emph{connected} whenever $A_0=\K 1$. The
	coradical filtration of a pointed Hopf algebra is a Hopf algebra
	filtration.
	
	Given $g,h\in\G(C)$, the space of $(g,h)$-skew-primitive elements is
	\[
	\Pp_{g,h}(C)=\{c\in C\mid \Delta(c)=c\otimes g+h\otimes c\}.
	\]
	Every element of $\Pp_{g,h}(C)$ has vanishing counit, and
	$\K(g-h)\subseteq\Pp_{g,h}(C)$. For a Hopf algebra,
	$\Pp(A)=\Pp_{1,1}(A)$ is the space of primitive elements.
	
	The first term of the coradical filtration of a pointed coalgebra
	decomposes as
	\[
	C_1=C_0+\sum_{g,h\in\G(C)}\Pp_{g,h}(C).
	\]
	Choosing vector-space complements such that
	$\Pp_{g,h}(C)=\K(g-h)\oplus Q_{g,h}$, we obtain
	$C_1=C_0\oplus\bigoplus_{g,h}Q_{g,h}$.
	
	If $D\subseteq C$ is a subcoalgebra, it inherits the filtration
	$D_n=D\cap C_n$. Coalgebra isomorphisms preserve the coradical
	filtration; consequently, Hopf algebra isomorphisms preserve
	subalgebras generated by the terms of the coradical filtration.
	
	\subsection{The diagram and the infinitesimal braiding}
	A left Yetter--Drinfeld module over $\K[G]$ is a $G$-graded vector space
	$V=\bigoplus_{s\in G}V_s$ equipped with a left $G$-action satisfying
	$t\cdot V_s\subseteq V_{tst^{-1}}$. Its braiding map is given by
	\[
	c(v\otimes w)=s\cdot w\otimes v,\qquad v\in V_s.
	\]
	
	Let $H$ be a pointed Hopf algebra and let $\pi:\gr H\to\K[\G(H)]$ denote
	the projection onto degree zero. The coinvariant subalgebra
	\[
	R=(\gr H)^{\operatorname{co}\pi}
	=\{a\in\gr H\mid (\id\otimes\pi)\Delta(a)=a\otimes 1\}
	\]
	is a connected graded braided Hopf algebra in the Yetter--Drinfeld
	category, and $\gr H\cong R\#\K[\G(H)]$. In particular,
	$\dim H=|\G(H)|\dim R$.
	
	The algebra $R$ is called the \emph{diagram} of $H$, and
	$V=R(1)$ is the \emph{infinitesimal braiding}. In particular,
	$\Pp(R)=R(1)$. The Nichols algebra $\mathcal B(V)$ is the connected
	graded braided Hopf algebra generated by $V$, with degree-one part and
	primitive part both equal to $V$ \cite{AS98b}. Equivalently, it is the
	quotient of the braided tensor algebra $T(V)$ by its largest homogeneous
	braided Hopf ideal contained in $\bigoplus_{n\ge2}T^n(V)$. The
	subalgebra of $R$ generated by $R(1)$ is isomorphic to
	$\mathcal B(R(1))$. If $R$ strictly contains $\mathcal B(R(1))$, then
	$R$ is called a \emph{non-Nichols diagram}.
	
	When $V=\K x$ is one-dimensional, its coaction reads
	$x\mapsto s\otimes x$ for some $s\in Z(G)$, and the group action is
	given by a character $\chi:G\to\K^\times$. The element $s$ is called
	the \emph{support} of $x$, and
	$c(x\otimes x)=\chi(s)x\otimes x$. If $G$ is a finite $p$-group and
	$\Char\K=p$, then every such character is trivial. In that case
	$\mathcal B(\K x)=\K[x]/(x^p)$: indeed, $x^p$ is primitive in the
	tensor algebra under trivial braiding, and the quotient possesses no
	nonzero homogeneous primitive elements in degrees $2,\dots,p-1$.

	\subsection{Coalgebra Hochschild cohomology}
	Let $C$ be a pointed coalgebra. For $g,h\in\G(C)$, let
	$M_{g,h}=\K m$ denote the one-dimensional $C$-bicomodule with
	coactions
	\[
	m\mapsto h\otimes m,\qquad m\mapsto m\otimes g.
	\]
	The coalgebra Hochschild cochain groups with coefficients in
	$M_{g,h}$ identify with tensor powers $C^{\otimes n}$,
	$C^{\otimes 0}=\K$ \cite{SO,WZZ}. The coboundary operators are
	defined by
	\begin{align*}
		d^0_{g,h}(a)&=a(g-h),\\
		d^n_{g,h}(\eta)&=-h\otimes\eta
		+\sum_{i=1}^n(-1)^{i+1}
		\bigl(\id^{\otimes(i-1)}\otimes\Delta\otimes\id^{\otimes(n-i)}\bigr)(\eta)
		+(-1)^n\eta\otimes g,\quad n\geq 1.
	\end{align*}
	Coassociativity of $\Delta$, together with the grouplike identities
	$\Delta(g)=g\otimes g$ and $\Delta(h)=h\otimes h$, implies
	$d^{n+1}_{g,h}\circ d^n_{g,h}=0$. We write $\HH^n_{g,h}(C)$ for the
	corresponding cohomology.
	
	For $n=1$,
	\[
	d^1_{g,h}(b)=\Delta(b)-b\otimes g-h\otimes b,
	\qquad
	\HH^1_{g,h}(C)=\Pp_{g,h}(C)/\K(g-h),
	\]
	where $\Pp_{g,h}(C)$ is defined as in the preceding subsection. The
	$2$-cocycle condition is
	\[
	(\Delta\otimes\id)(\omega)+\omega\otimes g
	=(\id\otimes\Delta)(\omega)+h\otimes\omega.
	\]
	
	When $C$ is finite-dimensional, set $A=C^*$. Via the natural
	identification
	\[
	C^{\otimes n}\cong \Hom_{\K}(A^{\otimes n},\K),
	\]
	the differential $d^n_{g,h}$ is the negative of the algebra
	Hochschild differential for the one-dimensional $A$-bimodule with
	left evaluation at $h$ and right evaluation at $g$. Hence
	\begin{align}\label{eq:coalgebra-ext}
		\HH^n_{g,h}(C)\cong
		\operatorname{Ext}^n_{A}(\K_g,\K_h),
	\end{align}
	see \cite[Proposition 1.4]{SO}.
	
	\begin{lem}\label{lem:prelim-extension-injection}
		Let $D\subseteq C$ be pointed coalgebras such that $C_1\subseteq D$,
		and let $g,h\in\G(C)$. Define
		\[
		E_{g,h}(C,D)=\{c\in C\mid d^1_{g,h}(c)\in D\otimes D\}.
		\]
		Then $D\subseteq E_{g,h}(C,D)$, and the map
		\[
		E_{g,h}(C,D)/D\longrightarrow\HH^2_{g,h}(D),
		\qquad c+D\mapsto [d^1_{g,h}(c)]
		\]
		is injective.
	\end{lem}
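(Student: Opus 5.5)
Three things need to be checked: that the map is well defined, that $D\subseteq E_{g,h}(C,D)$, and that it is injective.

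\emph{Containment and well-definedness.} Since $D$ is a subcoalgebra containing $g,h$ (grouplikes lie in $C_0\subseteq C_1\subseteq D$), for $c\in D$ we have $d^1_{g,h}(c)=\Delta(c)-c\otimes g-h\otimes c\in D\otimes D$, so $D\subseteq E_{g,h}(C,D)$. For $c\in E_{g,h}(C,D)$, the element $\omega=d^1_{g,h}(c)$ lies in $D\otimes D$. Computed in $C$, $d^2_{g,h}(\omega)=d^2_{g,h}d^1_{g,h}(c)=0$. The differentials of $D$ are the restrictions of those of $C$, and $D^{\otimes n}\subseteq C^{\otimes n}$ injectively. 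Hence $\omega$ is a $2$-cocycle of $D$. The assignment $c\mapsto[d^1_{g,h}(c)]$ is linear, and $E_{g,h}(C,D)$ is a subspace since $d^1_{g,h}$ is linear. If $c\in D$, then $d^1_{g,h}(c)$ is a coboundary in $D$, so the class vanishes. The map therefore descends to $E_{g,h}(C,D)/D$.

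\emph{Injectivity.} Suppose $[d^1_{g,h}(c)]=0$ in $\HH^2_{g,h}(D)$, i.e.\ $d^1_{g,h}(c)=d^1_{g,h}(b)$ for some $b\in D$. Then $e=c-b$ satisfies $d^1_{g,h}(e)=0$, so $e\in\Pp_{g,h}(C)$. By the decomposition $C_1=C_0+\sum\Pp_{g,h}(C)$ from the preliminaries, $\Pp_{g,h}(C)\subseteq C_1$. (This also follows directly from $\Delta(e)=e\otimes g+h\otimes e\in C_0\otimes C+C\otimes C_0$.) Hence $e\in C_1\subseteq D$, and so $c=b+e\in D$. Thus the kernel is trivial.

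The main point is the last step: one must identify the kernel of $d^1_{g,h}$ on $C$ with skew-primitives, and these lie in $C_1$. This is exactly where the hypothesis $C_1\subseteq D$ enters. Everything else is formal, using that $D\hookrightarrow C$ induces an inclusion of cochain complexes.
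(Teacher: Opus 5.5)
Your proof is correct and follows the paper's argument essentially step for step. You get containment from $g,h\in C_1\subseteq D$, the cocycle property from $d^2_{g,h}d^1_{g,h}=0$ restricted to $D$, and injectivity from $c-b\in\Pp_{g,h}(C)\subseteq C_1\subseteq D$; your explicit remarks on linearity and on $D^{\otimes n}\subseteq C^{\otimes n}$ make precise what the paper leaves implicit.
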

	\begin{proof}
		Since $C_1\subseteq D$, we have $g,h\in D$, and therefore
		$d^1_{g,h}(D)\subseteq D\otimes D$. For $c\in E_{g,h}(C,D)$, the
		element $d^1_{g,h}(c)$ belongs to $D\otimes D$ by definition, and
		$d^2_{g,h}d^1_{g,h}(c)=0$, so it is a $2$-cocycle of $D$. If
		$c'\in c+D$, say $c'=c+b$ with $b\in D$, then
		$d^1_{g,h}(c')-d^1_{g,h}(c)=d^1_{g,h}(b)\in B^2_{g,h}(D)$; hence
		the class $[d^1_{g,h}(c)]$ is independent of the representative.
		Thus the map is well defined.
		
		For injectivity, suppose that $[d^1_{g,h}(c)]=0$ in
		$\HH^2_{g,h}(D)$. Then there exists $b\in D$ such that
		$d^1_{g,h}(c)=d^1_{g,h}(b)$. It follows that
		$c-b\in\Pp_{g,h}(C)\subseteq C_1\subseteq D$, whence $c\in D$.
		Therefore the kernel of the map is trivial.
	\end{proof}
	
	This injection encodes the relation between the coradical filtration
	and second cohomology used to adjoin new generators; cf.\
	\cite{SO,WZZ}.
	
	If $C=\bigoplus_{j\ge0}C(j)$ is nonnegatively graded with $g,h$ in
	degree zero, total tensor degree induces an internal grading
	\[
	\HH^n_{g,h}(C)=\bigoplus_{j\ge0}\HH^{n,j}_{g,h}(C).
	\]
	Let $R$ be a connected graded coalgebra with $\Pp(R)=R(1)$, and let
	$S$ be a graded subcoalgebra agreeing with $R$ in all degrees less
	than $m\ge2$. The map
	\[
	R(m)/S(m)\longrightarrow\HH^{2,m}_{1,1}(S),\qquad
	r+S(m)\longmapsto[\bar\Delta(r)],
	\]
	where $\bar\Delta(r)=\Delta(r)-r\otimes1-1\otimes r$, is well defined
	and injective. Indeed, for $r\in R(m)$, the reduced coproduct
	$\bar\Delta(r)$ lies in
	$\sum_{i=1}^{m-1}S(i)\otimes S(m-i)$. Moreover,
	$d^2_{1,1}(\bar\Delta(r))=d^2_{1,1}d^1_{1,1}(r)=0$, so
	$\bar\Delta(r)$ is a $2$-cocycle of $S$. If $r'=r+s$ with
	$s\in S(m)$, then $\bar\Delta(r')-\bar\Delta(r)=d^1_{1,1}(s)$, and
	hence $[\bar\Delta(r')]=[\bar\Delta(r)]$. Thus the map is well
	defined.
	
	If $[\bar\Delta(r)]=0$, then $\bar\Delta(r)=d^1_{1,1}(b)$ for some
	$b\in S(m)$. Hence $\bar\Delta(r-b)=0$, so $r-b$ is a primitive
	element of degree $m\ge2$. Since $\Pp(R)=R(1)$, it follows that
	$r-b=0$, and therefore $r\in S(m)$. Thus the map is injective. The
	same degree argument applies to connected graded braided Hopf
	algebras in a Yetter--Drinfeld category; cf.\ \cite{NW}.
	
	For finite-dimensional pointed Hopf algebras, filtering the cochain
	complex by the coradical filtration yields the dimension bound
	\[
	\dim\HH^n_{g,h}(H)\leq\dim\HH^n_{g,h}(\gr H)
	\]
	from \cite[Lemma 8.4]{WZZ}.

	\section{Simple extensions}\label{sec:fixed-base}
	
	Throughout this section, $L$ is a finite-dimensional pointed Hopf algebra
	over $\K$, and $L^+=\ker\epsilon_L$. 
	We identify $L$ with its image under each specified Hopf embedding.
	
	\begin{defn}\label{def:index}
		Let $L$ be a Hopf subalgebra of a finite-dimensional Hopf algebra $H$.
		By the Nichols--Zoeller theorem, $H$ is a free left $L$-module. For a
		positive integer $n$, we say that $H$ has \emph{index} $n$ over $L$ if
		$H$ is free of rank $n$ as a left $L$-module. Equivalently,
		$\dim H=n\dim L$.
	\end{defn}
	
	\begin{defn}\label{def:intrinsic}
		A Hopf subalgebra $L\subseteq H$ is called \emph{intrinsic} if it is
		preserved by every Hopf automorphism of $H$.
	\end{defn}
	
	\begin{defn}\label{def:sge}
		A \emph{simple extension} of $L$ is a
		finite-dimensional pointed Hopf algebra $H$ equipped with a Hopf
		embedding $L\hookrightarrow H$ such that $H$ is generated as an algebra
		by $L$ and one element of $H\setminus L$.
		A \emph{base-preserving isomorphism} is a Hopf algebra isomorphism
		$\Phi:H\to H'$ satisfying $\Phi(L)=L$.
	\end{defn}
	
	\begin{rmk}
		\normalfont
		The subspace $L^+H$ is a right ideal and a coideal, so $H/L^+H$ is
		a quotient coalgebra and a right $H$-module. When $L$ is normal in $H$,
		one has $L^+H=HL^+$, and this subspace is a Hopf ideal.
		In that case $H/L^+H$ is a quotient Hopf algebra.
	\end{rmk}
	
	\begin{defn}\label{def:normalized}
		Fix $g,h\in \G(L)$. A \emph{normalized quadratic extension with specified support
			$(g,h)$} is a simple extension admitting an element
		$w\in\ker\epsilon_H$ such that $\{1,w\}$ is a free left $L$-basis of $H$
		and
		\begin{align}
			\Delta(w)&=w\otimes g+h\otimes w+\omega,
			\qquad \omega\in L\otimes L,\label{eq:coproduct}\\
			wa&=\sigma(a)w+\delta(a)\quad(a\in L),\label{eq:ore}\\
			w^2&=u+vw,\qquad u,v\in L.\label{eq:power}
		\end{align}
		Here $\sigma:L\to L$ and $\delta:L\to L$ are the uniquely determined
		linear maps defined by \eqref{eq:ore}. An isomorphism of extensions
		with specified support is a base-preserving Hopf algebra isomorphism
		whose restriction to $L$ fixes both $g$ and $h$.
	\end{defn}
	
	By associativity of $H$ and uniqueness of the left $L$-basis expansion,
	the maps $\sigma$ and $\delta$ necessarily satisfy
	\begin{equation}\label{eq:sigma-delta}
		\sigma(1)=1,\qquad \delta(1)=0,\qquad
		\sigma(ab)=\sigma(a)\sigma(b),\qquad
		\delta(ab)=\sigma(a)\delta(b)+\delta(a)b
	\end{equation}
	for all $a,b\in L$. In particular, $\sigma$ is a unital algebra
	endomorphism and $\delta$ is a $(\sigma,1)$-derivation.
	
	\begin{pro}\label{pro:cocycle}
		In a normalized quadratic extension, $\omega\in L^+\otimes L^+$ and
		\begin{equation}\label{eq:cocycle-cond}
			(\Delta\otimes\id)(\omega)+\omega\otimes g
			=(\id\otimes\Delta)(\omega)+h\otimes\omega.
		\end{equation}
		Moreover, $\epsilon\delta=0$ and $\epsilon(u)=0$.
	\end{pro}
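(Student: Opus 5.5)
Three consequences have to be extracted: the cocycle identity, the counit identities, and the vanishing of $\epsilon\delta$ and $\epsilon(u)$. Each comes from applying a structural identity of $H$ to $w$ and then comparing coefficients in a suitable free basis.

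\emph{Cocycle identity.} We start from coassociativity $(\Delta\otimes\id)\Delta(w)=(\id\otimes\Delta)\Delta(w)$ and substitute \eqref{eq:coproduct}. The left side is
\[
w\otimes g\otimes g+h\otimes w\otimes g+\omega\otimes g+h\otimes h\otimes w+(\Delta\otimes\id)(\omega).
\]
The right side is
\[
w\otimes g\otimes g+h\otimes w\otimes g+h\otimes h\otimes w+h\otimes\omega+(\id\otimes\Delta)(\omega).
\]
The terms containing $w$ cancel identically, and what remains is exactly \eqref{eq:cocycle-cond}. This step needs no freeness assumption beyond the fact that all remaining terms lie in $L^{\otimes 3}$.

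\emph{Counit conditions and $\omega\in L^+\otimes L^+$.} We apply $(\epsilon\otimes\id)$ to \eqref{eq:coproduct}. Using $\epsilon(w)=0$ and $\epsilon(h)=1$, this gives $w=w+(\epsilon\otimes\id)(\omega)$, hence $(\epsilon\otimes\id)(\omega)=0$. Symmetrically, $(\id\otimes\epsilon)(\omega)=0$. We then write $\omega=\omega'+1\otimes b+c\otimes 1+\lambda\,1\otimes1$ with $\omega'\in L^+\otimes L^+$ and $b,c\in L^+$, via the decomposition $L=\K1\oplus L^+$. The first vanishing gives $b+\lambda 1=0$; since $b\in L^+$, this forces $b=0$ and $\lambda=0$. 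The second vanishing gives $c=0$. Hence $\omega=\omega'\in L^+\otimes L^+$.

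\emph{$\epsilon\delta=0$ and $\epsilon(u)=0$.} We apply the algebra map $\epsilon$ to \eqref{eq:ore}. The left side gives $\epsilon(w)\epsilon(a)=0$, and the right side gives $\epsilon(\sigma(a))\epsilon(w)+\epsilon(\delta(a))=\epsilon(\delta(a))$, so $\epsilon\delta=0$. Applying $\epsilon$ to \eqref{eq:power} gives $0=\epsilon(w)^2=\epsilon(u)+\epsilon(v)\epsilon(w)=\epsilon(u)$.

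I expect no real obstacle. The only point needing care is the bookkeeping in the counit step: the splitting $L\otimes L=(\K1\oplus L^+)^{\otimes2}$ must be used explicitly to conclude $\omega\in L^+\otimes L^+$, rather than only that its one-sided counit images vanish.
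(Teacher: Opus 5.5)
Your proposal is correct and follows essentially the same route as the paper's proof. Both arguments obtain the cocycle identity by cancelling the $w$-terms in coassociativity, obtain $\omega\in L^+\otimes L^+$ by applying $\epsilon\otimes\mathrm{id}$ and $\mathrm{id}\otimes\epsilon$ together with the splitting $L=\mathbb K1\oplus L^+$, and obtain $\epsilon\delta=0$ and $\epsilon(u)=0$ by applying $\epsilon$ to the Ore and square relations using $\epsilon(w)=0$; you simply write out the bookkeeping more explicitly.
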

	\begin{proof}
		Applying $\epsilon\otimes\id$ and $\id\otimes\epsilon$ to
		\eqref{eq:coproduct} gives
		$(\epsilon\otimes\id)(\omega)=(\id\otimes\epsilon)(\omega)=0$.
		Since $L=\K 1\oplus L^+$, $\omega\in L^+\otimes L^+$. Coassociativity on $w$ gives
		\eqref{eq:cocycle-cond} after cancellation of the terms containing $w$.
		Applying $\epsilon$ to \eqref{eq:ore} and \eqref{eq:power}, and using
		$\epsilon(w)=0$, yields $\epsilon\delta(a)=0$ for all $a\in L$ and
		$\epsilon(u)=0$.
	\end{proof}
	
	\begin{defn}\label{def:cohomology}
		For the one-dimensional $L$-bicomodule $M_{g,h}$ with left coaction
		$m\mapsto h\otimes m$ and right coaction $m\mapsto m\otimes g$,
		the \emph{twisted coalgebra Hochschild complex} has cochain spaces
		$C^n=L^{\otimes n}$, with $C^0=\K$. With the sign convention of
		Section~\ref{sec:prelim}, its first two differentials are
		\begin{align*}
			d^1_{g,h}(b)&=\Delta(b)-b\otimes g-h\otimes b,\\
			d^2_{g,h}(\eta)&=(\Delta\otimes\id)(\eta)+\eta\otimes g
			-(\id\otimes\Delta)(\eta)-h\otimes\eta.
		\end{align*}
		Write
		\[
		Z^2_{g,h}(L)=\ker d^2_{g,h},\qquad
		B^2_{g,h}(L)=d^1_{g,h}(L),\qquad
		\HH^2_{g,h}(L)=Z^2_{g,h}(L)/B^2_{g,h}(L).
		\]
		A \emph{normalized cocycle} is an element of
		$Z^2_{g,h}(L)\cap(L^+\otimes L^+)$.
	\end{defn}
	
	\begin{rmk}
		\normalfont
		Coassociativity gives $d^2_{g,h}d^1_{g,h}=0$. If two normalized cocycles
		differ by $d^1_{g,h}(b)$, then $b\in L^+$, since
		\[
		(\epsilon\otimes\id)d^1_{g,h}(b)=-\epsilon(b)g.
		\]
		Consequently, adding a coboundary to a normalized coproduct correction
		is precisely the effect of replacing $w$ by $w+b$, with $b\in L^+$.
	\end{rmk}

	\subsection{Normalized quadratic  extensions and Ore-type relations}
	
	\begin{pro}\label{pro:associative-data}
		Assume that $\sigma$ and $\delta$ satisfy \eqref{eq:sigma-delta}.
		Then the algebra presented by \eqref{eq:ore}--\eqref{eq:power} has free  left $L$-basis $\{1,w\}$ if and only if 
		\begin{align}
			\sigma^2(a)u+\delta^2(a)&=ua+v\delta(a),\label{eq:assoc-wwa-const}\\
			\sigma^2(a)v+\delta(\sigma(a))+\sigma(\delta(a))
			&=v\sigma(a)\quad(a\in L),\label{eq:assoc-wwa-coef}\\
			\delta(u)+\sigma(v)u&=vu,\label{eq:assoc-w2-const}\\
			\sigma(u)+\sigma(v)v+\delta(v)&=u+v^2.\label{eq:assoc-w2-coef}
		\end{align}
		Here $\sigma^2$ and $\delta^2$ denote composition.
	\end{pro}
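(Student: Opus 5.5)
Necessity comes from expanding two products in two ways. Suppose $\{1,w\}$ is a free left $L$-basis of the presented algebra $A$. Take $a\in L$ and compute $w^2a$ first as $w(wa)$:
\[
w(\sigma(a)w+\delta(a))=\sigma^2(a)w^2+\delta(\sigma(a))w+\sigma(\delta(a))w+\delta^2(a).
\]
Substituting $w^2=u+vw$ and using $\sigma(a)\in L$ gives
\[
\bigl(\sigma^2(a)u+\delta^2(a)\bigr)+\bigl(\sigma^2(a)v+\delta\sigma(a)+\sigma\delta(a)\bigr)w .
\]
Computing it instead as $(w^2)a=(u+vw)a$ gives
\[
\bigl(ua+v\delta(a)\bigr)+v\sigma(a)w .
\]
Comparing coefficients of $1$ and $w$ in the free basis yields \eqref{eq:assoc-wwa-const} and \eqref{eq:assoc-wwa-coef}.

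Next expand $w^3$. As $w\cdot w^2=w(u+vw)$ it equals
\[
\sigma(u)w+\delta(u)+\sigma(v)(u+vw)+\delta(v)w .
\]
As $w^2\cdot w=(u+vw)w$ it equals
\[
uw+v(u+vw).
\]
Comparing coefficients gives \eqref{eq:assoc-w2-const} and \eqref{eq:assoc-w2-coef}.

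For sufficiency, we build a model. Let $M=L\oplus Lw$ be the free left $L$-module on $\{1,w\}$. Define a bilinear product on $M$ by
\[
(a+bw)(c+dw)=ac+b\delta(c)+b\sigma(c)w+ad\,w+b\,(w d)\,w,
\]
where $wd=\sigma(d)w+\delta(d)$ and $w^2$ is reduced using $u+vw$. Writing this out, the $(bw)(dw)$ term is $b\bigl(\sigma(d)(u+vw)+\delta(d)w\bigr)$, so each product is given by an explicit formula in $L\oplus Lw$. The element $1$ is a unit, because $\sigma(1)=1$ and $\delta(1)=0$.

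Associativity of $M$ is the main point. By left $L$-linearity of the formula, and since $L$ is associative, it reduces to the triples $(x,y,z)$ with each of $x,y,z$ in $\{a,w\}$, $a\in L$:
\begin{itemize}
\item $(a,b,c)$: this holds because $L$ is associative.
\item $(w,a,b)$: this holds by the multiplicativity of $\sigma$ and the derivation rule for $\delta$ in \eqref{eq:sigma-delta}.
\item $(a,\cdot,\cdot)$ in general: these hold automatically, since the product is left $L$-linear.
\item $(w,w,a)$: this holds exactly by \eqref{eq:assoc-wwa-const} and \eqref{eq:assoc-wwa-coef}.
\item $(w,w,w)$: this holds exactly by \eqref{eq:assoc-w2-const} and \eqref{eq:assoc-w2-coef}.
\item $(w,a,w)$ and $(a,w,w)$: these hold directly from the definitions.
\end{itemize}
The computations are the same expansions as in the necessity part, read backwards.

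Once $M$ is an associative unital algebra, it contains $L$ and satisfies \eqref{eq:ore}–\eqref{eq:power}. The universal property of the presented algebra $A$ then gives a surjection $A\to M$. Conversely, $A$ is spanned by $L+Lw$, since any word in $L$ and $w$ reduces using \eqref{eq:ore} and \eqref{eq:power}. The surjection therefore maps a spanning set onto a basis, so it is an isomorphism and $\{1,w\}$ is a free left $L$-basis of $A$. One could alternatively invoke the Diamond Lemma \cite{B}, with the overlap ambiguities $wwa$ and $www$ resolving precisely under the four equations.

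The main obstacle is the careful bookkeeping in the associativity check for the triples $(w,w,a)$ and $(w,w,w)$, together with confirming that the remaining triples impose no further conditions beyond \eqref{eq:sigma-delta}.
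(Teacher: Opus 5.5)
Your necessity argument is the paper's: the same two expansions of $w^2a$ and of $w^3$.

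For sufficiency you take a different route. The paper invokes Bergman's Diamond Lemma. It presents $L$ by a multiplication table on a basis containing $1$, orders monomials by weighted degree (with $w$ of weight $2$) and then lexicographically, and checks that the ambiguities $wab$, $wwa$, $www$ resolve, by \eqref{eq:sigma-delta} and by the four identities respectively. You instead build the expected normal form $M=L\oplus Lw$ directly as an algebra. You then identify the presented algebra with it using the universal property and the spanning argument. This is more elementary and self-contained: it needs no monomial order and no presentation of $L$.

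The price is the associativity check, and your reduction there is stated too quickly. Left $L$-linearity only strips $L$-coefficients from the \emph{first} factor. Triples such as $(w,a,bw)$, $(w,bw,c)$ or $(w,w,bw)$ are therefore not automatically covered by your list, and for a general bilinear product associativity on generators does not imply associativity. These triples do hold: they can be derived from your listed cases together with left linearity, or checked by expanding your product formula.

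A cleaner fix avoids associativity of $M$ altogether. Let $a\in L$ and $w$ act on $M$ by the left-multiplication operators $\ell_a,\ell_w$ given by your formula, and verify
$\ell_w\ell_a=\ell_{\sigma(a)}\ell_w+\ell_{\delta(a)}$ and $\ell_w^2=\ell_u+\ell_v\ell_w$
on the elements $d$ and $dw$ with $d\in L$. Then $M$ is a left $A$-module, and $A\to M$, $\alpha\mapsto\alpha\cdot1$, sends $a+bw$ to $a+bw$; injectivity on the spanning set $L+Lw$ gives the free basis. The first identity uses only \eqref{eq:sigma-delta}. For the second: at $d$ it is exactly \eqref{eq:assoc-wwa-const}--\eqref{eq:assoc-wwa-coef}, at $w$ it is exactly \eqref{eq:assoc-w2-const}--\eqref{eq:assoc-w2-coef}, and at $dw$ it follows by combining all four.
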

	\begin{proof}
		Since $\sigma$ is a unital algebra endomorphism and $\delta$ is a
		$(\sigma,1)$-derivation, the overlap of type $wab$ is already
		resolved. Thus
		associativity reduces to the overlaps $wwa$ and $www$.
		
		For $wwa$, reducing $w(wa)$ gives
		\[
		w(\sigma(a)w+\delta(a))
		=\sigma^2(a)w^2+\delta(\sigma(a))w+\sigma(\delta(a))w+\delta^2(a).
		\]
		Using $w^2=u+vw$, this becomes
		\[
		\sigma^2(a)u+\delta^2(a)
		+\bigl(\sigma^2(a)v+\delta(\sigma(a))+\sigma(\delta(a))\bigr)w.
		\]
		On the other hand,
		\[
		(w^2)a=(u+vw)a=ua+v(\sigma(a)w+\delta(a))
		=ua+v\delta(a)+v\sigma(a)w.
		\]
		Comparing constant terms and coefficients of $w$ yields
		\eqref{eq:assoc-wwa-const} and \eqref{eq:assoc-wwa-coef}.
		
		Similarly, from $w(w^2)=w(u+vw)$ we obtain
		\[
		\delta(u)+\sigma(v)u
		+\bigl(\sigma(u)+\sigma(v)v+\delta(v)\bigr)w,
		\]
		while
		\[
		(w^2)w=(u+vw)w
		=uw+vw^2
		=vu+(u+v^2)w.
		\]
		Comparing the two expressions gives
		\eqref{eq:assoc-w2-const} and \eqref{eq:assoc-w2-coef}.
		
		Conversely, assume that the four identities
		\eqref{eq:assoc-wwa-const}--\eqref{eq:assoc-w2-coef} hold. Choose a
		basis of $L$ containing $1$, and present $L$ by the multiplication
		table of the remaining basis elements. Assign weight $1$ to each base
		letter and weight $2$ to $w$, and order monomials first by weighted
		degree and then lexicographically, with $w$ larger than every base
		letter. The reduction rules
		\[
		wa\mapsto\sigma(a)w+\delta(a),\qquad
		w^2\mapsto u+vw
		\]
		strictly decrease this order. Overlaps coming from the multiplication
		table of $L$ are resolved by associativity of $L$. The only remaining
		overlaps are of the forms $wab$, $wwa$, and $www$. The first is
		resolved by \eqref{eq:sigma-delta} and the
		others by the identities \eqref{eq:assoc-wwa-const}--
		\eqref{eq:assoc-w2-coef}. Therefore the Diamond Lemma \cite{B}
		applies and yields the free left $L$-basis $\{1,w\}$.
	\end{proof}

	For data satisfying Proposition~\ref{pro:associative-data}, let $A$
	be the associative algebra with free left $L$-basis $\{1,w\}$ and
	relations
	\[
	wa=\sigma(a)w+\delta(a)\quad(a\in L),\qquad w^2=u+vw.
	\]
	Set
	\[
	W=w\otimes g+h\otimes w+\omega\in A\otimes A.
	\]
	We now determine when the comultiplication of $L$ extends to an
	algebra map $\Delta_A:A\to A\otimes A$ with $\Delta_A(w)=W$.
	Preservation of the Ore relation by $\Delta_A$ is equivalent to
	\begin{equation}\label{eq:comp-full}
		\Delta(\sigma(a))W+\Delta(\delta(a))=W\Delta(a)
		\qquad(a\in L).
	\end{equation}
	Comparing coefficients in the free left $L\otimes L$-basis
	$\{1\otimes1,w\otimes1,1\otimes w,w\otimes w\}$ gives
	\begin{align}
		\Delta(\sigma(a))(1\otimes g)
		&=\sum\sigma(a_{(1)})\otimes ga_{(2)},\label{eq:comp1}\\
		\Delta(\sigma(a))(h\otimes1)
		&=\sum ha_{(1)}\otimes\sigma(a_{(2)}),\label{eq:comp2}\\
		\Delta(\sigma(a))\omega+\Delta(\delta(a))
		&=\omega\Delta(a)+\sum\delta(a_{(1)})\otimes ga_{(2)}
		+\sum ha_{(1)}\otimes\delta(a_{(2)}).\label{eq:comp3}
	\end{align}
	Preservation of the square relation by $\Delta_A$ is equivalent to
	the identity
	\begin{equation}\label{eq:comp4}
		W^2=\Delta(u)+\Delta(v)W
	\end{equation}
	in $A\otimes A$.
	
	\begin{defn}\label{def:extension-data}
		For $g,h\in G(L)$, let $\mathcal D(L;g,h)$ be the set of tuples
		$\Theta=(g,h,\omega,\sigma,\delta,u,v)$ satisfying the following conditions:
		\begin{enumerate}
			\item $\omega\in Z^2_{g,h}(L)\cap(L^+\otimes L^+)$;
			\item $\sigma$ is a unital algebra endomorphism, $\delta$ is a
			$(\sigma,1)$-derivation, and $\epsilon\delta=0$;
			\item $u\in L^+$, $v\in L$, and the four identities of
			Proposition~\ref{pro:associative-data} hold;
			\item Equations \eqref{eq:comp1}--\eqref{eq:comp3} hold for every
			$a\in L$, and \eqref{eq:comp4} holds in the algebra $A\otimes A$
			constructed from these data.
		\end{enumerate}
	\end{defn}
	
	\begin{pro}\label{pro:reconstruction}
		Every $\Theta\in\mathcal D(L;g,h)$ determines a normalized Hopf
		extension $H_\Theta$ of dimension $2\dim L$, with coradical $L_0$.
		Conversely, every normalized extension determines a tuple in
		$\mathcal D(L;g,h)$.
	\end{pro}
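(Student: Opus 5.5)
We construct $H_\Theta$ in stages. By conditions (2)--(3) of Definition~\ref{def:extension-data} and Proposition~\ref{pro:associative-data}, the algebra $A$ presented by \eqref{eq:ore}--\eqref{eq:power} has free left $L$-basis $\{1,w\}$, so $\dim A=2\dim L$ and $L\subseteq A$ is a subalgebra.

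\textbf{Coproduct.} Define $\Delta_A$ on the free algebra generated by $L$ and $w$ by $\Delta_L$ on $L$ and $w\mapsto W$. By \eqref{eq:comp1}--\eqref{eq:comp3}, which are the coefficientwise form of \eqref{eq:comp-full}, this map kills the Ore relations. By \eqref{eq:comp4} it kills the square relation, so it descends to an algebra map $A\to A\otimes A$.

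\textbf{Counit.} Set $\epsilon_A(w)=0$ and $\epsilon_A|_L=\epsilon_L$. This is compatible with the relations because $\epsilon\delta=0$ and $\epsilon(u)=0$.

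\textbf{Coalgebra axioms.} Coassociativity and counitality need only be checked on the generators, since both sides are algebra maps. On $L$ they hold already. On $w$, coassociativity is exactly the cocycle condition \eqref{eq:cocycle-cond} of condition (1) after expanding $(\Delta\otimes\id)W$ and $(\id\otimes\Delta)W$. Counitality on $w$ follows from $\omega\in L^+\otimes L^+$. Thus $A$ is a bialgebra containing $L$ as a sub-bialgebra.

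\textbf{Antipode.} I would avoid writing the antipode explicitly. A finite-dimensional bialgebra is a Hopf algebra as soon as it is pointed, or more generally once its coradical is generated by invertible group-likes. So the next step is to show that the coradical of $A$ is $L_0$.

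\textbf{Coradical.} Let $C=A$ and $D=L$. Since $\Delta(w)-w\otimes g-h\otimes w\in L\otimes L$, we get $\Delta(A)\subseteq A\otimes L+L\otimes A$, because $L$ is a subcoalgebra and $\Delta(aw)=\Delta(a)W$. So $A/L$ is a coalgebra quotient on which $\bar w$ is $(g,h)$-skew-primitive modulo $L$. Concretely, $A$ is filtered by $F_0=L\subseteq F_1=A$, with $\Delta(F_1)\subseteq F_1\otimes F_0+F_0\otimes F_1$. A standard fact (Taft--Wilson, or \cite[Sweedler]{R11} style) says that the coradical of a coalgebra filtered this way lies in $F_0$. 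Therefore $A_0\subseteq L$, hence $A_0=L_0=\K[\G(L)]$ and $A$ is pointed.

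The group-likes are invertible, so $A$ has an antipode: the set of bialgebra endomorphisms is a finite-dimensional monoid under convolution, and $\id$ is invertible because its restriction to the coradical is. One can also argue via Takeuchi: a bialgebra whose coradical lies in a Hopf subalgebra is Hopf. The embedding $L\hookrightarrow A$ is then a Hopf embedding, since antipodes are unique and restrict. Since $w\notin L$, $A$ is generated by $L$ and $w$, so $H_\Theta:=A$ is a simple extension, normalized with support $(g,h)$.

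\textbf{Converse.} Given a normalized extension, \eqref{eq:ore} and \eqref{eq:power} define $\sigma,\delta,u,v$ uniquely. They satisfy \eqref{eq:sigma-delta} as noted, and Proposition~\ref{pro:cocycle} gives condition (1) together with $\epsilon\delta=0$ and $u\in L^+$. Associativity in $H$ and uniqueness of the basis expansion give the four identities of Proposition~\ref{pro:associative-data}, by the forward computation in its proof. Since $\Delta_H$ is an algebra map and $H\otimes H$ has free left $L\otimes L$-basis $\{1\otimes1,w\otimes1,1\otimes w,w\otimes w\}$, applying $\Delta_H$ to the relations yields \eqref{eq:comp1}--\eqref{eq:comp4}. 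Finally, $H\cong A$ as algebras, so \eqref{eq:comp4} holds in $A\otimes A$.

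\textbf{Main obstacle.} The delicate step is the coradical and antipode argument: ensuring $A_0\subseteq L$, and hence the existence of an antipode, without computing $S(w)$. The fallback is to compute it directly, $S(w)=-h^{-1}\bigl(w+\sum\omega^{(1)}S(\omega^{(2)})\bigr)g^{-1}$ suitably arranged, and then check that it is an anti-algebra map. That route is more laborious, so I would use the filtration argument first.
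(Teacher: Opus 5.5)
Your proposal is correct and follows the paper's proof essentially step for step. The shared steps are: the algebra $A$ from Proposition~\ref{pro:associative-data}; $\Delta_A$ and $\epsilon_A$ obtained from \eqref{eq:comp1}--\eqref{eq:comp4}, $\epsilon\delta=0$ and $\epsilon(u)=0$; coassociativity from the cocycle identity; the filtration $L\subseteq A$ forcing $A_0=L_0$; and the antipode from the Takeuchi-type criterion the paper cites as \cite[Proposition 7.6.3]{R11}.

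Two phrasings need tidying, neither a real gap:
\begin{enumerate}
\item Pointedness alone does not give an antipode; finite monoid bialgebras are counterexamples. What you actually use is that $\G(A)=\G(L)$ consists of invertible elements.
\item The convolution argument should be run in $\End_{\K}(A)$. It does not work on the set of bialgebra endomorphisms, which is not closed under convolution.
\end{enumerate}
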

	\begin{proof}
		Proposition~\ref{pro:associative-data} gives an algebra $A$ with free
		left $L$-basis $\{1,w\}$. Equations \eqref{eq:comp-full} and
		\eqref{eq:comp4} extend $\Delta_L$ to an algebra map
		$\Delta_A:A\to A\otimes A$ with $\Delta_A(w)=W$.
		The cocycle identity gives coassociativity on $w$, and hence on $A$.
		The conditions $\epsilon\delta=0$ and $\epsilon(u)=0$ extend
		$\epsilon_L$ to an algebra map $\epsilon_A$ with $\epsilon_A(w)=0$.
		The normalization of $\omega$ gives both counit identities.
		
		For $a\in L$, the identity $\Delta_A(aw)=\Delta_L(a)W$ implies
		\[
		\Delta_A(A)\subseteq A\otimes L+L\otimes A.
		\]
		Hence $F_0=L$ and $F_n=A$ for $n\geq1$ form an exhaustive coalgebra
		filtration. Thus $A_0\subseteq L$ and $L_0=L\cap A_0$, which gives $A_0=L_0$.
		Since $A_0=\K[\G(L)]$ is a Hopf subalgebra, it follows by \cite[Proposition 7.6.3]{R11} that
		$A$ is a pointed Hopf algebra.
		This is $H_\Theta$.
		
		For the converse, associativity and the free left $L$-basis give
		the identities of Proposition~\ref{pro:associative-data}.
		Proposition~\ref{pro:cocycle} gives normalization and the counit
		conditions, while preservation of the defining relations by
		$\Delta_H$ gives \eqref{eq:comp1}--\eqref{eq:comp4}.
	\end{proof}
	
	\begin{rmk}
		The compatibility equations force $\sigma$ to be an automorphism.
		Indeed, $\chi=\epsilon\sigma:L\to\K$ is a character, and applying
		the counit to \eqref{eq:comp1} and \eqref{eq:comp2} gives
		\[
		\sigma(a)=\sum\chi(a_{(1)})\,ga_{(2)}g^{-1}
		=\sum ha_{(1)}h^{-1}\chi(a_{(2)}).
		\]
		Thus $\sigma$ is a winding automorphism composed with conjugation
		by a group-like element. 
	\end{rmk}
	
	\subsection{Gauge transformations and equivalence of data}
	
	\begin{defn}\label{def:gauge}
		The \emph{gauge group} $\Gau_{g,h}(L)=\Gau(L)$ consists of pairs
		$(\alpha,b)\in\K^\times\times L^+$, with multiplication
		\[
		(\alpha,b)(\alpha',b')=(\alpha\alpha',\alpha b'+b).
		\]
		The pair $(\alpha,b)$ specifies the change of generator
		$w'=\alpha w+b$.
	\end{defn}
	
	In the generator $w'$, the data are $T_{\alpha,b}\Theta$, where
	\begin{align*}
		\omega'&=\alpha\omega+d^1_{g,h}(b),&\sigma'&=\sigma,\\
		\delta'(a)&=\alpha\delta(a)+ba-\sigma(a)b,\\
		v'&=\alpha v+\sigma(b)+b,\\
		u'&=\alpha^2u+\alpha\delta(b)+b^2-v'b.
	\end{align*}
	The support is $(g,h)$. For $\psi\in\Aut_{\mathrm{Hopf}}(L)$, transport
	of data is defined by
	\[
	P_\psi\Theta=
	\bigl(\psi(g),\psi(h),(\psi\otimes\psi)\omega,
	\psi\sigma\psi^{-1},\psi\delta\psi^{-1},\psi(u),\psi(v)\bigr).
	\]
	Write
	\[
	B_{g,h}=\Aut_{\mathrm{Hopf}}(L)_{g,h}
	=\{\psi\in\Aut_{\mathrm{Hopf}}(L):\psi(g)=g,\ \psi(h)=h\}.
	\]
	
	\begin{lem}\label{lem:elementary-iso}
		The maps $T_{\alpha,b}$ and $P_\psi$ preserve the data sets with their
		respective supports and satisfy
		\[
		T_{\alpha,b}T_{\alpha',b'}=T_{\alpha\alpha',\alpha b'+b},
		\qquad
		P_\psi T_{\alpha,b}P_\psi^{-1}=T_{\alpha,\psi(b)}.
		\]
		Each map induces a base-preserving Hopf algebra isomorphism between
		the corresponding reconstructed extensions.
	\end{lem}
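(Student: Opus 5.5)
The cleanest route avoids checking the equations of Definition~\ref{def:extension-data} for $T_{\alpha,b}\Theta$ and $P_\psi\Theta$ by hand. Instead, we realize each transformed tuple inside an already existing Hopf algebra and then appeal to the converse half of Proposition~\ref{pro:reconstruction}.

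\emph{Gauge transformations.} Let $\Theta\in\mathcal D(L;g,h)$ and let $H=H_\Theta$ be the extension given by Proposition~\ref{pro:reconstruction}. Put $w'=\alpha w+b$ with $\alpha\in\K^\times$ and $b\in L^+$.

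First, $\{1,w'\}$ is again a free left $L$-basis, because the change-of-basis matrix $\begin{pmatrix}1&b\\0&\alpha\end{pmatrix}$ is invertible over $L$. Also $\epsilon(w')=\epsilon(b)=0$.

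Next we compute the structure constants in the basis $\{1,w'\}$:
\begin{align*}
\Delta(w')&=w'\otimes g+h\otimes w'+\alpha\omega+d^1_{g,h}(b),\\
w'a&=\sigma(a)w'+\alpha\delta(a)+ba-\sigma(a)b.
\end{align*}
For the square we use $w=\alpha^{-1}(w'-b)$ to expand $w'^2$ in the basis $\{1,w'\}$. This yields exactly the stated formulas for $v'$ and $u'$. This expansion is the only place where care is needed: the term $wb=\sigma(b)w+\delta(b)$ must be rewritten, and the coefficients collected.

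Since $H$ is a normalized quadratic extension with generator $w'$, the converse part of Proposition~\ref{pro:reconstruction} shows that $T_{\alpha,b}\Theta\in\mathcal D(L;g,h)$. Moreover, $H_{T_{\alpha,b}\Theta}$ is defined by the same presentation with $w'$ in place of $w$. Hence the map sending $w\mapsto \alpha w+b$ and fixing $L$ pointwise is a base-preserving Hopf isomorphism $H_{T_{\alpha,b}\Theta}\to H_\Theta$, and it fixes $g$ and $h$.

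\emph{Transport by $\psi$.} Let $\psi\in\Aut_{\mathrm{Hopf}}(L)$. We define a new Hopf algebra $H'$ by transporting the structure of $H$ along the linear bijection
\[
\Phi:H\to H',\qquad a+a'w\mapsto\psi(a)+\psi(a')w.
\]
Then $H'$ contains $L$ through $\psi$, and its structure constants are precisely those of $P_\psi\Theta$. For instance, $w\,\psi(a)=\psi\sigma\psi^{-1}(\psi(a))\,w+\psi\delta\psi^{-1}(\psi(a))$. The coproduct of $w$ in $H'$ is $w\otimes\psi(g)+\psi(h)\otimes w+(\psi\otimes\psi)\omega$. Applying Proposition~\ref{pro:reconstruction} again shows that $P_\psi\Theta\in\mathcal D(L;\psi(g),\psi(h))$, and $\Phi$ is a base-preserving Hopf isomorphism $H_\Theta\to H_{P_\psi\Theta}$.

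\emph{Composition laws.} Composing the two changes of generator gives
\[
\alpha(\alpha'w+b')+b=\alpha\alpha' w+(\alpha b'+b).
\]
Because the data attached to a generator are uniquely determined by the free basis, this yields $T_{\alpha,b}T_{\alpha',b'}=T_{\alpha\alpha',\alpha b'+b}$, matching the product in $\Gau(L)$.

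For the conjugation identity, $\Phi$ sends the generator $\alpha w+b$ of $H_\Theta$ to $\alpha w+\psi(b)$ in $H_{P_\psi\Theta}$. Reading off the data of these corresponding generators on both sides gives $P_\psi T_{\alpha,b}=T_{\alpha,\psi(b)}P_\psi$, which is the stated identity $P_\psi T_{\alpha,b}P_\psi^{-1}=T_{\alpha,\psi(b)}$.

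The only routine calculation in the whole argument is the expansion of $w'^2$; everything else follows from uniqueness of coefficients in the free basis $\{1,w'\}$.
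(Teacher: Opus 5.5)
Your proposal is correct and follows essentially the same route as the paper. You realize $\alpha w+b$, and likewise the transported generator, inside an already reconstructed extension, read off the data by substitution, and invoke the converse half of Proposition~\ref{pro:reconstruction} to get both membership in the data set and the base-preserving Hopf isomorphism. The only minor difference is that you derive the two composition identities from uniqueness of the data attached to a generator, whereas the paper checks them by direct substitution in the transformation formulas, which also shows they hold formally for arbitrary tuples; both arguments are valid on $\mathcal D(L;g,h)$.
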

	\begin{proof}
		The elements $1,\alpha w+b$ form a free left $L$-basis of $H_\Theta$.
		Substitution gives the displayed formulas for the transformed data,
		so Proposition~\ref{pro:reconstruction} shows that these data belong
		to $\mathcal D(L;g,h)$. The map
		$H_{T_{\alpha,b}\Theta}\to H_\Theta$ fixing $L$ and sending its
		generator to $\alpha w+b$ is a Hopf algebra isomorphism.
		Transport gives a Hopf algebra isomorphism
		$H_\Theta\to H_{P_\psi\Theta}$ restricting to $\psi$ on $L$ and
		sending $w$ to the target generator. Direct substitution in the
		transformation formulas gives the two composition identities.
	\end{proof}
	
	\begin{defn}\label{def:equivalence}
		Let $\Gamma_{g,h}$ be the semidirect product with elements
		$(\alpha,b,\psi)\in\K^\times\times L^+\times B_{g,h}$ and multiplication
		\[
		(\alpha,b,\psi)(\alpha',b',\psi')
		=(\alpha\alpha',b+\alpha\psi(b'),\psi\psi').
		\]
		Its action on $\mathcal D(L;g,h)$ is
		\[
		(\alpha,b,\psi)\cdot\Theta=T_{\alpha,b}P_\psi\Theta.
		\]
		Two tuples are \emph{equivalent} if they belong to the same orbit.
	\end{defn}
	
	\subsection{Orbit classification theorem}
	
	\begin{lem}\label{lem:generator-rigidity}
		Let $H$ and $H'$ be normalized extensions. Denote their generators
		by $w$ and $w'$ and their supports by $(g,h)$ and $(g',h')$,
		respectively. If $\Phi:H\to H'$ is a base-preserving
		Hopf algebra isomorphism and $\psi=\Phi|_L$, then
		\[
		\Phi(w)=\alpha k w'+b,
		\qquad \alpha\in\K^\times,\quad b\in L^+,\quad k\in \G(L),
		\]
		where $\psi(g)=kg'$ and $\psi(h)=kh'$.
		In particular, $k=1$ when the supports agree and $\psi$ fixes them.
	\end{lem}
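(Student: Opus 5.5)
Write $\Phi(w)=x+yw'$ with $x,y\in L$, using the free left $L$-basis $\{1,w'\}$ of $H'$. The plan is to compare the two expressions for $\Delta_{H'}(\Phi(w))$ in the free left $L\otimes L$-basis $\{1\otimes1,w'\otimes1,1\otimes w',w'\otimes w'\}$ of $H'\otimes H'$.

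On one side, since $\Phi$ is a coalgebra map,
\[
\Delta(\Phi(w))=\Phi(w)\otimes\psi(g)+\psi(h)\otimes\Phi(w)+(\psi\otimes\psi)(\omega).
\]
On the other side, $\Delta(x)+\Delta(y)\Delta(w')$ with $\Delta(w')=w'\otimes g'+h'\otimes w'+\omega'$. Two observations make the comparison legitimate. Right multiplication by the group-likes $g',h'$ preserves the left $L$-span of $1,w'$, since $w'k=\sigma'(k)w'+\delta'(k)$. Also, $\omega'\in L\otimes L$. So every term can be expanded uniquely in the basis above. For example, $yw'\otimes\psi(g)=y\sigma'(\psi(g))w'\otimes\psi(g)+\dots$, which is harmless.

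\emph{Comparing coefficients.}
\begin{itemize}
\item[(a)] The $w'\otimes w'$ coefficient vanishes on both sides; this gives nothing new.
\item[(b)] The $w'\otimes1$ coefficient: the left side contributes $y\sigma'(\psi(g))\otimes\psi(g)$, more precisely $(y\otimes1)(\sigma'\otimes\id)(\psi(g)\otimes\psi(g))$. The right side contributes $\Delta(y)(1\otimes g')$. Hence
\[
\Delta(y)=y\,\sigma'(\psi(g))\otimes\psi(g)g'^{-1}.
\]
\item[(c)] The $1\otimes w'$ coefficient similarly gives
\[
\Delta(y)=\psi(h)h'^{-1}\otimes y'',
\]
where $y''$ is $y$ with $\sigma'$-factors absorbed. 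Here we use that $\psi(h)\otimes yw'$ has $w'$ only on the right.
\end{itemize}
From (b), $\Delta(y)\in L\otimes\K k$ with $k=\psi(g)g'^{-1}$. Applying $\epsilon\otimes\id$ gives $y=\epsilon(y')k$, so $y=\beta k$ for a scalar $\beta$. Comparing with (c) gives $k=\psi(h)h'^{-1}$. I would actually read off $y$ directly from $(\epsilon\otimes\id)$ applied to the $1\otimes w'$ coefficient, which avoids needing $\sigma'$ explicitly: this yields $y=\epsilon(\psi(h)h'^{-1})\cdot y$-type identities, which force $y\in\K\,\G(L)$.

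\emph{Nonvanishing and the constant term.} We need $\beta\neq0$; otherwise $\Phi(w)=x\in L$, so $\Phi(H)\subseteq L$, contradicting bijectivity since $\dim H'=2\dim L$. Write $\beta=\alpha$ and $b=x$. Applying $\epsilon$ and using $\epsilon(w)=\epsilon(w')=0$ with $\epsilon\Phi=\epsilon$ gives $\epsilon(x)=0$, so $b\in L^+$. Thus $\Phi(w)=\alpha kw'+b$ with $\psi(g)=kg'$ and $\psi(h)=kh'$.

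\emph{Equal supports.} When the supports agree and $\psi$ fixes them, $kg=g$ forces $k=1$.

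The main obstacle is the bookkeeping in the $w'\otimes1$ and $1\otimes w'$ coefficients. Terms like $yw'\otimes\psi(g)$ must be rewritten through $\sigma'$ before they lie in the left $L\otimes L$-span, so one has to check that the rewriting does not mix the coefficients. The safest route is to first apply $\id\otimes\epsilon$ and $\epsilon\otimes\id$ to the full coproduct identity, since these are algebra maps $H'\otimes H'\to H'$. Combined with uniqueness of left $L$-expansions, they extract $y$ and its coproduct without ever commuting $w'$ past group-likes.
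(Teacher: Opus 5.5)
Your overall plan matches the paper's proof. You write $\Phi(w)=x+yw'$ and compare the $w'\otimes1$ and $1\otimes w'$ coefficients of $\Delta(\Phi(w))$ in the free left $L\otimes L$-basis $\{1\otimes1,w'\otimes1,1\otimes w',w'\otimes w'\}$. You get $y\neq0$ from $\Phi(L)=L$ and $\dim H'=2\dim L$, and $\epsilon(x)=0$ from the counit. Those last steps are fine.

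The problem is the coefficient comparison itself, which is the only real content of the proof. You miscompute it, and the obstacle you worry about does not exist. Elements in different tensor factors commute in $H'\otimes H'$, so
\[
yw'\otimes\psi(g)=(y\otimes\psi(g))(w'\otimes1),\qquad
\psi(h)\otimes yw'=(\psi(h)\otimes y)(1\otimes w').
\]
Likewise $\Delta(y)(w'\otimes g')=\Delta(y)(1\otimes g')(w'\otimes1)$ and $\Delta(y)(h'\otimes w')=\Delta(y)(h'\otimes1)(1\otimes w')$. No $w'$ ever passes a group-like, and $\sigma'$ never enters. The two comparisons are therefore
\[
\Delta(y)(1\otimes g')=y\otimes\psi(g),\qquad
\Delta(y)(h'\otimes1)=\psi(h)\otimes y.
\]
That is, $\Delta(y)=y\otimes k$ with $k=\psi(g)(g')^{-1}$, and $\Delta(y)=\psi(h)(h')^{-1}\otimes y$. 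Your version of (b), $\Delta(y)=y\,\sigma'(\psi(g))\otimes k$, is not just imprecise but false in general. Combined with $y=\beta k$ and $\beta\neq0$, it would force $\sigma'(\psi(g))=1$. This fails whenever $g\neq1$, since $\sigma'$ is an automorphism.

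The fallbacks you propose do not repair this. Applying $\id\otimes\epsilon$ or $\epsilon\otimes\id$ to the full identity for $\Delta(\Phi(w))$ returns only $\Phi(w)=\Phi(w)$, by the counit axiom, so it cannot extract $y$ or $\Delta(y)$. Likewise, $\epsilon\otimes\id$ applied to $\Delta(y)=\psi(h)(h')^{-1}\otimes y$ gives the tautology $y=y$ and forces nothing. What works is $\epsilon\otimes\id$ on the first equation, giving $y=\epsilon(y)k$, or $\id\otimes\epsilon$ on the second.

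Finally, reading off $\psi(h)=kh'$ from the second equation requires $\epsilon(y)\neq0$. So the non-vanishing argument has to come before that step, not after. With these corrections your argument is exactly the paper's proof.
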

	\begin{proof}
		Write $\Phi(w)=aw'+b$ with $a,b\in L$. Comparing the
		$w'\otimes1$ and $1\otimes w'$ coefficients in the coproduct gives
		\[
		\Delta(a)(1\otimes g')=a\otimes\psi(g),\qquad
		\Delta(a)(h'\otimes1)=\psi(h)\otimes a.
		\]
		Put $k=\psi(g)(g')^{-1}\in \G(L)$. The first equality gives
		$\Delta(a)=a\otimes k$, hence $a=\epsilon(a)k$.
		Surjectivity of $\Phi$ gives $a\ne0$, so $\alpha=\epsilon(a)\ne0$.
		Substitution in the second equality gives $kh'=\psi(h)$.
		Applying the counit to $\Phi(w)$ gives $\epsilon(b)=0$.
	\end{proof}
	
	\begin{thm}\label{thm:orbit}
		Fix $g,h\in \G(L)$. Reconstruction induces a bijection between
		$\mathcal D(L;g,h)/\Gamma_{g,h}$ and the isomorphism classes of
		normalized extensions with specified support $(g,h)$, with
		isomorphisms as in Definition~\ref{def:normalized}.
	\end{thm}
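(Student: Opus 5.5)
We will construct the map, check it is well defined and surjective, and then prove injectivity using Lemma~\ref{lem:generator-rigidity}.

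\emph{Well definedness and surjectivity.} Define $\mathcal D(L;g,h)\to\{\text{extensions}\}/\cong$ by $\Theta\mapsto[H_\Theta]$, which uses Proposition~\ref{pro:reconstruction}. By Lemma~\ref{lem:elementary-iso}, for $(\alpha,b,\psi)\in\Gamma_{g,h}$ the maps $T_{\alpha,b}$ and $P_\psi$ induce base-preserving Hopf isomorphisms $H_\Theta\cong H_{T_{\alpha,b}P_\psi\Theta}$. Their restriction to $L$ is $\psi$ or the identity, and both fix $g$ and $h$. So these are isomorphisms in the sense of Definition~\ref{def:normalized}, and the map descends to orbits. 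The composition identities in Lemma~\ref{lem:elementary-iso} show that the formula in Definition~\ref{def:equivalence} really is a group action. For surjectivity, let $H$ be a normalized extension with generator $w$. It determines a tuple $\Theta$ by the converse part of Proposition~\ref{pro:reconstruction}. The algebra map $H_\Theta\to H$ that is the identity on $L$ and sends $w$ to $w$ respects the defining relations and coproducts. It sends the free left $L$-basis $\{1,w\}$ to a free basis, so it is a bijective Hopf map.

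\emph{Injectivity.} Suppose $\Phi:H_\Theta\to H_{\Theta'}$ is an isomorphism with specified support, and put $\psi=\Phi|_L\in B_{g,h}$. By Lemma~\ref{lem:generator-rigidity} with $k=1$, we have $\Phi(w)=\alpha w'+b$ with $\alpha\in\K^\times$ and $b\in L^+$.
\begin{itemize}
\item[(i)] Factor $\Phi$ as the transport isomorphism $H_\Theta\to H_{P_\psi\Theta}$ followed by a base-preserving isomorphism $\Phi_1:H_{P_\psi\Theta}\to H_{\Theta'}$. The map $\Phi_1$ is the identity on $L$ and sends the generator to $\alpha w'+b$.
\item[(ii)] Since $\{1,\alpha w'+b\}$ is a free left $L$-basis of $H_{\Theta'}$, the data of $H_{\Theta'}$ computed in the generator $\alpha w'+b$ are $T_{\alpha,b}\Theta'$.
\item[(iii)] Because $\Phi_1$ is an $L$-linear Hopf algebra isomorphism matching generators, and the data are uniquely determined by the basis expansion, we get $P_\psi\Theta=T_{\alpha,b}\Theta'$.
\end{itemize}
Hence $\Theta'=T_{\alpha,b}^{-1}P_\psi\Theta$, which lies in the $\Gamma_{g,h}$-orbit of $\Theta$. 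The inverse $T_{\alpha,b}^{-1}=T_{\alpha^{-1},-\alpha^{-1}b}$ comes from the composition law.

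\emph{Main obstacle.} The delicate point is step (iii): the data attached to a generator must be intrinsic. Concretely, we must check that $\sigma,\delta,u,v,\omega$ are determined by the pair $(H,w)$ through uniqueness of coefficients in the free basis $\{1,w\}$ (and in the free $L\otimes L$-basis for $\omega$). We must also check that transporting along $\Phi_1$ carries these coefficients exactly, with $\Phi_1$ fixing $L$. A secondary check is that the transformation formulas listed for $T_{\alpha,b}$ are the correct substitution results.
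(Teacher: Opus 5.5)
Your proposal is correct and follows essentially the paper's argument: orbit-invariance comes from Lemma~\ref{lem:elementary-iso}, and injectivity comes from Lemma~\ref{lem:generator-rigidity}, which gives $\Phi(w)=\alpha w'+b$ and hence $P_\psi\Theta=T_{\alpha,b}\Theta'$. The differences are only organizational: you check surjectivity ($H_\Theta\cong H$) explicitly, and your injectivity argument applied to the identity map covers the paper's separate remark that two normalized generators of the same extension give gauge-equivalent tuples.
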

	\begin{proof}
		Proposition~\ref{pro:reconstruction} assigns an extension to each
		tuple and a tuple to each normalized extension. For two normalized
		generators of the same extension with the same support,
		Lemma~\ref{lem:generator-rigidity}, applied to the identity map,
		gives a change $w\mapsto\alpha w+b$, with $b\in L^+$.
		Their tuples therefore belong to the same gauge orbit.
		
		Every element of $\Gamma_{g,h}$ induces an isomorphism preserving
		the specified support by Lemma~\ref{lem:elementary-iso}.
		Conversely, let $\Phi:H_\Theta\to H_{\Theta'}$ be such an
		isomorphism, and put $\psi=\Phi|_L\in B_{g,h}$.
		Lemma~\ref{lem:generator-rigidity} gives
		$\Phi(w)=\alpha w'+b$. Transport of the source data by $\psi$
		gives the data of this generator in the target, so
		\[
		P_\psi\Theta=T_{\alpha,b}\Theta'.
		\]
		Thus $\Theta$ and $\Theta'$ belong to the same $\Gamma_{g,h}$-orbit.
	\end{proof}
	
	\begin{rmk}\label{rmk:unmarked-supports}
		\normalfont
		For extensions with varying support, set
		\[
		\mathcal D(L)=\bigsqcup_{g,h\in G(L)}\mathcal D(L;g,h).
		\]
		Base-preserving isomorphism classes correspond to the equivalence
		relation generated by gauge transformations, all transports $P_\psi$,
		and changes $w\mapsto kw$ with $k\in \G(L)$.
		The last change sends $(g,h)$ to $(kg,kh)$ and gives
		\begin{align*}
			\omega'&=(k\otimes k)\omega,&
			\sigma'(a)&=k\sigma(a)k^{-1},&\delta'(a)&=k\delta(a),\\
			u'&=k\sigma(k)u,&
			v'&=\bigl(k\sigma(k)v+k\delta(k)\bigr)k^{-1}.
		\end{align*}
		These changes preserve normalization, and
		Lemma~\ref{lem:generator-rigidity} supplies every base-preserving
		Hopf algebra isomorphism in this way.
	\end{rmk}

	\subsection{Classification of quadratic extensions with the same coradical}
	\begin{lem}\label{lem:adapted-rank-two-basis}
		Let $L\subseteq H$ be finite-dimensional pointed Hopf algebras such that
		$\G(H)=\G(L)$ and $\dim H=2\dim L$. Then there exist
		$x\in H\setminus L$ and $g\in\G(L)$ such that
		\(
		H=L\oplus Lx
		\)
		and
		\(
		\Delta(x)=x\otimes g+1\otimes x+\omega
		\)
		for some $\omega\in L\otimes L$.
	\end{lem}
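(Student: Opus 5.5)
The plan is to locate the new element at the first level of the coradical filtration where $H$ leaves $L$, and then to use the quotient coalgebra $H/L^+H$ from the remark after Definition~\ref{def:sge} to check that this element gives a complement to $L$.

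\emph{Step 1: choose the level.} Since $\G(H)=\G(L)$, we have $H_0=\K[\G(L)]=L_0\subseteq L$. Because $\dim H=2\dim L$, $H\neq L$. Hence there is a least $n\ge1$ with $H_n\not\subseteq L$, and then $H_{n-1}\subseteq L$.

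\emph{Step 2: choose a homogeneous element and normalize it.} The space $H_n$ is an $H_0$-bicomodule, and $L\cap H_n$ is a sub-bicomodule. So $H_n$ decomposes as $\bigoplus_{a,b}{}^{a}(H_n)^{b}$, and $L\cap H_n$ decomposes compatibly. Pick $y\in{}^{h}(H_n)^{g'}\setminus L$. By the Taft--Wilson theorem we may take
\[
\Delta(y)=y\otimes g'+h\otimes y+\omega',\qquad \omega'\in H_{n-1}\otimes H_{n-1}\subseteq L\otimes L .
\]
Put $x=h^{-1}y$ and $g=h^{-1}g'\in\G(L)$. Then
\[
\Delta(x)=x\otimes g+1\otimes x+\omega,\qquad \omega=(h^{-1}\otimes h^{-1})\omega'\in L\otimes L,
\]
and $x\notin L$ because $h^{-1}\in L$ is invertible in $L$.

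\emph{Step 3 (the main obstacle): show $H=L\oplus Lx$.} Let $Q=H/L^+H$, a quotient coalgebra, with projection $\pi$. For finite-dimensional $H$, the Nichols--Zoeller/Takeuchi correspondence gives $H^{\mathrm{co}\,Q}=L$ and an isomorphism $H\cong L\otimes Q$ of left $L$-modules. Therefore $\dim Q=2$, and any $x$ with $\pi(1),\pi(x)$ linearly independent yields $H=L\oplus Lx$. Suppose instead $\pi(x)=c\,\pi(1)$. On $L$ the map $\pi$ equals $\epsilon(\cdot)\pi(1)$, so $\pi(g)=\pi(1)$. Moreover, the counit identity $(\id\otimes\epsilon)\Delta(x)=x$ forces $(\id\otimes\epsilon)(\omega)=-\epsilon(x)$. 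This gives
\[
(\id\otimes\pi)\Delta(x)=\bigl(x+c-\epsilon(x)\bigr)\otimes\pi(1).
\]
Applying the counit of $Q$ to $\pi(x)=c\,\pi(1)$ gives $c=\epsilon(x)$. Hence $x\in H^{\mathrm{co}\,Q}=L$, which is a contradiction.

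The delicate part is precisely Step 3: invoking the correct form of the freeness statement, namely $H^{\mathrm{co}\,Q}=L$ together with the identification $H\cong L\otimes Q$. If that identification is hard to cite cleanly, the fallback is to argue that $K=L+Lx$ is a subcoalgebra and a left $L$-submodule. This holds because $\Delta(ax)=\Delta(a)\bigl(x\otimes g+1\otimes x+\omega\bigr)\in K\otimes K$. One would then apply freeness of relative Hopf modules to get $\dim K\in\{\dim L,2\dim L\}$, and $\dim K=\dim L$ is impossible since $x\notin L$.
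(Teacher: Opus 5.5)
\paragraph{The gap in Step~3.}
You assert that any $x$ with $\pi(1),\pi(x)$ linearly independent gives $H=L\oplus Lx$. This does not follow from $H\cong L\otimes Q$ as left $L$-modules, and it is false in general. Here $Q=\K\otimes_L H$, and $L^+$ need not lie in the Jacobson radical of $L$, so no Nakayama argument is available.

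For a concrete counterexample, take Sweedler's algebra with $\Char\K\neq2$: $g^2=1$, $t^2=0$, $tg=-gt$, $\Delta(t)=t\otimes1+g\otimes t$, with $L=\K[\langle g\rangle]$. This pair satisfies every hypothesis of the lemma, and $Q$ has basis $\pi(1),\pi(t)$. Put $t'=\tfrac12(1+g)t$. Then $t'-t=\tfrac12(g-1)t\in L^+H$, so $\pi(t')=\pi(t)$ is independent of $\pi(1)$. But $gt'=t'$, so $L+Lt'$ is only $3$-dimensional.

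So your coinvariant computation, although correct, only proves $\pi(x)\notin\K\pi(1)$ and does not finish the proof. What actually closes the argument is the special shape of $\Delta(x)$, namely that $K=L+Lx$ is a subcoalgebra and a left $L$-submodule. Your fallback is therefore the proof itself, not an optional alternative.

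Once you use the fallback, the detour through $Q$ and $H^{\mathrm{co}\,Q}=L$ can be dropped, because $x\notin L$ already rules out rank one. You can also avoid Nichols--Zoeller for relative Hopf modules:
\begin{itemize}
\item Since $\Delta(K)\subseteq L\otimes K+K\otimes L$, the quotient $K/L$ is a nonzero left $L$-Hopf module.
\item By the fundamental theorem of Hopf modules it is free, so $\dim K\ge2\dim L$ and hence $K=H$.
\item You should then state the dimension count explicitly. From $\dim(L+Lx)=2\dim L$ and $\dim Lx\le\dim L$ it follows that $L\cap Lx=0$ and that $a\mapsto ax$ is injective on $L$. Only this gives $H=L\oplus Lx$ with $\{1,x\}$ a free basis.
\end{itemize}

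\paragraph{Comparison with the paper.}
With this repair your argument is correct, but it runs in the opposite order from the paper's.

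The paper first proves $H=L\wedge L$ using the relative filtration $C^{(0)}=L$, $C^{(n+1)}=L\wedge C^{(n)}$. Each quotient $C^{(n+1)}/C^{(n)}$ is an $L$-Hopf module and hence free, so $\dim H=2\dim L$ forces $C^{(1)}=H$. It then takes $x$ to be a lift of a left coinvariant of the rank-one Hopf module $H/L$, and reads off $g$ from the right coaction on the one-dimensional coinvariant space.

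You instead produce $x$ first, via Taft--Wilson at the first coradical level not contained in $L$, and prove generation afterwards.

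What each approach buys:
\begin{itemize}
\item The paper's route needs neither Taft--Wilson nor a bicomodule decomposition, and it exhibits $H=L\wedge L$ directly.
\item Your route shows more: every Taft--Wilson element $y\notin L$ at that minimal level already satisfies $H=L\oplus Ly$.
\item In its repaired form, your route needs only Taft--Wilson plus the fundamental theorem of Hopf modules, rather than Takeuchi's theorem or the relative Nichols--Zoeller theorem.
\end{itemize}
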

	
	\begin{proof}
		Set
		\[
		C^{(0)}=L,\qquad
		C^{(n+1)}=L\wedge C^{(n)}.
		\]
		Since $H$ and $L$ are pointed and $\G(H)=\G(L)$, we have
		$H_0=L_0\subseteq L$. Hence by induction, 
		\(
		H_n\subseteq C^{(n)}
		\)
		for every $n\ge0$. Since the filtration $\{H_n\}$ is
		exhaustive, so is the filtration $\{C^{(n)}\}$.
		
		Each $C^{(n)}$ is a left $L$-submodule of $H$. Moreover,
		$C^{(n+1)}/C^{(n)}$ is a left $L$-Hopf module: the left
		$L$-action is induced by multiplication, and the left coaction is
		induced by the coproduct. Hence, by the fundamental theorem of Hopf
		modules, each nonzero quotient
		$C^{(n+1)}/C^{(n)}$ is a free left $L$-module.
		
		Since $H\ne L$ and the filtration is exhaustive, we must have
		$C^{(1)}=L\wedge L\ne L$; otherwise
		$C^{(n)}=L$ for every $n$. Thus $C^{(1)}/L\ne0$, which is a free left $L$-module. Therefore
		\[
		\dim(C^{(1)}/L)\ge\dim L.
		\]
		On the other hand,
		\[
		C^{(1)}\subseteq H,\qquad \dim H=2\dim L,
		\]
		so equality must hold and consequently $C^{(1)}=H.$ Therefore $	H=L\wedge L$ and $M:=H/L$
		is a free left $L$-module of rank one.
		
		The inclusion $H=L\wedge L$ gives $M$ natural left and right
		$L$-comodule structures
		\[
		\lambda:M\longrightarrow L\otimes M,
		\qquad
		\rho:M\longrightarrow M\otimes L,
		\]
		induced by $\Delta$. Together with the left $L$-action,
		$(M,\lambda)$ is a left $L$-Hopf module. Hence
		\[
		M\cong L\otimes M^{\operatorname{co}L}.
		\]
		Since $M$ has left $L$-rank one,
		$\dim M^{\operatorname{co}L}=1$. Choose
		$0\ne\bar x\in M^{\operatorname{co}L}$. Then
		$\bar x$ is a free left $L$-basis of $M$ and $	\lambda(\bar x)=1\otimes\bar x.$
		The left and right coactions commute, so the one-dimensional space
		$M^{\operatorname{co}L}$ is stable under the right coaction. Hence
		there exists $g\in\G(L)$ such that
		\[
		\rho(\bar x)=\bar x\otimes g.
		\]
		Choose a lift $x\in H$ of $\bar x$. Since $\bar x$ generates
		$H/L$ freely as a left $L$-module,
		\[
		H=L\oplus Lx.
		\]
		The two coaction identities imply
		\[
		\Delta(x)-1\otimes x\in H\otimes L,
		\qquad
		\Delta(x)-x\otimes g\in L\otimes H.
		\]
		Therefore
		\[
		\Delta(x)-x\otimes g-1\otimes x
		\in(H\otimes L)\cap(L\otimes H)
		=L\otimes L.
		\]
		This proves the result.
	\end{proof}

	\begin{thm}[Classification of index-two extensions with the same coradical]
		\label{thm:classification-index-two}
		Let $L$ be a finite-dimensional pointed Hopf algebra over $\K$, and
		let $H$ be a finite-dimensional pointed Hopf algebra containing $L$
		as a Hopf subalgebra. Then the following are equivalent:
		\begin{enumerate}
			\item[(i)] $H$ is a normalized quadratic extension of $L$ in the
			sense of Definition~\ref{def:normalized};
			\item[(ii)] $\dim H=2\dim L$ and $H_0=L_0$.
		\end{enumerate}
		Consequently, the base-preserving isomorphism classes of all such
		algebras $H$ are parameterized by the orbit spaces
		\[
		\bigsqcup_{g,h\in G(L)}\mathcal D(L;g,h)\Big/\sim,
		\]
		where $\sim$ is the equivalence relation generated by the gauge
		transformations, the transports $P_\psi$ for
		$\psi\in\Aut_{\mathrm{Hopf}}(L)$, and the changes $w\mapsto kw$
		for $k\in G(L)$, as in Remark~\ref{rmk:unmarked-supports}.
	\end{thm}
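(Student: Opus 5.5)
For (i)$\Rightarrow$(ii), I would argue directly from the definitions. If $H$ is a normalized quadratic extension, then $\{1,w\}$ is a free left $L$-basis, so $\dim H=2\dim L$. Proposition~\ref{pro:reconstruction} (applied to the tuple extracted from $H$) gives coradical $L_0$. Alternatively, the coproduct formula \eqref{eq:coproduct} shows $\Delta(H)\subseteq H\otimes L+L\otimes H$. Any simple subcoalgebra $D$ of $H$ is one-dimensional, spanned by a grouplike $x=a+a'w$. Comparing the $w\otimes w$ coefficient of $\Delta(x)=x\otimes x$ forces $a'=0$, so $\G(H)=\G(L)$ and $H_0=L_0$.

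For (ii)$\Rightarrow$(i), I would start from Lemma~\ref{lem:adapted-rank-two-basis}. Since $H_0=L_0$ gives $\G(H)=\G(L)$, the lemma produces $x\in H\setminus L$ and $g\in\G(L)$ with $H=L\oplus Lx$ and $\Delta(x)=x\otimes g+1\otimes x+\omega$, so the support is $(g,1)$. Replace $x$ by $w=x-\epsilon(x)1$; this only changes $\omega$ by $-\epsilon(x)\,d^1_{g,1}(1)$, which stays in $L\otimes L$, so $w\in\ker\epsilon_H$ keeps the required coproduct form. Clearly $H$ is generated as an algebra by $L$ and $w$, so it is a simple extension.

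Next I would produce the Ore relation and the square relation. Since $H=L\oplus Lw$ freely, every $wa$ can be written uniquely as $\sigma(a)w+\delta(a)$, and $w^2$ as $vw+u$ with $u,v\in L$. Uniqueness of the expansion is all Definition~\ref{def:normalized} asks for: it requires only the existence of $w$ with free basis $\{1,w\}$ and relations of the forms \eqref{eq:ore} and \eqref{eq:power}, with $\sigma,\delta$ then uniquely determined. Hence (i) holds with support $(g,1)$.

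For the parameterization, I would combine Theorem~\ref{thm:orbit} with Remark~\ref{rmk:unmarked-supports}. Every such $H$ arises from some tuple in $\mathcal D(L;g,h)$ by Proposition~\ref{pro:reconstruction}. A base-preserving isomorphism $\Phi$ restricts to some $\psi\in\Aut_{\mathrm{Hopf}}(L)$, and by Lemma~\ref{lem:generator-rigidity} it sends $w$ to $\alpha kw'+b$. So $\Phi$ factors as a transport $P_\psi$, followed by the group-like change $w\mapsto kw$, followed by a gauge transformation. Conversely, each generator of $\sim$ induces an isomorphism by Lemma~\ref{lem:elementary-iso} and the remark.

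The main subtle point is the first direction's coradical claim. One must check that no new grouplikes appear, and the $w\otimes w$ coefficient argument settles it.
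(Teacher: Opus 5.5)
Your proposal is correct and follows essentially the same route as the paper. Both arguments get (i)$\Rightarrow$(ii) by comparing the $w\otimes w$-coefficient of $\Delta(a+bw)$ for a group-like, and (ii)$\Rightarrow$(i) from Lemma~\ref{lem:adapted-rank-two-basis}, the counit shift $x\mapsto x-\epsilon(x)1$ and the unique left $L$-basis expansion, with the parameterization read off from Theorem~\ref{thm:orbit}, Lemma~\ref{lem:generator-rigidity} and Remark~\ref{rmk:unmarked-supports}. Your extra details, namely the correction $-\epsilon(x)\,d^1_{g,1}(1)$ to $\omega$ and the explicit factorization of a base-preserving isomorphism into transport, group-like change and gauge, are accurate and spell out steps the paper leaves implicit.
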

	\begin{proof}
		We first show that (i) implies (ii). If $H$ is a normalized
		quadratic extension of $L$, then by definition there exists
		$w\in\ker\epsilon_H$ such that $\{1,w\}$ is a free left $L$-basis
		of $H$. Hence
		\[
		H=L\oplus Lw,
		\]
		and therefore $\dim H=2\dim L$.
		
		It remains to prove $H_0=L_0$. Since $L$ is a Hopf subalgebra,
		$L_0\subseteq H_0$. Conversely, let $x\in \G(H)$. Write
		\[
		x=a+bw
		\]
		with $a,b\in L$. Because $x$ is group-like,
		\[
		\Delta(x)=x\otimes x=(a+bw)\otimes(a+bw).
		\]
		On the other hand, using
		\[
		\Delta(w)=w\otimes g+h\otimes w+\omega
		\]
		with $g,h\in \G(L)$ and $\omega\in L\otimes L$, we obtain
		\[
		\Delta(x)=\Delta(a)+\Delta(b)(w\otimes g+h\otimes w+\omega).
		\]
		Since $\Delta(a),\Delta(b)\in L\otimes L$, the right-hand side
		contains no term of the form $w\otimes w$. Comparing the
		$w\otimes w$-component in $(a+bw)\otimes(a+bw)$ gives
		$b\otimes b=0$, hence $b=0$. Thus $x=a\in L$. Therefore
		$\G(H)\subseteq \G(L)$, and so $H_0=L_0$.
		
		Now assume (ii). Since $H_0=L_0$, we have $\G(H)=\G(L)$. By
		Lemma~\ref{lem:adapted-rank-two-basis}, there exist
		$x\in H\setminus L$ and $g\in\G(L)$ such that
		\[
		H=L\oplus Lx,\qquad 	\Delta(x)=x\otimes g+1\otimes x+\omega,
		\]
		for some $\omega\in L\otimes L$.
		
		Replacing $x$ by $x-\epsilon(x)1$ does not change the direct sum
		decomposition. Thus we may assume $\epsilon(x)=0$; after this
		replacement the coproduct still has the form
		\[
		\Delta(x)=x\otimes g+1\otimes x+\omega
		\]
		with $\omega\in L\otimes L$. Applying
		$\epsilon\otimes\id$ and $\id\otimes\epsilon$ gives
		\[
		\omega\in L^+\otimes L^+.
		\]
		
		Because $\{1,x\}$ is a free left $L$-basis, for every $a\in L$
		there are unique elements $\sigma(a),\delta(a)\in L$ such that
		\[
		xa=\sigma(a)x+\delta(a).
		\]
		Associativity of $H$ implies that $\sigma$ is a unital algebra
		endomorphism of $L$ and that $\delta$ is a $(\sigma,1)$-derivation.
		Writing $x^2$ in the same basis gives unique $u,v\in L$
		with $	x^2=u+vx$. Then it follows by $\epsilon(x)=0$ that $\epsilon(u)=0$, that is, $u\in L^+$. Hence $H$ is a normalized quadratic extension of
		$L$, with specified support $(g,1)$.
		
		The final classification statement follows by combining the
		equivalence of (i) and (ii) with Theorem~\ref{thm:orbit} and
		Remark~\ref{rmk:unmarked-supports}.
	\end{proof}
	
	\begin{pro}\label{pro:normal-quadratic-extension-char2}
		Let $L$ be a finite-dimensional pointed Hopf algebra over $\K$, and
		let $H$ be a normalized quadratic extension of $L$. If $L$ is normal
		in $H$, then $\Char\K=2$.
	\end{pro}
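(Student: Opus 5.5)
The plan is to pass to the quotient Hopf algebra $\bar H=H/L^+H$, which exists because $L$ is normal (see the remark after Definition~\ref{def:sge}). In $\bar H$ the image of $w$ will be a nonzero primitive element whose square is a multiple of itself. In characteristic different from $2$ this forces a contradiction.

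\emph{Step 1: the size of the quotient.} Since $H=L\oplus Lw$ is free over $L$ with basis $\{1,w\}$, we have
\[
L^+H=L^+L+L^+Lw=L^+\oplus L^+w .
\]
Hence $\bar H$ has basis $\{\bar 1,\bar w\}$ and dimension $2$. Normality gives $L^+H=HL^+$, so $L^+H$ is a Hopf ideal and $\bar H$ is a Hopf algebra. The quotient map $\pi$ sends every $a\in L$ to $\epsilon(a)\bar 1$.

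\emph{Step 2: $\bar w$ is primitive.} Apply $\pi\otimes\pi$ to \eqref{eq:coproduct}. Since $g,h$ map to $\bar1$, and $\omega\in L^+\otimes L^+$ by Proposition~\ref{pro:cocycle} so that it maps to $0$, we get
\[
\Delta(\bar w)=\bar w\otimes\bar 1+\bar 1\otimes\bar w .
\]
Next apply $\pi$ to \eqref{eq:power}. Using $\epsilon(u)=0$ from Proposition~\ref{pro:cocycle}, this gives $\bar w^2=\epsilon(v)\bar w$. In particular $\bar w^2$ is again primitive.

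\emph{Step 3: compare coproducts.} Because $\Delta$ is an algebra map on $\bar H$,
\[
\Delta(\bar w^2)=\bar w^2\otimes\bar1+2\,\bar w\otimes\bar w+\bar1\otimes\bar w^2 .
\]
On the other hand, primitivity of $\bar w^2=\epsilon(v)\bar w$ gives $\Delta(\bar w^2)=\bar w^2\otimes\bar1+\bar1\otimes\bar w^2$. Subtracting yields $2\,\bar w\otimes\bar w=0$ in $\bar H\otimes\bar H$. Since $\bar w\neq0$ by Step 1, the tensor $\bar w\otimes\bar w$ is nonzero, and therefore $2=0$ in $\K$, i.e.\ $\Char\K=2$.

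The only delicate point is Step 1, namely checking that $L^+H$ really has codimension $2$ with $\bar w\neq0$. This follows directly from the free left basis and the equality $L^+L=L^+$. The remaining steps are formal computations.
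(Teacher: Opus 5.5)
Your proposal is correct and matches the paper's proof step for step: pass to the quotient by the Hopf ideal $L^+H=L^+\oplus L^+w$, show that $\bar w$ is primitive, and compare the two expressions for $\Delta(\bar w^2)$ to obtain $2\,\bar w\otimes\bar w=0$. The only cosmetic difference is that you read off $\bar w^2=\epsilon(v)\bar w$ directly from \eqref{eq:power} and $\epsilon(u)=0$. The paper instead writes $\bar w^2=\lambda\bar w+\mu$ and uses the counit to show $\mu=0$.
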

	\begin{proof}
		Since $L$ is normal in $H$, the subspace $L^+H=HL^+$ is a Hopf
		ideal of $H$, so the quotient $	Q:=H/L^+H$ is a Hopf algebra. By $H=L\oplus Lw$ as a left $L$-module, we have
		\[
		L^+H=L^+\oplus L^+w,
		\]
		hence
		\[
		Q\cong \K 1\oplus \K x,
		\]
		where $x$ denotes the image of $w$. In particular, $\dim Q=2$ and
		$x\neq0$.
		
		We compute the coproduct of $x$ in $Q$. In $H$,
		\[
		\Delta(w)=w\otimes g+h\otimes w+\omega
		\]
		with $g,h\in\G(L)$ and $\omega\in L^+\otimes L^+$. Passing to $Q$,
		the images of $g$ and $h$ are $1$, since $g-1,h-1\in L^+$, and the
		image of $\omega$ is $0$, since $\omega\in L^+\otimes L^+$. Hence
		\[
		\Delta_Q(x)=x\otimes1+1\otimes x,
		\]
		that is, $x$ is a primitive element of $Q$.
		
		Write $x^2=\lambda x+\mu 1$ with $\lambda,\mu\in\K$. Applying the
		counit to this identity and using $\epsilon(x)=0$, we get
		$\mu=0$, that is, $	x^2=\lambda x.$ Applying $\Delta_Q$ to this relation, the left-hand side is
		\[
		\Delta_Q(x^2)=\Delta_Q(x)^2
		=(x\otimes1+1\otimes x)^2
		=x^2\otimes1+2\,x\otimes x+1\otimes x^2,
		\]
		while the right-hand side is
		\[
		\Delta_Q(\lambda x)=\lambda x\otimes1+\lambda 1\otimes x.
		\]
		Substituting $x^2=\lambda x$ and comparing the middle terms yields
		$2\,x\otimes x=0$. Since $x\otimes x\neq0$,  $2=0$ and hence
		$\Char\K=2$.
	\end{proof}
	\begin{rmk}
		Proposition~\ref{pro:normal-quadratic-extension-char2} shows that
		a normalized quadratic extension of a finite-dimensional pointed
		Hopf algebra cannot have a normal base when
		$\Char\K\neq2$. In characteristic $2$, this obstruction vanishes,
		which allows the nontrivial quadratic extensions studied in
		Sections~\ref{sec:case-dimR4} and~\ref{sec:case-dimR8}.
	\end{rmk}
	
	\section{The classification in dimension $16$}
	\label{sec:classification}
	
	Throughout this section, $\K$ is algebraically closed of characteristic
	$2$. Put $\Z_4=\langle g\mid g^4=1\rangle$ and
	$\Z_2\times\Z_2=\langle g,h\mid g^2=h^2=1,\ gh=hg\rangle$.
	For $\Z_4$, let $\beta_1(g)=1$; for $\Z_2\times\Z_2$, let
	$\beta_0(g)=1$, $\beta_0(h)=0$, and
	$\beta_t(g)=1$, $\beta_t(h)=t$.
	These are homomorphisms to $(\K,+)$.
	
	In the tables below, the symbols $P_G(\epsilon,\beta,\mu)$,
	$\mathcal A_a$, $C(r,\lambda)$, $D(a,b,\lambda)$,
	$H_0(\epsilon,\beta,\gamma,\lambda,\mu,\nu)$, and
	$H_1(\epsilon,\kappa,\tau)$ denote the Hopf algebras whose defining
	relations are given in Proposition~\ref{pro:primitive-support-data}
	and Lemmas~\ref{lem:case-m=1}, \ref{lem:case-m=2theta},
	\ref{lem:42n-Classification-2}, \ref{lem:m0-dim16-relations}, and
	\ref{lem:m1-dim16-relations}. Each of these algebras is generated by
	the group $G$ and the indicated elements, with the common coalgebra
	structure described below. The tables list the group $G$ and support $s$ explicitly; the
	defining relations are given in the cited lemmas.
	
	Each entry in the tables denotes the entire Hopf algebra with the
	indicated presentation and support $s$. Its coalgebra structure is
	\begin{gather*}
		\Delta(t)=t\otimes t\quad(t\in G),\qquad
		\Delta(x)=x\otimes1+s\otimes x,\\
		\Delta(y)=y\otimes1+s^2\otimes y+xs\otimes x.
	\end{gather*}
	For $G=\Z_2=\langle g\rangle$, there is also a generator $z$ with
	\[
	\Delta(z)=z\otimes1+1\otimes z
	+xys\otimes x+xs\otimes xy+y\otimes y.
	\]
	The counit is $1$ on group elements and zero on $x,y,z$.
	
	Let $R$ be the diagram of $H$. 
	Since $H$ is non-connected and the infinitesimal braiding is
	one-dimensional, the possible values are $\dim R=2,4,8$. If
	$\dim R=2$, then $R=\mathcal B(R(1))$; hence $H$ is generated by
	group-like and skew-primitive elements, and this case is covered by
	\cite{X23}. Therefore the remaining non-Nichols diagrams have
	$\dim R=4$ or $\dim R=8$.  Moreover, in each of these cases we will
	construct an intrinsic eight-dimensional Hopf subalgebra $L$ with
	$L_0=H_0$, so that Theorem~\ref{thm:classification-index-two}
	applies and $H$ is a normalized quadratic extension of $L$.
	
	\begin{thm}\label{thm:dim16-classification}
		Let $H$ be a non-connected pointed Hopf algebra of dimension $16$
		over $\K$, with one-dimensional infinitesimal braiding and a diagram
		that is not a Nichols algebra. Then $H$ is isomorphic to an entry in
		Table~\ref{tab:isolated-classes} or Table~\ref{tab:parameter-families}.
		Every entry satisfies these hypotheses. Entries in distinct rows of
		either table, or in different tables, are non-isomorphic. The discrete
		parameters within Table~\ref{tab:isolated-classes} give pairwise
		non-isomorphic algebras. Within each row of
		Table~\ref{tab:parameter-families}, two entries are isomorphic exactly
		under the equivalence specified in its last column.
		Thus the list consists of $24$ individual representatives and eight
		parameter families, seven with one parameter and one with two parameters.
	\end{thm}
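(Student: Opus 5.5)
The proof will follow the reduction already sketched before the statement. First I pin down the coradical data. Since $\dim H=16=|G|\dim R$ with $\dim R\in\{4,8\}$, the group $G=\G(H)$ has order $4$ or $2$. Because $G$ is a $2$-group and $\Char\K=2$, the one-dimensional infinitesimal braiding $\K x$ has trivial character, its support $s$ is central, and $\mathcal B(\K x)=\K[x]/(x^2)$.

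\emph{Case $\dim R=4$.} Here $G\in\{\Z_4,\Z_2\times\Z_2\}$. The subalgebra $L=\langle G,x\rangle$ lifts $\mathcal B(\K x)\#\K[G]$, so $\dim L=8$ and $L_0=H_0$. It is intrinsic, being generated by $H_1$. Its possible forms are exactly the algebras $P_G(\epsilon,\beta,\mu)$ of Proposition~\ref{pro:primitive-support-data}, which come from the Nichols case of \cite{X23}.

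\emph{Case $\dim R=8$.} Here $G=\Z_2$, and I take $L=\langle H_2\rangle=H_3$. To see that $\dim L=8$, I use the injection $R(m)/S(m)\hookrightarrow\HH^{2,m}_{1,1}(S)$ of Section~\ref{sec:prelim} twice. Applied to $S=\mathcal B(\K x)$, it produces a degree-two generator $y$ with $\bar\Delta(y)=xs\otimes x$ up to gauge. A further application shows that the degree-three part of $R$ is spanned by $xy$ and $yx$. Once $L$ is known, Theorem~\ref{thm:classification-index-two} shows that $H$ is a normalized quadratic extension of $L$ in both cases.

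Next I compute the relevant second cohomology. This is $\HH^2_{g',h'}(L)$ in the homogeneous degree of the new generator: degree $2$ for $y$ when $\dim R=4$, and degree $4$ for $z$ when $\dim R=8$. Lemma~\ref{lem:prelim-extension-injection} identifies the admissible coproducts with classes in this group, and the bound $\dim\HH^n_{g,h}(L)\le\dim\HH^n_{g,h}(\gr L)$ lets me compute in the graded model. Via \eqref{eq:coalgebra-ext}, the graded model reduces to Ext over the dual of a Radford biproduct, which is a tensor product of $\K[x]/(x^2)$-type algebras with $\K[G]^*$. Using the gauge $w\mapsto\alpha w+b$ of Definition~\ref{def:gauge}, I normalize $\omega$ to the stated forms $xs\otimes x$, respectively $xys\otimes x+xs\otimes xy+y\otimes y$. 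The support is fixed by the remark that $\sigma$ is a winding automorphism composed with conjugation.

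With the coproduct fixed, I solve for the Ore data $(\sigma,\delta,u,v)$. In both cases $\sigma$ is essentially the identity or a conjugation, since $G$ is abelian and characters are trivial. The equations \eqref{eq:comp1}--\eqref{eq:comp3} force each $\delta(a)$ and each of $u,v$ to be a skew-primitive-type element modulo lower filtration terms. This reduces the data to finitely many scalars. I then impose associativity (Proposition~\ref{pro:associative-data}) and \eqref{eq:comp4}, giving the lemmas listing $\mathcal A_a$, $C(r,\lambda)$, $D(a,b,\lambda)$, $H_0$ and $H_1$. Proposition~\ref{pro:reconstruction} gives existence and $\dim=16$, so every entry satisfies the hypotheses. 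For these entries I still check directly that the diagram is non-Nichols and that $R(1)$ is one-dimensional.

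For isomorphism classes, intrinsicness of $L$ means every Hopf isomorphism is base-preserving. Theorem~\ref{thm:orbit} and Remark~\ref{rmk:unmarked-supports} then reduce the question to orbits of $\Gamma$, together with group-like rescalings, acting on the scalar parameters. Distinct rows are separated by invariants: $G$, $\dim R$, the isomorphism class of $L$, and whether $x^2$, $[y,x]$, and so on vanish.

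The main obstacle is this last orbit computation. One has to track how $(\alpha,b,\psi)$ act on the residual scalars to get the exact identifications: $U_3$ and $U_5$ quotients from $\alpha$-rescaling with weights, and the $(t,u)\mapsto(t+1,u)$, $(t^{-1},u/t^3)$ action from $\Aut(\Z_2\times\Z_2)$ acting on $\beta_t$. I would handle it by first listing $\Aut_{\mathrm{Hopf}}(L)$ explicitly and then computing $T_{\alpha,b}$ on a normal form.
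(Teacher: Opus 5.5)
Your overall architecture matches the paper's: an intrinsic eight-dimensional base $L$ ($\langle H_1\rangle$, respectively $H_3$), Theorem~\ref{thm:classification-index-two} to make $H$ a normalized quadratic extension, $\HH^2$ to fix the coproduct of the new generator, then the Ore-data equations and an orbit computation. However, two load-bearing steps fail as written.

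\textbf{The bases for $\dim R=4$.} The possible bases are not the algebras $P_G(\epsilon,\beta,\mu)$. Those are sixteen-dimensional, since they already contain $y$, and they exist only for primitive support. What you need is the list of eight-dimensional bases in Definition~\ref{def:base-4dim}, and it has to be derived, not quoted (Lemma~\ref{lem:base-location}):
\begin{itemize}
\item comparing coproducts gives $[t,x]=\chi(t)t(1+s)$ and $x^2=\chi(s)x+d(1+s^2)$ for an additive $\chi:G\to(\K,+)$;
\item consistency of the square relation forces $\chi(t)(\chi(t)+\chi(s))(1+s^2)=0$;
\item the change $x\mapsto x+c(1+s)$ and rescaling normalize the rest.
\end{itemize}
This is where the supports $s=g,g^2$ over $\Z_4$ and $s=g$ over $\Z_2\times\Z_2$ come from. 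Without it, your plan has no base on which $\mathcal A_a$, $C(r,\lambda)$ and $D(a,b,\lambda)$ could be built.

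\textbf{The cohomology computation.} It rests on a false identification: the dual of $\gr L$ is not a tensor product of $\K[x]/(x^2)$-type algebras with $\K[G]^*$.
\begin{itemize}
\item For $s\ne1$, the arrows of the coalgebra quiver go from $t$ to $ts$. The dual is then a smash product with $\Omega^2(S_i)\cong S_{is^2}$. A tensor-product model puts the nonzero $\operatorname{Ext}^2$ at $t=1$ instead of $t=s^2$, which is wrong for $\Z_4$ with $s=g$.
\item For $\dim R=8$ the error is worse. As a coalgebra, $\operatorname{span}\{1,x,y,xy\}$ has one-dimensional primitive space; it is the divided-power coalgebra truncated in degree $3$. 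You seem to be confusing its algebra structure $\K[x,y]/(x^2,y^2)$ with its dual. Its dual is $\K[\xi]/(\xi^4)$, whose $\operatorname{Ext}^2$ is one-dimensional in internal degree $4$. By contrast, $\K[a]/(a^2)\otimes\K[b]/(b^2)$ has three-dimensional $\operatorname{Ext}^2$, with classes in internal degrees $2,3,4$. With that model, the bound $\dim\HH^2(L)\le\dim\HH^2(\gr L)$ would not exclude other coproducts for $z$.
\end{itemize}
The paper avoids this by observing that $L$ itself is the truncated path coalgebra: paths of length $\le1$, respectively $\le3$, on the quiver with arrows $t\to ts$. It then computes $\operatorname{Ext}^2$ over the radical-truncated path algebra via syzygies, and exhibits an explicit cocycle that is not a coboundary for shape or degree reasons.

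\textbf{Smaller points.}
\begin{itemize}
\item The support $(1,s^2)$, respectively $(1,1)$, of the new generator comes from this cohomology computation after $w\mapsto g^{-1}w$, not from the remark on $\sigma$.
\item Triviality of characters of $G$ does not give $\sigma=\id$. The map $\epsilon\sigma$ is a character of $L$ and may be nonzero on $x$ when $x^2=x$; the paper excludes this because it would force $d^1_{1,1}(\delta(x))=x\otimes x$.
\item For $\dim R=8$ you need $[x,y]=0=y^2$ in $R$ and $\HH^{2,3}_{1,1}(B)=0$. These give $\dim R(3)=1$, hence $\dim H_3=8$.
\end{itemize}
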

	
	\begin{xltabular}{\textwidth}{ccX}
		\toprule
		$G$ & $s$ & Representatives \\
		\midrule
		\endhead
		$\Z_4$ & $1$ & $P_{\Z_4}(0,0,0)$, $P_{\Z_4}(0,0,1)$,
		$P_{\Z_4}(1,0,0)$, $P_{\Z_4}(1,\beta_1,0)$ \\[2mm]
		$\Z_4$ & $g$ & $\mathcal A_0$, $\mathcal A_1$ \\[2mm]
		$\Z_4$ & $g^2$ & $C(0,0)$, $C(0,1)$ \\[2mm]
		$\Z_2\times\Z_2$ & $1$ & $P_{\Z_2\times\Z_2}(0,0,0)$, $P_{\Z_2\times\Z_2}(0,0,1)$,
		$P_{\Z_2\times\Z_2}(1,0,0)$, $P_{\Z_2\times\Z_2}(1,\beta_0,0)$ \\[2mm]
		$\Z_2\times\Z_2$ & $g$ & $D(0,0,0)$, $D(0,0,1)$, $D(1,0,0)$ \\[2mm]
		$\Z_2$ & $1$ & $\mathcal T_0=H_0(0,0,0,0,0,0)$,
		$\mathcal T_1=H_0(0,0,0,0,0,1)$;
		$\mathcal E(r,d)=H_0(1,r,d,0,0,0)$,
		$(r,d)\in\{0,1\}^2$ \\[2mm]
		$\Z_2$ & $g$ & $H_1(0,0,0)$, $H_1(0,0,1)$, $H_1(1,0,0)$ \\[2mm]
		\bottomrule
		\caption{Individual representatives of dimension $16$}
		\label{tab:isolated-classes}
	\end{xltabular}
	
	\begin{xltabular}{\textwidth}{ccXl}
		\toprule
		$G$ & $s$ & Family & Parameter equivalence \\
		\midrule
		\endhead
		$\Z_4$ & $1$ & $\mathcal N_1^{(4)}(\lambda)
		=P_{\Z_4}(0,\beta_1,\lambda)$ & $\lambda=\lambda'$ \\[2mm]
		$\Z_4$ & $g^2$ & $\cL(\lambda)=C(1,\lambda)$
		& $\lambda=\lambda'$ \\[2mm]
		$\Z_2\times\Z_2$ & $1$ & $\mathcal P(t,u)=P_{\Z_2\times\Z_2}(0,\beta_t,u)$
		& $(t,u)\sim(t',u')$ \\[2mm]
		$\Z_2\times\Z_2$ & $g$ & $\mathcal M_1(\lambda)=D(0,1,\lambda)$
		& $\lambda=\lambda'$ \\[2mm]
		$\Z_2$ & $1$ & $\mathcal N_1(t)=H_0(0,0,1,0,0,t)$
		& $t^3=(t')^3$ \\[2mm]
		$\Z_2$ & $1$ & $\mathcal N_2(t)=H_0(0,0,t,0,1,0)$
		& $t=t'$ \\[2mm]
		$\Z_2$ & $1$ & $\mathcal N_3(t)=H_0(0,0,t,1,0,0)$
		& $t^5=(t')^5$ \\[2mm]
		$\Z_2$ & $g$ & $\mathcal M_3(t)=H_1(0,1,t)$
		& $t^3=(t')^3$ \\[2mm]
		\bottomrule
		\caption{Parameter families of dimension $16$}
		\label{tab:parameter-families}
	\end{xltabular}

	All single parameters range over $\K$, and $(t,u)$ ranges over $\K^2$.
	The relation $\sim$ in the third row is the equivalence relation
	generated by
	\[
	(t,u)\longmapsto(t+1,u),\qquad
	(t,u)\longmapsto(t^{-1},u/t^3)\quad(t\ne0).
	\]
	
	The parameter sets for $\mathcal N_1$, $\mathcal N_3$, and
	$\mathcal M_3$ are consequently $\K/U_3(\K)$, $\K/U_5(\K)$,
	and $\K/U_3(\K)$, respectively, where these groups act by multiplication.
	
	The proof of Theorem~\ref{thm:dim16-classification} occupies the
	remainder of this section and is divided according to the dimension
	of the diagram $R$.
	
	In Section~\ref{sec:case-dimR4}, we treat the case $\dim R=4$. We
	determine the possible eight-dimensional Hopf subalgebras
	$L\subseteq H$ with $L_0=H_0$, realize $H$ as a normalized quadratic
	extension of $L$ by Theorem~\ref{thm:classification-index-two}, and
	solve the corresponding extension and isomorphism problems. This yields
	precisely the rows of Tables~\ref{tab:isolated-classes} and
	\ref{tab:parameter-families} with group $\Z_4$ or $\Z_2\times\Z_2$.
	
	Section~\ref{sec:case-dimR8} treats the case $\dim R=8$ in the same
	way and yields the rows with group $\Z_2$. The reconstruction results
	in the two cases show that every displayed presentation defines a
	pointed Hopf algebra of dimension $16$ with the required diagram,
	while the isomorphism criteria established there give exactly the
	parameter identifications and non-isomorphism assertions stated in
	the theorem.

	\section{The case \(\dim R=4\): base algebras and extensions by \(y\)}\label{sec:case-dimR4}
	
	Throughout this section, $\K$ is algebraically closed of characteristic
	$2$. Except in Lemma~\ref{lem:dim4-pointed} and
	Corollary~\ref{cor:order-two-base}, $H$ denotes a pointed Hopf algebra of
	dimension $16$ with one-dimensional infinitesimal braiding and a
	four-dimensional diagram $R$ which is not a Nichols algebra.
	
	\begin{lem} \cite[Theorem 3.3]{WW}\label{lem:dim4-pointed}
		There are six isomorphism classes of four-dimensional non-connected
		pointed Hopf algebras over $\K$: the group algebras $\K[\Z_4]$ and
		$\K[\Z_2\times\Z_2]$, and the following four algebras. In each of the
		four presentations $g^2=1$, $\Delta(g)=g\otimes g$, and
		$\epsilon(x)=0$:
		\begin{enumerate}
			\item $[g,x]=0$, $x^2=\epsilon x$, $\epsilon\in\{0,1\}$, with
			$\Delta(x)=x\otimes1+1\otimes x$;
			\item $[g,x]=\epsilon(1+g)$, $x^2=\epsilon x$,
			$\epsilon\in\{0,1\}$, with $\Delta(x)=x\otimes1+g\otimes x$.
		\end{enumerate}
	\end{lem}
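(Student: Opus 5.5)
This lemma is cited from \cite[Theorem 3.3]{WW}, but a self-contained proof along the lines of the paper runs as follows. Let $A$ be a four-dimensional non-connected pointed Hopf algebra. Then $|\G(A)|$ divides $4$ by Nichols--Zoeller and is at least $2$, so $|\G(A)|\in\{2,4\}$. If $|\G(A)|=4$, then $A=\K[\G(A)]$ is $\K[\Z_4]$ or $\K[\Z_2\times\Z_2]$. So assume $\G(A)=\langle g\rangle\cong\Z_2$. Then $\dim R=2$ for the diagram $R$ of $A$, since $\dim A=|\G(A)|\dim R$. Hence $R=R(0)\oplus R(1)$ with $R(1)=\K x$ one-dimensional. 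In characteristic $2$ the character on the $2$-group $\Z_2$ is trivial, so $R=\mathcal B(\K x)=\K[x]/(x^2)$ and $A_1=A$. By the decomposition $C_1=C_0\oplus\bigoplus Q_{a,b}$ we may therefore choose a nontrivial skew-primitive $x\in\Pp_{1,s}(A)\setminus\K(1-s)$, with $s\in\{1,g\}$ equal to its support. Replacing $x$ by $sx$ or $xs^{-1}$ if needed, we normalize so that $\Delta(x)=x\otimes1+s\otimes x$ and $\epsilon(x)=0$. Then $\{1,g,x,gx\}$ is a basis.

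The next step is to determine the relations. First, $gxg^{-1}$ lies in $\Pp_{1,s}(A)=\K x\oplus\K(1-s)$, and conjugation by $g$ has order dividing $2$. Write $gxg^{-1}=\chi x+c(1-s)$. The coefficient $\chi$ satisfies $\chi^2=1$, hence $\chi=1$ in characteristic $2$. Thus $[g,x]g^{-1}=c(1-s)$, i.e.\ $[g,x]=c(g-sg)$.
\begin{itemize}
\item If $s=1$, this bracket vanishes, so $[g,x]=0$.
\item If $s=g$, we get $[g,x]=c(1+g)$, using $g^2=1$ and $-1=1$.
\end{itemize}
Second, $x^2$ satisfies $\Delta(x^2)=x^2\otimes1+s^2\otimes x^2$, because the cross terms $xs\otimes x+sx\otimes x$ cancel. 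In the case $s=g$, cancelling these cross terms requires $[g,x]$ to be a multiple of $1+g$ whose contribution is absorbed, and this is where care is needed. Since $s^2=1$, $x^2$ is primitive, so $x^2=\lambda x$ when $s=1$, and $x^2\in\Pp_{1,1}$ forces $x^2=\lambda'(\ldots)$ in the other case.

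The main obstacle is this computation of $x^2$ in the case $s=g$, together with the compatibility between $c$ and the coefficient of $x^2$. I would expand $\Delta(x^2)$ including the term $(g\otimes x)(x\otimes 1)+(x\otimes1)(g\otimes x)=gx\otimes x+xg\otimes x=[x,g]\otimes x$. By primitivity this must be $x^2\otimes 1+1\otimes x^2$ up to the skew terms. Solving yields $x^2=\mu x+\nu(1+g)$, and then the relation $c=\mu$ follows from comparing coefficients. A change $x\mapsto x+a(1+g)$ removes $\nu$.

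Finally, I rescale: $x\mapsto\alpha x$ scales both $c$ and $\lambda$ by $\alpha$, so each parameter is normalized to $\epsilon\in\{0,1\}$. The resulting algebras are pairwise non-isomorphic. Group algebras are distinguished from the rest by $|\G(A)|$. Among the four remaining algebras, the support $s$ (whether $x$ is primitive) is preserved by isomorphisms, and $\epsilon$ is distinguished by whether the algebra is local or semisimple.
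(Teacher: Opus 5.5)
Your overall route is the right one: reduce to $\G(A)\cong\Z_2$, lift the infinitesimal braiding to $x\in\Pp_{1,s}(A)$, and read off $[g,x]$ and $x^2$ by comparing coproducts. The paper itself only cites this result, but it runs the same computation for the order-$4$ bases in Lemma~\ref{lem:base-location}. However, the step you single out as the main obstacle is handled incorrectly. For $s=g$ and $c\neq0$, $x^2$ is not primitive. Since $xg+gx=[g,x]=c(1+g)$ in characteristic $2$,
\[
\Delta(x^2)=x^2\otimes1+1\otimes x^2+c(1+g)\otimes x,
\qquad
c(1+g)\otimes x=\Delta(cx)-cx\otimes1-1\otimes cx,
\]
so $x^2-cx\in\Pp(A)$. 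When $s=g$ one has $\Pp(A)=0$: by Taft--Wilson, $A_1/A_0$ is spanned by the classes of $x\in\Pp_{1,g}(A)$ and $gx\in\Pp_{g,1}(A)$. Hence $x^2=cx$ exactly. There is no free term $\nu(1+g)$. In your ansatz, comparing $\nu(1\otimes1+g\otimes g)$ with $\nu(g\otimes1+1\otimes g)$ forces $\nu=0$. Your proposed remedy would also fail if $\nu$ were nonzero. Since $(1+g)^2=1+g^2=0$ and $a\,(x(1+g)+(1+g)x)=ac(1+g)$, the shift $x\mapsto x+a(1+g)$ leaves $\nu$ unchanged once $\mu=c$. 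The analogous shift in Lemma~\ref{lem:base-location} removes $d(1+s^2)$ only because there $s$ has order $4$, so $1+s^2\neq0$.

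Two further repairs are needed. First, your separating invariant is misstated. Neither algebra with $\epsilon=1$ is semisimple, since $A(1+g)$ is a nonzero two-sided ideal of square zero (for $s=g$ use $(1+g)x=(x+1)(1+g)$). Locality works instead. For $\epsilon=0$ the algebra is commutative and generated by the nilpotents $1+g$ and $x$, hence local; for $\epsilon=1$ the element $x$ is a nontrivial idempotent. Together with $\dim\Pp(A)\in\{0,1\}$, this separates the four algebras. Second, the statement also asserts that the four presentations exist, which your argument does not address. You must check that the relations generate a Hopf ideal; for instance, for $s=g$, $r=[g,x]-\epsilon(1+g)$ satisfies $\Delta(r)\equiv r\otimes g+1\otimes r$ modulo $g^2-1$. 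You must also check that the overlaps $ggx$ and $gxx$ resolve, so that $\{1,g,x,gx\}$ is a basis. The antipode then exists automatically, because the bialgebra is pointed with invertible group-likes.
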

	%
	\subsection{The eight-dimensional base Hopf algebras}
	
	\begin{defn}\label{def:base-4dim}
		Let $G$ be $\Z_4=\langle g\rangle$ or
		$\Z_2\times\Z_2=\langle g,h\rangle$, and choose $s\in G$ as below.
		The base algebras $L$ have the group relations of $G$ and
		\[
		\Delta(t)=t\otimes t\quad(t\in G),\qquad
		\Delta(x)=x\otimes1+s\otimes x,\qquad \epsilon(x)=0.
		\]
		Their remaining relations, in normalized generators, are:
		\begin{enumerate}
			\item $s=1$: $[t,x]=0$ for every $t\in G$, and
			$x^2=\epsilon x$, $\epsilon\in\{0,1\}$;
			\item $G=\Z_4$, $s=g$: $[g,x]=a g(1+g)$,
			$x^2=ax$, $a\in\{0,1\}$;
			\item $G=\Z_4$, $s=g^2$: $[g,x]=r g(1+g^2)$,
			$x^2=0$, $r\in\{0,1\}$;
			\item $G=\Z_2\times\Z_2$, $s=g$:
			$[g,x]=a(1+g)$, $[h,x]=b h(1+g)$, $x^2=ax$,
			where $a\in\{0,1\}$ and $b\in\K$.
		\end{enumerate}
		In the last case $[g,h]=0$ is part of the presentation. Each algebra
		has basis $\{tx^j:t\in G,\ j=0,1\}$ and dimension $8$.
	\end{defn}
	
	\begin{lem}\label{lem:base-location}
		The group $\G(H)$ has order $4$. The Hopf subalgebra
		$L=\langle H_1\rangle=\langle \G(H),x\rangle$ has dimension $8$ and
		admits a presentation in Definition~\ref{def:base-4dim}.
		For $t\in \G(H)$,
		\begin{equation}\label{eq:section4-primitive-spaces}
			\Pp_{1,t}(H)=\K(1-t)+
			\begin{cases}\K x,&t=s,\\0,&t\ne s.\end{cases}
		\end{equation}
		When $s=1$ the first summand for $t=s$ is zero.
	\end{lem}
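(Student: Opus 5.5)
We use $\dim H=|\G(H)|\dim R=16$ together with $\dim R=4$, which gives $|\G(H)|=4$ and hence $\G(H)\cong\Z_4$ or $\Z_2\times\Z_2$. The infinitesimal braiding is $R(1)=\K x_0$ with support $s\in Z(G)=G$. Since $G$ is a $2$-group and $\Char\K=2$, the associated character is trivial, so $\mathcal B(R(1))=\K[x_0]/(x_0^2)$ has dimension $2$, as recalled in Section~\ref{sec:prelim}. In $\gr H\cong R\#\K[G]$, the subalgebra generated by degrees $0$ and $1$ is $\mathcal B(R(1))\#\K[G]$, of dimension $8$.

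We now lift to $H$. We have $H_1/H_0\cong R(1)\#\K[G]$ as a $G$-graded piece, so
\[
H_1=\K[G]\oplus\bigoplus_t \Pp_{1,t}(H)\text{ modulo }\K(1-t).
\]
The decomposition $C_1=C_0\oplus\bigoplus Q_{g,h}$ together with $\dim R(1)=1$ shows that the only nontrivial skew-primitive classes are $\K x$ with $x\in\Pp_{1,s}(H)$, multiplied by group-likes. Translating to $(1,t)$-type gives \eqref{eq:section4-primitive-spaces}. When $s=1$ we have $\K(1-1)=0$. We normalize $x$ so that $\epsilon(x)=0$. Set $L=\langle G,x\rangle$. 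Because the coradical filtration of $L$ is the induced one and $\gr L$ embeds in $\gr H$ as the subalgebra generated in degrees $\le1$, we get $\gr L\cong\mathcal B(\K x_0)\#\K[G]$, so $\dim L=8$ with basis $\{tx^j\}$. Moreover $L=\langle H_1\rangle$.

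It remains to find the presentation. For $t\in G$, the element $txt^{-1}$ lies in $\Pp_{t,tst^{-1}}=\Pp_{1,s}\cdot t$ after multiplying by $t^{-1}$. Thus $txt^{-1}=\chi(t)x+c_t(1-s)$, and $\chi=1$ since characters of a $2$-group in characteristic $2$ are trivial. So $[t,x]=c_t\,t(1-s)$ up to arranging factors. Next, $x^2$ is $(1,s^2)$-skew-primitive because the braiding is trivial in characteristic $2$: $\Delta(x^2)=x^2\otimes1+s^2\otimes x^2+(xs+sx)\otimes x$, and the cross term vanishes modulo the commutator. Hence $x^2\in\Pp_{1,s^2}(H)$, and we read it off from \eqref{eq:section4-primitive-spaces}. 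The remaining work is a case split on $(G,s)$ with $s\in\{1,g,g^2\}$ for $\Z_4$, and $s\in\{1,g\}$ for $\Z_2\times\Z_2$ up to automorphism. The constants are constrained by consistency with the group relations ($t^{\mathrm{ord}}=1$ applied to $[t,x]$) and with $x^2$, via the Diamond Lemma \cite{B}. Rescaling $x\mapsto\alpha x+\beta(1-s)$ then normalizes the constants to $\{0,1\}$. In the case $s=g^2$ we find $x^2\in\Pp_{1,1}=\K x$ when $s^2=1$, and comparison with the commutator forces $x^2=0$. For $\Z_2\times\Z_2$ with $s=g$, the parameter $b$ cannot be rescaled away.

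The main obstacle is this final normalization. We must check that all cross terms vanish, which requires the lemma-level structure of $\Pp$ spaces: for instance, $xs+sx=c_s s(1-s)$ contributes a group-like times a skew-primitive term that must be absorbed. We must also verify, case by case, that the scalars can be brought into the stated forms. Here Lemma~\ref{lem:dim4-pointed} can be used as a model for the computations restricted to $\langle s,x\rangle$.
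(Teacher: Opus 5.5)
Your outline follows the paper's route: $|\G(H)|=4$ from $\dim H=|\G(H)|\dim R$, Taft--Wilson for the skew-primitive spaces, $txt^{-1}\in\Pp_{1,s}(H)$ giving $[t,x]=c_t\,t(1+s)$, then the square of $x$ and normalization. However, there is a genuine gap at the square relation, and this is exactly the step that produces the terms $ax$ in cases (2) and (4) of Definition~\ref{def:base-4dim}.

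\textbf{The square relation.} Your claim that $x^2\in\Pp_{1,s^2}(H)$ is false whenever $c_s\neq0$. In $H$ one has $xs+sx=[s,x]=c_s(s+s^2)$, so
\[
\Delta(x^2)=x^2\otimes1+s^2\otimes x^2+c_s(s+s^2)\otimes x .
\]
This cross term does not vanish. Trivial braiding holds in $\gr H$, not in $H$. The term is absorbed by subtracting $\Delta(c_sx)$, which gives $x^2+c_sx\in\Pp_{1,s^2}(H)$. Hence $x^2=c_sx+d(1+s^2)$ for $s\neq1$.

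For example, when $s=g\in\Z_4$ and $[g,x]=g(1+g)$, one has $x^2=x$, which is not $(1,g^2)$-skew-primitive. Your reading would force $x^2\in\K(1+g^2)$ and lose the relation $x^2=ax$. You identify this as the ``main obstacle'' but do not resolve it.

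\textbf{The case $s=g^2$.} The assertion $\Pp_{1,1}(H)=\K x$ is wrong: by \eqref{eq:section4-primitive-spaces}, $\Pp_{1,1}(H)=0$ whenever $s\neq1$. The correct argument uses additivity $c_{t_1t_2}=c_{t_1}+c_{t_2}$, which follows from $t(1+s)t^{-1}=1+s$ and which you should state. Then $c_{g^2}=2c_g=0$, so the cross term really is zero and $x^2\in\Pp_{1,1}(H)=0$.

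\textbf{Removing $d$.} For $s=g$ of order $4$, the substitution $x\mapsto x+c(1+g)$ changes $d$ to $d+c^2+c_gc$. This can be made zero because $\K$ is algebraically closed. You mention this substitution but do not carry out the computation.

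\textbf{The dimension of $L$.} Saying that the coradical filtration of $L$ is the induced one only gives $\gr L\hookrightarrow\gr H$ with image containing $\mathcal B(\K x_0)\#\K[\G(H)]$. That yields $\dim L\ge8$. Nothing in your argument forces $\gr L$ to be generated in degrees $\le1$, so the upper bound is missing.

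The paper obtains the upper bound from the relations. Once $[t,x]\in\K[\G(H)]$ and $x^2\in\K x+\K[\G(H)]$, every word in $\G(H)$ and $x$ reduces to a combination of $tx^j$ with $j\in\{0,1\}$. These eight elements have independent leading terms in $\gr H$, so $\dim L=8$. The same count shows that the relations found are defining relations for $L$, which is what ``admits a presentation in Definition~\ref{def:base-4dim}'' requires. So you should place the dimension count after the corrected computation of $[t,x]$ and $x^2$, not before it.
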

	\begin{proof}
		By $\dim H=|\G(H)|\dim R$ and $\dim H=16$, $\dim R=4$, we obtain
		$|\G(H)|=4$. Since $\Char\K=2$, every multiplicative character of $\G(H)$ is trivial.
		The degree-one part yields a skew-primitive lift $x$ with support $s$,
		and Taft--Wilson Theorem gives \eqref{eq:section4-primitive-spaces}.
		
		Let $G=\G(H)$. If $s=1$, then $x$ is primitive, so $[t,x]=0$ and
		$x^2=\epsilon x$. If $s\ne1$, comparing coproducts gives
		\[
		[t,x]=\chi(t)t(1+s),\qquad
		x^2=\chi(s)x+d(1+s^2),
		\]
		where $\chi:G\to(\K,+)$ is a group homomorphism. Consistency of the
		square relation yields
		\[
		\chi(t)\bigl(\chi(t)+\chi(s)\bigr)(1+s^2)=0.
		\]
		These formulas give the four cases in Definition~\ref{def:base-4dim}.
		For $s=g$ of order $4$, replacing $x$ by $x+c(1+g)$ changes $d$ to
		$d+c^2+\chi(g)c$, so $d$ can be removed; rescaling $x$ normalizes
		nonzero coefficients to $1$. For $\Z_2\times\Z_2$, $\chi(h)$ remains arbitrary.
		
		The relations reduce any word to $tx^j$ with $j=0,1$, whose leading
		terms in $\gr H$ are independent, hence $\dim L=8$. Conversely, each
		presentation in Definition~\ref{def:base-4dim} satisfies the Diamond
		Lemma, has coradical $\K[G]$, and gives a pointed Hopf algebra of
		dimension $8$.
	\end{proof}

	\subsection{The relevant twisted Hochschild cohomology groups}
	
	\begin{pro}\label{pro:H2-base}
		For every base algebra $L$ in Definition~\ref{def:base-4dim}, for every $t\in G$,
		\[
		\HH^2_{1,t}(L)=0\quad(t\ne s^2),\qquad
		\HH^2_{1,s^2}(L)=\K[\omega_s],\qquad \omega_s=xs\otimes x.
		\]
		In particular, the cocycles for the five support cases are respectively
		$x\otimes x$, $xg\otimes x$, $xg^2\otimes x$,
		$x\otimes x$, and $xg\otimes x$.
	\end{pro}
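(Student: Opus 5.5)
The plan is to bound the cohomology from above by passing to the associated graded coalgebra, and then to exhibit an explicit nontrivial class in the single surviving degree.

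\emph{Step 1: the graded upper bound.} By the dimension bound $\dim\HH^n_{g,h}(L)\le\dim\HH^n_{g,h}(\gr L)$ recalled at the end of Section~\ref{sec:prelim} (from \cite[Lemma 8.4]{WZZ}), it suffices to compute $\HH^2_{1,t}(\gr L)$. By Lemma~\ref{lem:base-location}, $\gr L$ has basis $\{tx^j: t\in G,\ j=0,1\}$. Its coproduct is
\[
\Delta(t)=t\otimes t,\qquad \Delta(tx)=tx\otimes t+ts\otimes tx,
\]
and this is the same for all five base algebras once $s$ is fixed.

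\emph{Step 2: computing the graded cohomology via the dual algebra.} I will use \eqref{eq:coalgebra-ext}, which identifies $\HH^n_{1,t}(\gr L)$ with $\operatorname{Ext}^n_A(\K_1,\K_t)$, where $A=(\gr L)^*$.
\begin{itemize}
\item The algebra $A$ is basic, with orthogonal idempotents $e_t$ dual to the group-likes $t\in G$.
\item Its radical is spanned by the duals of the elements $tx$. Each such element gives an arrow between the vertices $t$ and $ts$ of the Ext-quiver.
\item The radical squares to zero, because $\gr L$ has coradical filtration of length $1$ and no element has a term of type $x\otimes x$ in its coproduct.
\end{itemize}
So $A$ is the radical-square-zero algebra of a quiver in which every vertex has exactly one outgoing and one incoming arrow. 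This quiver is a disjoint union of cycles, namely the cosets of $\langle s\rangle$; when $s=1$ it is a loop at each vertex. For such an algebra the minimal projective resolution of a simple module is uniserial along the cycle. Hence $\operatorname{Ext}^n(\K_1,\K_t)$ is one-dimensional when $t=s^n$ and zero otherwise. The main care needed is the left/right orientation convention: one must check that the arrow leaving vertex $1$ goes to $s$ and not to $s^{-1}$. This can be confirmed directly in degree $1$, since $\HH^1_{1,t}=\Pp_{1,t}/\K(1-t)$ is nonzero exactly for $t=s$ by \eqref{eq:section4-primitive-spaces}. Taking $n=2$ gives $\dim\HH^2_{1,t}(L)\le1$ if $t=s^2$, and $\HH^2_{1,t}(L)=0$ otherwise.

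\emph{Step 3: an explicit nonzero class for $t=s^2$.} First I check that $\omega_s=xs\otimes x$ is a cocycle. Using $\Delta(xs)=xs\otimes s+s^2\otimes xs$, both $(\Delta\otimes\id)(\omega_s)+\omega_s\otimes1$ and $(\id\otimes\Delta)(\omega_s)+s^2\otimes\omega_s$ equal
\[
xs\otimes s\otimes x+s^2\otimes xs\otimes x+xs\otimes x\otimes1.
\]
Next I show it is not a coboundary. For every $b\in L$, the element $d^1_{1,s^2}(b)$ lies in $L_0\otimes L+L\otimes L_0$. This holds because $L=L_1$ is spanned by $G$ and the elements $tx$, whose coproducts computed above have one tensor factor in $L_0$. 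In contrast, $xs\otimes x\notin L_0\otimes L+L\otimes L_0$, since $x,xs\notin L_0$. So $[\omega_s]\neq0$, and together with Step 2 this gives $\HH^2_{1,s^2}(L)=\K[\omega_s]$. Substituting $s=1,g,g^2$ for $\Z_4$ and $s=1,g$ for $\Z_2\times\Z_2$ yields the five listed cocycles; note that $s^2=1$ in every case except $s=g$ in $\Z_4$.

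I expect Step 2 to be the main obstacle. It requires correctly identifying $(\gr L)^*$ as a radical-square-zero quiver algebra and getting the orientation right; the rest is a short direct verification.
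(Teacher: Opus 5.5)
Your proposal is correct and is essentially the paper's argument: you check that $xs\otimes x$ is a cocycle via $\Delta(xs)=xs\otimes s+s^2\otimes xs$, you show it is not a coboundary because no $d^1_{1,s^2}(b)$ has a component with $x$ in both tensor factors, and you read off the Ext groups from the radical-square-zero quiver algebra dual to the coalgebra. The one difference is your detour through $\gr L$ and the bound $\dim\HH^n_{g,h}(L)\le\dim\HH^n_{g,h}(\gr L)$, which is unnecessary: the formulas $\Delta(tx)=tx\otimes t+ts\otimes tx$ already hold in $L$ (your Step 3 uses them), so $L\cong\gr L$ as coalgebras and the paper works with $L^*$ directly; likewise your orientation concern is harmless here, because $s^2=s^{-2}$ in a group of order $4$.
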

	\begin{proof}
		The identity
		$\Delta(xs)=xs\otimes s+s^2\otimes xs$
		shows that $\omega_s=xs\otimes x$ is a $2$-cocycle. Since
		\[
		\Delta(tx)=tx\otimes t+ts\otimes tx
		\]
		for all $t\in G$, no coboundary has a term with $x$ in both tensor
		factors. Hence $[\omega_s]\neq0$.
		
		By the coalgebra Hochschild cohomology identification
		\eqref{eq:coalgebra-ext}
		from Section~\ref{sec:prelim}, it suffices to compute the
		corresponding Ext spaces over the dual algebra $L^*$. As a
		coalgebra, $L$ has vertices $G$ and one arrow $tx$ from $t$ to
		$ts$; hence $L^*$ is the path algebra of this quiver modulo all
		paths of length at least $2$. The projective cover of a vertex
		simple has radical the next vertex simple and radical square zero.
		Therefore the degree-two Ext space is one-dimensional exactly for
		the endpoint $s^2$ reached by two arrows, and zero otherwise. Thus
		$\HH^2_{1,s^2}(L)=\K[\omega_s]$ and
		$\HH^2_{1,t}(L)=0$ for $t\neq s^2$.
	\end{proof}

	\begin{lem}\label{lem:section4-y}
		The algebra $H$ admits generators $G,x,y$ such that
		\begin{equation}\label{eq:section4-delta-y}
			\Delta(y)=y\otimes1+s^2\otimes y+xs\otimes x,
			\qquad \epsilon(y)=0.
		\end{equation}
		The monomials $tx^iy^j$, $t\in G$, $i,j\in\{0,1\}$, form a basis.
		Moreover, $[y,L]\subseteq L$, and $L$ is preserved by every Hopf
		isomorphism between two such algebras.
	\end{lem}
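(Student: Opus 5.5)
We would first work in the associated graded Hopf algebra and then lift. Since $\dim R=4$ and $R(1)=\K x$ with trivial braiding (characteristic $2$ and $\G(H)$ a $2$-group), the Nichols subalgebra $\mathcal B(R(1))=\K[x]/(x^2)$ has dimension $2$, so $R$ strictly contains it and $R(2)\neq0$. The first task is to show $R=\K\oplus\K x\oplus\K y\oplus\K xy$ with $R(2)=\K y$, $R(3)=\K xy$. Apply the injection $R(m)/S(m)\to\HH^{2,m}_{1,1}(S)$ from Section~\ref{sec:prelim} with $S=\mathcal B(R(1))$ and $m=2$. The cohomology $\HH^{2,2}_{1,1}(\K[x]/(x^2))$ is one-dimensional, spanned by $[x\otimes x]$. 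Hence $\dim R(2)\le1$, so $R(2)=\K y$ with $\bar\Delta(y)=x\otimes x$ in the braided sense. Counting dimensions, $R$ is spanned by $1,x,y,xy$.

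Next we lift $y$ to $H$. Using Lemma~\ref{lem:base-location}, $L=\langle \G(H),x\rangle$ is eight-dimensional with $L_0=H_0$ and $\dim H=2\dim L$. By Theorem~\ref{thm:classification-index-two}, $H$ is a normalized quadratic extension of $L$, so $H=L\oplus Lw$. Lemma~\ref{lem:adapted-rank-two-basis} gives $\Delta(w)=w\otimes g'+1\otimes w+\omega$ with $\omega$ a normalized cocycle in $Z^2_{g',1}(L)$. By Lemma~\ref{lem:prelim-extension-injection} with $C=H$, $D=L\supseteq H_1$, the class $[\omega]$ in $\HH^2_{g',1}(L)$ is nonzero. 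Otherwise $w\in L$.

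I would then match this with Proposition~\ref{pro:H2-base}. That proposition is stated for supports $(1,t)$, so I first swap conventions. This can be done via the antipode, via multiplying by a group-like, or by rerunning the Lemma~\ref{lem:adapted-rank-two-basis} argument with the roles of the left and right coactions exchanged, which gives $\Delta(w)=w\otimes1+h\otimes w+\omega$. Then $h=s^2$ and $\omega=\alpha\,xs\otimes x+d^1(b)$ with $\alpha\neq0$. Setting $y=\alpha^{-1}(w-b)$ with $b\in L^+$ gives \eqref{eq:section4-delta-y} and $\epsilon(y)=0$. The basis statement follows from $H=L\oplus Ly$ and the basis $\{tx^i\}$ of $L$.

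For $[y,L]\subseteq L$, I would show the Ore datum satisfies $\sigma=\id$. By the remark after Proposition~\ref{pro:reconstruction}, $\sigma(a)=\sum\chi(a_{(1)})a_{(2)}$ with $g=1$, where $\chi=\epsilon\sigma$ is a character of $L$. This character is trivial on $G$ in characteristic $2$. It remains to show $\chi(x)=0$. This follows because $\chi(x)^2=\chi(x^2)=\epsilon\chi(x)$ (or the analogous square relation) together with the relation $[t,x]\in\K t(1+s)$, and the comparison in \eqref{eq:comp2} with $h=s^2$ should force $\chi(x)=0$. I expect this to be the fiddliest step, and I would try \eqref{eq:comp2} applied to $a=x$ first. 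With $\sigma=\id$ we get $ya=ay+\delta(a)$, so $[y,a]=\delta(a)\in L$.

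Finally, $L=\langle H_1\rangle$ is generated by a term of the coradical filtration, so by the preliminaries every Hopf isomorphism preserves it.
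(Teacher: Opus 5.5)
Your route matches the paper's up to the last step: you realize $H$ as a normalized quadratic extension of $L$, get a nonzero class from Lemma~\ref{lem:prelim-extension-injection}, move the support to $(1,t)$ by a group-like multiple, and read off $t=s^2$ and $[xs\otimes x]$ from Proposition~\ref{pro:H2-base}. You then gauge to the normal form and attack $\sigma=\id_L$ through $\chi=\epsilon\sigma$. Your first paragraph on the diagram $R$ is not needed for this lemma. If you keep it, ``counting dimensions'' also requires $xy\neq0$ in $R$, which follows from $\bar\Delta(xy)=x\otimes y+y\otimes x\neq0$.

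The genuine gap is the proof that $c:=\chi(x)$ vanishes, where $\sigma(x)=x+c$.
\begin{itemize}
\item For $s\neq1$, your tools work: \eqref{eq:comp2} at $a=x$ gives $c(s^2-s^3)=0$, so $c=0$.
\item For $s=1$, $\epsilon=0$, the identity $c^2=\epsilon c$ gives $c=0$.
\item For $s=1$, $\epsilon=1$, nothing you list excludes $c=1$. Here $L$ is commutative and cocommutative, $[t,x]=0$, and $x\mapsto x+1$ respects $x^2=x$. With $g=h=1$, \eqref{eq:comp2} is just the flip of \eqref{eq:comp1}, so it gives no information. Thus $\sigma(x)=x+1$ is a genuine winding automorphism that passes every test you name, and ``(comp2) should force $\chi(x)=0$'' is false in this case.
\end{itemize}

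What excludes $c=1$ is the cocycle-level compatibility. Apply \eqref{eq:comp3} to $a=x$ with $\omega=x\otimes x$ and $\sigma(x)=x+1$. In characteristic $2$ this gives $d^1_{1,1}(\delta(x))=x\otimes x$. That contradicts $[x\otimes x]\neq0$ in $\HH^2_{1,1}(L)$ from Proposition~\ref{pro:H2-base}; this is exactly the paper's argument.

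An equivalent fix: $[y,x]$ is primitive, because $[\Delta(y),\Delta(x)]$ picks up no contribution from $x\otimes x$. So $[y,x]\in\Pp(H)=\K x\subseteq L$ by \eqref{eq:section4-primitive-spaces}. But $\sigma(x)=x+1$ would give $[y,x]=y+\delta(x)\notin L$.

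Without one of these arguments, $\sigma=\id_L$, and hence $[y,L]\subseteq L$, is not established in the primitive-support case with $x^2=x$.
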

	\begin{proof}
		Since $H_0=L_0$ and $\dim H=2\dim L$,
		Theorem~\ref{thm:classification-index-two} shows that $H$ is a
		normalized quadratic extension of $L$. Thus there exists
		$y\in\ker\epsilon_H$ such that $\{1,y\}$ is a free left $L$-basis of
		$H$ and
		\[
		\Delta(y)=y\otimes g+h\otimes y+\omega
		\]
		for some $g,h\in\G(L)$ and $\omega\in L^+\otimes L^+$.
		
		By Lemma~\ref{lem:prelim-extension-injection}, the class of the
		coproduct correction is nonzero in $\HH^2_{g,h}(L)$. Replacing $y$ by
		$g^{-1}y$ (as in Remark~\ref{rmk:unmarked-supports}), we may assume
		$g=1$; the new second support is $g^{-1}h$, which we denote again by
		$h$. The corresponding correction still represents a nonzero class
		in $\HH^2_{1,h}(L)$. Proposition~\ref{pro:H2-base} therefore gives
		\[
		h=s^2
		\]
		and shows that this class is a nonzero scalar multiple of
		$[xs\otimes x]$. After rescaling the generator and adding an element
		of $L^+$ to remove the resulting coboundary (see
		Definition~\ref{def:gauge}), we may choose $y$ so that
		\[
		\Delta(y)=y\otimes1+s^2\otimes y+xs\otimes x.
		\]
		This proves \eqref{eq:section4-delta-y}.
		
		Since $H=L\oplus Ly$ and $\{tx^i:t\in G,\ i=0,1\}$ is a basis of
		$L$, the monomials $tx^iy^j$, $t\in G$, $i,j\in\{0,1\}$, form a
		basis of $H$.
		
		For $a\in L$, 
		\[
		ya=\sigma(a)y+\delta(a).
		\]
		We claim that $\sigma=\id_L$. Let $\chi=\epsilon\sigma$. By
		\eqref{eq:comp1} with $g=1$,
		\[
		\sigma(a)=\sum\chi(a_{(1)})a_{(2)}.
		\]
		Since every multiplicative character of $G$ is trivial,
		$\sigma(t)=t$ for all $t\in G$, while
		\[
		\sigma(x)=x+c,\qquad c=\chi(x)\in\K.
		\]
		If $s\ne1$, equation \eqref{eq:comp2} applied to $x$ gives
		\[
		c(s^2-s^3)=0,
		\]
		and hence $c=0$. If $s=1$, preservation of $x^2=\epsilon x$ gives
		$c^2=\epsilon c$. Thus $c=0$ unless $\epsilon=c=1$. In the latter
		case, equation \eqref{eq:comp3} applied to $x$ gives
		\[
		d^1_{1,1}(\delta(x))=x\otimes x,
		\]
		contradicting Proposition~\ref{pro:H2-base}. Hence $c=0$ in all
		cases, and therefore $\sigma=\id_L$. It follows that
		\[
		[y,L]\subseteq L.
		\]
		
		Finally, $L=\langle H_1\rangle$ is intrinsic, since Hopf
		isomorphisms preserve the coradical filtration and hence $H_1$.
	\end{proof}

	\medskip
	\noindent
	We now determine all possible extensions obtained by adjoining $y$.
	The classification splits according to the support $s$ of $x$.
	\medskip

	\subsection{Primitive support}
	
	\begin{pro}\label{pro:primitive-support-data}
		Let $s=1$ and let $G$ be a group of order $4$. All extensions are
		the algebras $P_G(\epsilon,\beta,\mu)$ with group relations $G$ and
		\begin{gather*}
			[t,x]=0,\qquad [t,y]=\beta(t)tx\quad(t\in G),\qquad [x,y]=0,\\
			x^2=\epsilon x,\qquad y^2=\epsilon y+\mu x,
		\end{gather*}
		where $\epsilon\in\{0,1\}$, $\mu\in\K$, and
		$\beta:G\to(\K,+)$ is a homomorphism satisfying
		\begin{equation}\label{eq:primitive-beta-condition}
			\epsilon\beta(t)\bigl(\epsilon+\beta(t)\bigr)=0\quad(t\in G).
		\end{equation}
		The coproduct is given by \eqref{eq:section4-delta-y} with $s=1$.
		Each presentation defines a Hopf algebra with the PBW basis of
		Lemma~\ref{lem:section4-y}.
	\end{pro}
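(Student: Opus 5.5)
The plan is to work inside the framework of Lemma~\ref{lem:section4-y}. With $s=1$ we have $\sigma=\id_L$ and $\Delta(y)=y\otimes1+1\otimes y+x\otimes x$. The only unknowns are the derivation $\delta=[y,-]$ on $L$ and the elements $u,v\in L$ with $y^2=u+vy$. Here $L=\K[G]\otimes\K[x]/(x^2+\epsilon x)$ is commutative, $x$ is primitive, and $\epsilon\in\{0,1\}$. We then solve the conditions of Definition~\ref{def:extension-data} and reduce by the gauge group $\Gamma_{1,1}$ of Definition~\ref{def:equivalence}.

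\textbf{Determining $\delta$.} Since $\sigma=\id$, equation \eqref{eq:comp3} reads
\[
\Delta(\delta(a))=\delta(a_{(1)})\otimes a_{(2)}+a_{(1)}\otimes\delta(a_{(2)})+[\omega,\Delta(a)],\qquad \omega=x\otimes x.
\]
For $a=t\in G$ the bracket vanishes because $L$ is commutative. Hence $\delta(t)t^{-1}$ is primitive in $L$, and by \eqref{eq:section4-primitive-spaces} it equals $\beta(t)x$ for some scalar $\beta(t)$. The derivation rule \eqref{eq:sigma-delta} then makes $\beta:G\to(\K,+)$ additive. For $a=x$ we get $\delta(x)\in\Pp(L)=\K x$, say $\delta(x)=\gamma x$. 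The gauge change $y\mapsto y+b$ alters $\delta$ by $[b,-]=0$, since $L$ is commutative. So $\gamma$ has to be killed another way, and the natural tool is the associativity equations.

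\textbf{Determining $u$ and $v$.} Equation \eqref{eq:assoc-wwa-coef} with $\sigma=\id$ gives $2\delta=0$, which holds automatically in characteristic $2$. Equation \eqref{eq:assoc-wwa-const} gives $\delta^2(a)=v\delta(a)$. Equation \eqref{eq:comp4} expands $(y\otimes1+1\otimes y+x\otimes x)^2$. Using $x^2=\epsilon x$ and $[y,x]=\gamma x$, the $y$-coefficients force $\Delta(v)=v\otimes1+1\otimes v$, so $v=\nu x$. The constant terms then force $u$ to satisfy $d^1_{1,1}(u)=\epsilon\, x\otimes x+(\text{terms involving }\gamma,\nu)$. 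By Proposition~\ref{pro:H2-base}, $x\otimes x$ is not a coboundary, so its coefficient must cancel. I expect this to give $\nu=\epsilon$ up to gauge and $u=\mu x$, and then \eqref{eq:assoc-w2-coef} and \eqref{eq:assoc-w2-const} should force $\gamma=0$. Next, applying $\delta^2=v\delta$ to $t$ gives $\beta(t)^2=\epsilon\beta(t)$ after using $\delta(x)=0$, which is exactly \eqref{eq:primitive-beta-condition}. A scalar gauge $y\mapsto y+cx$ shifts $v$ by $2cx=0$ and $u$ by $c^2x^2-v\,cx$. This residual freedom, together with rescaling, is what normalizes $v=\epsilon$, with $\epsilon\in\{0,1\}$ fixed by $L$.

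\textbf{Main obstacle and conclusion.} The delicate step is bookkeeping the $w\otimes w$, $w\otimes1$ and constant components in \eqref{eq:comp4}, where cross terms such as $(x\otimes x)(y\otimes1+1\otimes y)$ pick up $[y,x]$ contributions. My first attempt will be to expand in the free left $L\otimes L$-basis $\{1\otimes1,y\otimes1,1\otimes y,y\otimes y\}$ and compare each coefficient separately. Conversely, I will check that $P_G(\epsilon,\beta,\mu)$ satisfies all four identities of Proposition~\ref{pro:associative-data} together with \eqref{eq:comp1}--\eqref{eq:comp4}. This should be a short verification, because $\delta(t)=\beta(t)tx$, $\delta(x)=0$, $u=\mu x$ and $v=\epsilon$ all lie in the commutative algebra $L$. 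Proposition~\ref{pro:reconstruction} then yields the Hopf algebra with the PBW basis of Lemma~\ref{lem:section4-y}.
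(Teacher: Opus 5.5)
\textbf{There is a genuine gap: your treatment of $v$ is wrong, and the fix you propose cannot work.} Your framework is sound. Solving Definition~\ref{def:extension-data} with $\sigma=\id_L$ is equivalent to the paper's direct computation of $\Delta([t,y])$, $\Delta([x,y])$ and $\Delta(y^2)$ in $H$. Your values $\delta(t)=\beta(t)tx$ and $\delta(x)=\gamma x$ are also correct.

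In the free left $L\otimes L$-basis, the $y\otimes1$ coefficient of $W^2$ is just $v\otimes1$, coming from $y^2\otimes1=(u+vy)\otimes1$. The cross terms with $x\otimes x$ contribute $2(x\otimes x)=0$ there. On the right side of \eqref{eq:comp4} the same coefficient is $\Delta(v)$, and similarly for $1\otimes y$. Hence
\[
\Delta(v)=v\otimes1=1\otimes v,
\]
so $v=\nu 1$ is a scalar, not a primitive element.

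Your version cannot be repaired by normalization. Every gauge $T_{\alpha,b}$ sends $v$ to $\alpha v+\sigma(b)+b=\alpha v$, so neither $y\mapsto y+cx$ nor rescaling turns $\nu x$ into $\epsilon$. Worse, with $v=\nu x$ you would get $\Delta(v)(x\otimes x)=2\nu\epsilon\,x\otimes x=0$. The constant terms would then read $d^1_{1,1}(u)=\epsilon\,x\otimes x$, and Proposition~\ref{pro:H2-base} would force $\epsilon=0$, wrongly excluding $P_G(1,\beta,\mu)$.

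The correct step is as follows. The cross terms cancel in total (they give $2\gamma\,x\otimes x=0$, so $\gamma$ does not appear). The constant terms give $d^1_{1,1}(u)=(\epsilon+\nu)\,x\otimes x$. Since $[x\otimes x]\neq0$, this yields $\nu=\epsilon$ exactly and $u\in\Pp(L)=\K x$. No gauge is used anywhere in this proposition; the statement does not normalize $\mu$.

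\textbf{Your source for $\gamma=0$ also fails.} It does not follow from \eqref{eq:assoc-w2-const} and \eqref{eq:assoc-w2-coef}. With $u=\mu x$ and $v=\epsilon$ these reduce to $\mu\gamma=0$ and a tautology, so they say nothing when $\mu=0$. The missing idea is that $\delta$ must respect the relation $x^2=\epsilon x$ of $L$. The Leibniz rule gives $\delta(x^2)=2\gamma x^2=0$, while $\delta(\epsilon x)=\epsilon\gamma x$, so $\epsilon\gamma=0$. Then \eqref{eq:assoc-wwa-const} at $a=x$ gives $\gamma^2=\epsilon\gamma=0$. This is precisely the paper's argument for $\rho=0$.

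\textbf{There is also a slip in the $\beta$-condition.} In fact $\delta^2(t)=\beta(t)^2tx^2=\epsilon\beta(t)^2\,tx$, so \eqref{eq:assoc-wwa-const} at $a=t$ gives $\epsilon\beta(t)^2=\epsilon\beta(t)$. This is \eqref{eq:primitive-beta-condition}. Your $\beta(t)^2=\epsilon\beta(t)$ would force $\beta=0$ when $\epsilon=0$, and would lose $\mathcal N_1^{(4)}(\lambda)$ and $\mathcal P(t,u)$.

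With these three corrections, your converse verification via Proposition~\ref{pro:reconstruction} goes through as you describe.
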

	\begin{proof}
		The coproduct gives $[x,y]=\rho x$ and
		$[t,y]=\beta(t)tx$, since these elements belong to
		$\Pp(H)$ and $\Pp_{t,t}(H)$ respectively.
		The group relations imply additivity of $\beta$.
		The square coproduct gives $y^2=\epsilon y+\mu x$.
		Applying the derivation $\delta=[y,-]$ to $x^2=\epsilon x$ gives
		$\epsilon\rho=0$, and comparing $\delta^2(x)$ with
		$[\mu x,x]+\epsilon\delta(x)$ gives
		$\rho^2+\epsilon\rho=0$. Hence $\rho=0$.
		For $t\in G$,
		\[
		\delta^2(t)+\epsilon\delta(t)
		=\epsilon\beta(t)\bigl(\beta(t)+\epsilon\bigr)tx.
		\]
		This proves \eqref{eq:primitive-beta-condition}.
		
		Conversely, $\delta(t)=\beta(t)tx$, $\delta(x)=0$ defines a derivation
		of $L$. With $u=\mu x$, $v=\epsilon$, it satisfies
		$\delta^2=[u,-]+v\delta$ and $\delta(u)=\delta(v)=0$ exactly under
		\eqref{eq:primitive-beta-condition}. The coproduct identities follow by
		substitution on $t,x,y$. Proposition~\ref{pro:reconstruction} gives
		the Hopf algebra and its dimension. Its graded coalgebra in degrees
		$0,1,2,3$ has the basis described in Lemma~\ref{lem:section4-y}, so
		its infinitesimal braiding is one-dimensional.
	\end{proof}
	
	\begin{pro}\label{pro:primitive-support-isomorphisms}
		Two algebras $P_G(\epsilon,\beta,\mu)$ and
		$P_G(\epsilon',\beta',\mu')$ are isomorphic if and only if there exist
		$\theta\in\Aut(G)$, $A\in\K^\times$, and $B\in\K$ such that
		\begin{gather*}
			\epsilon=A\epsilon',\qquad
			\beta(t)=A\beta'(\theta(t))\quad(t\in G),\\
			A\mu=A^4\mu'+\epsilon'B^2+\epsilon B.
		\end{gather*}
		The corresponding map is
		$t\mapsto\theta(t)$, $x\mapsto Ax'$, $y\mapsto A^2y'+Bx'$.
	\end{pro}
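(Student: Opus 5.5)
Any isomorphism $\Phi:P_G(\epsilon,\beta,\mu)\to P_G(\epsilon',\beta',\mu')$ preserves the coradical filtration. Hence it restricts to a group isomorphism $\theta$ on $G=\G(H)$. It also preserves $L=\langle H_1\rangle$ by Lemma~\ref{lem:section4-y}. Since $s=1$, Lemma~\ref{lem:base-location} gives $\Pp(H)=\K x$ and $\Pp(H')=\K x'$, so $\Phi(x)=Ax'$ with $A\in\K^\times$. The restriction $\psi=\Phi|_L$ is a Hopf isomorphism with $\psi(1)=1$, so the support $(1,1)$ of $y$ is fixed. Lemma~\ref{lem:generator-rigidity} then gives $\Phi(y)=\alpha y'+b$ with $\alpha\in\K^\times$ and $b\in L^+$.

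Next I pin down $\alpha$ and $b$ from the coproduct, using $\Delta(y)=y\otimes1+1\otimes y+x\otimes x$. Applying $\Phi$ yields
\[
\alpha\,x'\otimes x'+d^1_{1,1}(b)=A^2\,x'\otimes x'.
\]
By Proposition~\ref{pro:H2-base}, $x'\otimes x'$ is not a coboundary. Hence $\alpha=A^2$ and $d^1_{1,1}(b)=0$, so $b$ is primitive and $b=Bx'$ for some $B\in\K$. The map therefore has the stated form $t\mapsto\theta(t)$, $x\mapsto Ax'$, $y\mapsto A^2y'+Bx'$.

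It remains to see which such assignments respect the relations of Proposition~\ref{pro:primitive-support-data}. The relations $[t,x]=0$ and $[x,y]=0$ are automatic, because $x'$ commutes with $x'$ and $y'$ and is central.
\begin{itemize}
\item From $x^2=\epsilon x$ we need $A^2x'^2=\epsilon Ax'$, i.e.\ $A^2\epsilon'=A\epsilon$, which is $\epsilon=A\epsilon'$. Since $\epsilon,\epsilon'\in\{0,1\}$, when $\epsilon=1$ this forces $A=1$.
\item From $[t,y]=\beta(t)tx$ we need $[\theta(t),A^2y'+Bx']=A^2\beta'(\theta(t))\theta(t)x'$ to equal $\beta(t)\theta(t)Ax'$. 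This gives $\beta(t)=A\beta'(\theta(t))$.
\item From $y^2=\epsilon y+\mu x$, using $[x',y']=0$ and characteristic $2$, the left side is
\[
(A^2y'+Bx')^2=A^4(\epsilon'y'+\mu'x')+B^2\epsilon'x'.
\]
The right side is $\epsilon(A^2y'+Bx')+\mu Ax'$. The $y'$-coefficients give $A^4\epsilon'=\epsilon A^2$, which is consistent with $\epsilon=A\epsilon'$. The $x'$-coefficients give $A^4\mu'+\epsilon'B^2=\epsilon B+A\mu$, which is the stated condition in characteristic $2$.
\end{itemize}

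Conversely, given $\theta,A,B$ satisfying these conditions, the assignment respects all defining relations, so it defines an algebra map. It is a coalgebra map on generators by the computation above, hence a Hopf map. It is bijective because it sends the PBW basis of Lemma~\ref{lem:section4-y} to a triangular transform of the PBW basis.

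The main obstacle I expect is the rigidity step: showing that $b$ has no group-like or $tx$ components. This is exactly where $d^1_{1,1}(b)=0$ and $\Pp(H')=\K x'$ are used, via Proposition~\ref{pro:H2-base} and the Taft–Wilson description in \eqref{eq:section4-primitive-spaces}.
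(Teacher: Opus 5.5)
Your proposal is correct and follows the same route as the paper. You get $\theta$ from the group-likes, $\Phi(x)=Ax'$ from $\Pp(H')=\K x'$, $\Phi(y)=A^2y'+Bx'$ from the coproduct of $y$, and the three equations from $x^2$, $[t,y]$ and $y^2$, with the converse from the triangular action on the PBW basis. The only difference is that you make the coproduct step explicit via Lemma~\ref{lem:generator-rigidity} and the nontriviality of $[x'\otimes x']$ in $\HH^2_{1,1}(L')$, which the paper compresses into a single sentence.
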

	\begin{proof}
		An isomorphism induces $\theta$ on the group-like elements.
		Since $\Pp(H')=\K x'$, its value on $x$ is $Ax'$.
		The coproduct of $y$ then forces $A^2y'+Bx'$ as its image.
		The square of $x$, the commutators with $G$, and the square of $y$
		give the displayed equations. If $\epsilon\ne0$, the first equation
		forces $\epsilon=\epsilon'=A=1$; otherwise both epsilons are zero.
		Thus the coefficient of $y'$ in the square equation also agrees.
		Conversely these equations preserve every relation and the coproduct;
		the triangular map on the PBW basis is invertible.
	\end{proof}
	
	\begin{lem}\label{lem:case-m=0}
		For $G=\Z_4=\langle g\rangle$ and $s=1$, representatives are
		\[
		P_G(0,0,0),\quad P_G(0,0,1),\quad
		P_G(1,0,0),\quad P_G(1,\beta_1,0),\quad
		\mathcal N_1^{(4)}(\lambda):=P_G(0,\beta_1,\lambda),
		\]
		where $\beta_1(g)=1$ and $\lambda\in\K$.
		They are pairwise non-isomorphic, and
		$\mathcal N_1^{(4)}(\lambda)\cong\mathcal N_1^{(4)}(\lambda')$
		if and only if $\lambda=\lambda'$.
	\end{lem}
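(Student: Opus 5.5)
We specialize Propositions~\ref{pro:primitive-support-data} and \ref{pro:primitive-support-isomorphisms} to $G=\Z_4=\langle g\rangle$. A homomorphism $\beta:\Z_4\to(\K,+)$ is determined by $b=\beta(g)\in\K$, subject to $4b=0$. This holds automatically in characteristic $2$, so $\beta=b\beta_1$ with $b\in\K$ arbitrary. Condition \eqref{eq:primitive-beta-condition} has to be checked only at $t=g$, since $\beta(g^2)=2b=0$ and $\beta(g^3)=3b=b$; it reads $\epsilon b(\epsilon+b)=0$. The data are therefore triples $(\epsilon,b,\mu)$. When $\epsilon=0$, the values $b,\mu\in\K$ are arbitrary; when $\epsilon=1$, we need $b\in\{0,1\}$ and $\mu\in\K$. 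Finally, $\Aut(\Z_4)=\{\id,\ g\mapsto g^3\}$, and both automorphisms satisfy $\beta_1(\theta(g))=\beta_1(g)=1$. Hence $\theta$ plays no role, and the isomorphism criterion reduces to
\[
\epsilon=A\epsilon',\qquad b=Ab',\qquad A\mu=A^4\mu'+\epsilon'B^2+\epsilon B .
\]

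Next we normalize each case using these equations.

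\emph{Case $\epsilon=1$.} Here $A=1$ and the third equation becomes $\mu=\mu'+B^2+B$. Since $\K$ is algebraically closed, $B^2+B=\mu-\mu'$ is always solvable, so $\mu$ can be taken to be $0$. This gives $P(1,0,0)$ and $P(1,\beta_1,0)$. They are distinct because $b=b'$ is forced.

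\emph{Case $\epsilon=0$, $b=0$.} The conditions become $A\mu=A^4\mu'$ with $B$ free. Thus $\mu=0$ is one class, and any $\mu\ne0$ can be rescaled to $1$ by a suitable $A$ with $A^3=\mu$. This gives $P(0,0,0)$ and $P(0,0,1)$.

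\emph{Case $\epsilon=0$, $b\neq0$.} Taking $A=b$ normalizes $b'=1$. Once $b=b'=1$, we must have $A=1$, so the third equation forces $\mu=\mu'$. This gives the family $\mathcal N_1^{(4)}(\lambda)$, with $\lambda$ a complete invariant.

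For pairwise non-isomorphism across the cases, we use invariants read off from the equations. The value of $\epsilon$ is an invariant, because $\epsilon=A\epsilon'$ with $\epsilon,\epsilon'\in\{0,1\}$. Whether $b=0$ is also an invariant. Within $\epsilon=0$, $b=0$, whether $\mu=0$ is an invariant.

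The only step requiring care is showing that $\mu$ cannot be moved in the family once $b$ is normalized. In particular we must make sure that $B$ does not enter: it doesn't, since the term $\epsilon' B^2+\epsilon B$ vanishes when $\epsilon=\epsilon'=0$, and $\theta$ cannot rescale $\beta_1$. Every other step is immediate from Proposition~\ref{pro:primitive-support-isomorphisms}.
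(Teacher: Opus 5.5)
Your proposal is correct and follows essentially the same route as the paper. Like the paper, you specialize Propositions~\ref{pro:primitive-support-data} and \ref{pro:primitive-support-isomorphisms} to $\Z_4$, note that $\Aut(\Z_4)$ fixes $\beta_1$, and normalize in three ways: by a cube root when $\epsilon=\beta=0$, by rescaling when $\beta(g)\neq0$, and by the Artin--Schreier substitution $y\mapsto y+Bx$ when $\epsilon=1$, before reading off all non-isomorphisms from the reduced criterion. Your version just spells out details the paper leaves implicit, such as that $A=1$ is forced once $b=b'=1$ and that $B$ then cannot move $\lambda$.
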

	\begin{proof}
		The homomorphism $\beta$ is determined by $\beta(g)$, which is
		unchanged by every automorphism of $\Z_4$. If $\epsilon=0$ and
		$\beta=0$, a nonzero $\mu$ can be normalized to $1$ by a cube root.
		If $\beta(g)\ne0$, rescale $x,y$ with weights $1,2$ to make
		$\beta(g)=1$; the remaining parameter is $\lambda$.
		If $\epsilon=1$, \eqref{eq:primitive-beta-condition} gives
		$\beta(g)\in\{0,1\}$, and $y\mapsto y+Bx$ removes $\mu$ by
		surjectivity of $B\mapsto B^2+B$.
		Proposition~\ref{pro:primitive-support-isomorphisms} proves all the
		non-isomorphism assertions.
	\end{proof}
	
	\subsection{Nontrivial support over the cyclic group}
	
	\begin{lem}\label{lem:case-m=1}
		For $G=\Z_4=\langle g\rangle$ and $s=g$, there are exactly two
		isomorphism classes, represented by $\mathcal A_a$, $a\in\{0,1\}$:
		\begin{gather*}
			g^4=1,\qquad x^2=ax,\qquad y^2=ay,\qquad [x,y]=0,\\
			[g,x]=a g(1+g),\qquad
			[g,y]=a(xg^2+xg+g^2+g^3).
		\end{gather*}
		Their coproduct is \eqref{eq:section4-delta-y} with $s=g$.
	\end{lem}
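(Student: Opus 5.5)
We work over the base $L$ of Definition~\ref{def:base-4dim}(2), $G=\Z_4$, $s=g$, with $[g,x]=ag(1+g)$ and $x^2=ax$, $a\in\{0,1\}$. By Lemma~\ref{lem:section4-y}, $H$ is a normalized quadratic extension of $L$ with $\sigma=\id_L$. The generator satisfies $\Delta(y)=y\otimes1+g^2\otimes y+xg\otimes x$, and $\delta=[y,-]$ is an ordinary derivation of $L$. Write $y^2=u+vy$. Condition~\eqref{eq:assoc-wwa-coef} with $\sigma=\id$ reads $2\delta(a)=[v,a]$, i.e.\ $v$ is central in $L$. The remaining data are then governed by \eqref{eq:comp3}, \eqref{eq:comp4} and the associativity identities of Proposition~\ref{pro:associative-data}.

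\emph{Step 1: the commutators.} Apply \eqref{eq:comp3} to $a=x$ and $a=g$. For $a=g$ it says
\[
d^1_{g,g^3}(\delta(g)) \text{ equals } \omega\Delta(g)-\Delta(g)\omega ,
\]
with $\omega=xg\otimes x$. Since $[xg,g]=ag^2(1+g)$ and $[x,g]=ag(1+g)$, the right side is an explicit element, and we solve for $\delta(g)$ in the basis $\{tx^j\}$, using the skew-primitive spaces \eqref{eq:section4-primitive-spaces}. For $a=0$ the right side vanishes, so $\delta(g)\in\Pp_{g,g^3}(L)=\K g(1-g^2)$. For $a=1$ we expect a particular solution $xg^2+xg+g^2+g^3$ plus a multiple of $g+g^3$. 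The case $a=x$ similarly forces $[x,y]\in\Pp_{1,g^3}$, which is zero modulo group terms, consistent with $[x,y]=0$. Gauge freedom $y\mapsto \alpha y+b$ with $b\in L^+$ then removes the skew-primitive ambiguities. In the $a=0$ case a suitable $b\in\K(1+g)x$ or a group-like combination kills $\lambda g(1+g^2)$; note that $[g,b]$ must produce $g(1+g^2)$, and $b=\gamma x$ does so only if $a\neq0$. So for $a=0$ we must check that rescaling together with \eqref{eq:assoc-wwa-const} forces the coefficient to vanish. I expect the associativity/square compatibility to kill it: $\delta^2(g)=[u,g]+v\delta(g)$.

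\emph{Step 2: the square.} Compute $\Delta(y)^2$. The cross terms give $(xg\otimes x)^2=xgxg\otimes x^2$ and $y$-mixing terms, and matching with $\Delta(u)+\Delta(v)W$ forces $v\in\K$, with $v=a$ after normalization (coefficient of $xg\otimes x$ and of $g^2\otimes y$). The element $u$ is then determined modulo $\Pp_{1,g^4}=\Pp(L)=0$ plus constants, and $\epsilon(u)=0$ gives $u=0$, i.e.\ $y^2=ay$. The gauge $y\mapsto y+b$ must be used consistently here. This is the main obstacle: tracking how the remaining gauge parameters act simultaneously on $\delta(g)$, $\delta(x)$ and $u$ (the formulas $\delta'=\delta+[b,-]$ and $u'=u+\delta(b)+b^2+v'b$ from Definition~\ref{def:gauge}). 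I would first attempt $b\in\operatorname{span}\{1+g^2,\,x(1+g^2)\}$, which is precisely what can preserve the coproduct shape.

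\emph{Step 3: existence and non-isomorphism.} Existence is by checking that the tuples satisfy Definition~\ref{def:extension-data} and invoking Proposition~\ref{pro:reconstruction}. For non-isomorphism: $a$ is an invariant of $L=\langle H_1\rangle$, which is intrinsic, since $\mathcal A_0$ has $x^2=0$ while $\mathcal A_1$ has the non-cocommutative $L$ with $x^2=x$. Hence there are exactly two classes.
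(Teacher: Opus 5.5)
There is a genuine gap. Your setup is fine: $\sigma=\id_L$, $v$ is central, and \eqref{eq:comp3} at $a=g$ determines $[g,y]$ modulo $\Pp_{g,g^3}(L)=\K g(1+g^2)$. But the two free parameters are never determined, and the mechanisms you propose for them fail. Write $[g,y]=a(xg^2+xg+ag^2+ag)+d\,g(1+g^2)$ with $d\in\K$ free.

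First, your use of \eqref{eq:comp3} at $a=x$ drops a term. Since $\Delta(x)=x\otimes1+g\otimes x$, the sum $\sum\delta(x_{(1)})\otimes x_{(2)}$ contains $\delta(g)\otimes x$, and one gets
\[
\Delta([x,y])-[x,y]\otimes1-g^3\otimes[x,y]=(a^2+d)(g+g^3)\otimes x=d^1_{1,g^3}\bigl((a^2+d)x\bigr).
\]
Hence $[x,y]=\rho x+\lambda(1+g^3)$ with $\rho=a^2+d$. So $[x,y]\in\Pp_{1,g^3}(H)$ holds only once $d=a$ is known, which is what has to be shown. This coupling of $[g,y]$ with $[x,y]$ is the missing idea.

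Second, no gauge change absorbs $d$ or $\lambda$. Keeping $\omega=xg\otimes x$ forces $b\in\Pp_{1,g^2}(L)=\K(1+g^2)$, which is central in $L$ and so leaves $\delta$ unchanged. Rescaling $y$ must be compensated by $x\mapsto Ax$, and this only multiplies $d,\lambda$ by nonzero scalars. A choice $b\in\K x$ is not allowed, since it changes $\omega$ by $(g+g^2)\otimes x$.

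Third, the identity you hope kills $d$ when $a=0$, namely $\delta^2(g)=[u,g]+v\delta(g)$, is vacuous there: $L$ is commutative, $v=0$, and $\delta^2(g)=d(1+g^2)\delta(g)=0$.

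What is needed is associativity on $x$ and on $y^3$.
\begin{itemize}
\item The $y\otimes1$ and $1\otimes y$ components of \eqref{eq:comp4} give $v\in\K$ and $\delta(g^2)=v(1+g^2)$. Hence $v=a$, which is forced rather than a normalization.
\item Applying $\delta$ to $x^2=ax$ gives $a\rho=0$.
\item For $a=0$, the overlap $y^2x$, i.e.\ \eqref{eq:assoc-wwa-const} at $x$, reads $\delta^2(x)=0$, and its $x$-coefficient is $\rho^2$.
\end{itemize}
Hence $\rho=0$, i.e.\ $d=a$, in both cases.

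Since $\Pp(L)=0$, the remaining part of \eqref{eq:comp4} then determines $u=\lambda x(1+g^3)$. Your inference in Step~2 that $\epsilon(u)=0$ gives $u=0$ is a non sequitur, because $\epsilon(u)=0$ for every $\lambda$. The parameter $\lambda$ is eliminated only by the overlap $y^3$, i.e.\ \eqref{eq:assoc-w2-const}. With $v$ scalar this reads $\delta(u)=0$, whereas reducing gives $\delta(u)=\lambda^2(1+g^2)$, so $\lambda=0$.

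Finally, separate the two classes by commutativity: $\mathcal A_0$ is commutative, while $[g,x]\neq0$ in $\mathcal A_1$. Both bases are non-cocommutative, so that is not the distinguishing feature.
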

	\begin{proof}
		Normalize the base as in Definition~\ref{def:base-4dim}.
		The coproduct of $[g,y]$ first gives
		\[
		[g,y]=a(xg^2+xg+ag^2+ag)+d g(1+g^2).
		\]
		The coproduct of $[x,y]$ then gives
		$[x,y]=\rho x+\lambda(1+g^3)$, where $\rho=a^2+d$.
		Comparison in the square coproduct determines the square modulo
		$\Pp(H)=0$ and gives
		\[
		y^2=ay+\lambda x(1+g^3)+a\rho x.
		\]
		Applying $[y,-]$ to $x^2=ax$ gives $a\rho=0$.
		If $a=0$, the coefficient of $x$ in the overlap $y^2x$ is $\rho^2$,
		so $\rho=0$ in this case as well. Therefore $d=a^2=a$, and
		\begin{gather*}
			[g,y]=a(xg^2+xg+g^2+g^3),\qquad
			[x,y]=\lambda(1+g^3),\\
			y^2=ay+\lambda x(1+g^3).
		\end{gather*}
		Put $u=\lambda x(1+g^3)$ and $\delta=[y,-]$ on $L$.
		Reducing with the base relations and the displayed commutators gives
		\[
		\delta(u)=\lambda^2(1+g^2).
		\]
		The overlap $y^3$, equivalently $\delta(u)=0$ in
		Proposition~\ref{pro:associative-data}, forces $\lambda=0$, since the
		group elements $1,g^2$ are linearly independent in $H$.
		
		For $\lambda=0$, the displayed values of $\delta(g)$ and $\delta(x)=0$
		preserve the relations of $L$ and satisfy $\delta^2=a\delta$.
		With $u=0$, $v=a$, all the reconstruction equations hold. This proves
		existence and the PBW basis. The algebra $\mathcal A_0$ is commutative,
		whereas $[g,x]\ne0$ in $\mathcal A_1$; they are not isomorphic.
	\end{proof}

	\begin{lem}\label{lem:case-m=2theta}
		For $G=\Z_4=\langle g\rangle$ and $s=g^2$, all extensions have
		presentations $C(r,\lambda)$:
		\begin{gather*}
			g^4=1,\quad x^2=0,\quad [g,x]=r gq,\quad
			[g,y]=r xgq+r^2gq,\\
			[x,y]=\lambda q,\qquad y^2=\lambda xq,
			\qquad q=1+g^2,
		\end{gather*}
		with $r,\lambda\in\K$ and coproduct
		\eqref{eq:section4-delta-y} for $s=g^2$.
		Representatives are $C(0,0)$, $C(0,1)$, and
		$C(1,\lambda)= \cL(\lambda)$, $\lambda\in\K$.
	\end{lem}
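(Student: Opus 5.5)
We follow the pattern of Lemma~\ref{lem:case-m=1}. Start from the base $L$ of Definition~\ref{def:base-4dim}(3), with $[g,x]=rgq$, $x^2=0$, $q=1+g^2$. By Lemma~\ref{lem:section4-y} we have $\sigma=\id_L$, $\delta=[y,-]$ and $y^2=u+vy$. The computation runs through the coproducts of $[g,y]$, $[x,y]$ and $y^2$ in that order.

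First, compute $\Delta([g,y])$ from \eqref{eq:section4-delta-y} with $s=g^2$. Subtract the contribution coming from $[g,x]=rgq$. The remainder is $(g,g^3)$-skew-primitive, so by \eqref{eq:section4-primitive-spaces} (translated by $g$) it lies in $\K g(1+g^2)$. This gives
\[
[g,y]=r\,xgq+c\,gq
\]
for some $c\in\K$. Next, $\Delta([x,y])$ minus the correction terms lands in $\Pp_{1,g^4}=\Pp_{1,1}=0$ after subtracting lower terms. In characteristic $2$ the terms $g^2x\otimes x$ and $xg^2\otimes x$ should cancel up to a multiple of $q\otimes(\cdot)$, and I expect this to give
\[
[x,y]=\rho x+\lambda q.
\]
Similarly, $\Delta(y^2)=y^2\otimes1+1\otimes y^2+(\text{terms in }x,xq,q)$, since $s^4=1$. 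Matching these terms should force $v=0$ or $v\in\K$ together with $u\in\K x+\K xq+\K q$. We then pin down the coefficients against the candidate $y^2=\lambda xq+(\ldots)$.

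Second, impose the associativity identities of Proposition~\ref{pro:associative-data}. Apply $\delta$ to $x^2=0$, to $[g,x]=rgq$ and to $g^4=1$, and use $\delta^2(a)=[u,a]+v\delta(a)$ on $a=g,x$ together with $\delta(u)=0$, $\delta(v)=0$. These are the same moves as in Lemma~\ref{lem:case-m=1}. I expect them to give $\rho=0$, $c=r^2$, $v=0$ and $u=\lambda xq$. For example, $\delta(x^2)=x[x,y]+[x,y]x=\rho\cdot 2x^2+\lambda(xq+qx)$, and this should vanish using $[q,x]=0$ ($q$ commutes with $x$ since $[g^2,x]=0$ in characteristic $2$). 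The main obstacle is exactly this bookkeeping. The coproduct comparison leaves several free scalars, and the relevant constraints come from overlaps such as $y^2g$ and $y^3$; the latter means checking $\delta(\lambda xq)=\lambda^2 q\cdot q=0$, using $q^2=0$ in characteristic $2$. I would do these reductions explicitly on the PBW basis $tx^iy^j$. Conversely, for every $(r,\lambda)$ the resulting data satisfy Definition~\ref{def:extension-data}, so Proposition~\ref{pro:reconstruction} gives existence.

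Third, for isomorphisms we use that $L$ is intrinsic and Lemma~\ref{lem:generator-rigidity}. An isomorphism has the form $g\mapsto g^{\pm1}$, $x\mapsto Ax+Bq'$ and $y\mapsto A^2y+b$ with $b\in L^+$, which is the gauge action of Definition~\ref{def:equivalence}. Rescaling $x\mapsto Ax$, $y\mapsto A^2y$ sends $r\mapsto Ar$ and $\lambda\mapsto A^3\lambda$. So if $r=0$, then $\lambda$ normalizes to $0$ or $1$ using a cube root. If $r\ne0$, we normalize $r=1$, which uses up $A$, and $\lambda$ remains. We must still check that the shifts $y\mapsto y+b$, with $b$ in the span of $xq$, $gq$, $x$ and similar elements, and $g\mapsto g^3$ do not move $\lambda$ when $r=1$. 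I would verify this by recomputing $[x,y']$ modulo $q$-terms. Finally, $C(0,\ast)$ and $C(1,\ast)$ are distinguished by whether $[g,x]=0$.
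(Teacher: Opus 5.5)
Your plan has a genuine gap in the first step, and it comes from misreading the support of $y$. For $s=g^2$ one has $s^2=1$, so $\Delta(y)=y\otimes1+1\otimes y+xg^2\otimes x$. Hence $[g,y]$ is $(g,g)$-skew-primitive up to corrections, not $(g,g^3)$. Using $gx=xg+r\,gq$ and $g^3q=gq$, the correction term in $\Delta([g,y])$ is
\[
r\,(xg^3\otimes gq+gq\otimes xg)+r^2\,gq\otimes gq=d^1_{g,g}\bigl(r\,xgq+r^2\,gq\bigr).
\]
Therefore $[g,y]-r\,xgq-r^2gq\in\Pp_{g,g}(H)=g\,\Pp(H)=0$, because $s\ne1$. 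So the coefficient of $gq$ is already fixed by the coproduct.

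You instead keep $[g,y]=r\,xgq+c\,gq$ with $c$ free and expect associativity to give $c=r^2$. It does not. The element $gq$ is central in $H$ (in particular $\delta(gq)=0$), so $c$ drops out of every overlap condition:
\begin{itemize}
\item $\delta(g^4)=0$;
\item $\delta([g,x]-rgq)=r\rho\,gq$;
\item $\delta^2(g)=[u,g]$, which reads $r\rho\,xgq=0$;
\item $\delta(u)=0$.
\end{itemize}
Carried out as proposed, your necessity argument would leave a spurious parameter $c$. What removes it is \eqref{eq:comp3} at $a=g$, i.e.\ $d^1_{g,g}(gq)=gq\otimes gq\ne0$.

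The same support confusion affects $[x,y]$. The element $x$ has support $(1,g^2)$ and $y$ has support $(1,1)$. The correction terms vanish by $x^2=0$, $[g^2,x]=0$ and $[g^2,y]=0$. So $[x,y]\in\Pp_{1,g^2}(H)=\K q\oplus\K x$, and this is where $\rho x+\lambda q$ comes from. Your stated $\Pp_{1,1}(H)=0$ would instead force $[x,y]=0$.

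For the square, the correction in $\Delta(y^2)$ is $\lambda(q\otimes x+xg^2\otimes q)=d^1_{1,1}(\lambda xq)$, since the $\rho$-terms cancel. Hence $y^2-\lambda xq\in\Pp(H)=0$, which gives $v=0$ and $u=\lambda xq$ outright. Then $\delta^2(x)=[u,x]$ (the overlap $y^2x$) reads $\rho^2x+\rho\lambda q=0$, so $\rho=0$. The overlaps $y^2g$ and $y^3$ you name only give $r\rho=0$ and $\lambda\rho=0$.

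With these repairs, the rest of your plan is the paper's argument: reconstruction for the converse, and the rescaling $x\mapsto Ax$, $y\mapsto A^2y$ acting by $(r,\lambda)\mapsto(Ar,A^3\lambda)$. The invariance of $\lambda$ for $r=1$ is not needed for this lemma; it is proved in Proposition~\ref{pro-iso-Zn}.
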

	\begin{proof}
		Here $x^2=0$ and $\Pp(H)=0$. Subtracting
		$r xgq+r^2gq$ from $[g,y]$ gives a $(g,g)$-primitive element,
		which is zero. The coproduct of $[x,y]$ gives
		$[x,y]=\rho x+\lambda q$. Comparison in the square coproduct gives
		$y^2=\lambda xq$, while the overlap $xy^2$ gives $\rho^2=0$.
		Thus $\rho=0$.
		
		Conversely set $\delta(g)=r xgq+r^2gq$, $\delta(x)=\lambda q$,
		$u=\lambda xq$, $v=0$. The identities $q^2=0$ and $[q,x]=0$
		give $\delta(q)=0$, $\delta(u)=0$, and $\delta^2=[u,-]$.
		Applying $\delta$ to $g^4-1$, $x^2$, and $[g,x]-rgq$ gives zero.
		The coproduct identities on $g,x$ and the square relation hold by
		substitution. Reconstruction gives dimension $16$.
		Under $x\mapsto A x$, $y\mapsto A^2y$, the parameters change to
		$Ar$ and $A^3\lambda$. Hence $r\ne0$ can be normalized to $1$,
		and when $r=0$ a nonzero $\lambda$ can be normalized to $1$.
	\end{proof}
	
	\begin{pro}\label{pro-iso-Zn}
		The cyclic-group representatives in Lemmas~\ref{lem:case-m=0},
		\ref{lem:case-m=1}, and \ref{lem:case-m=2theta} are pairwise
		non-isomorphic, except that equal parameters give the same algebra.
		In particular
		\[
		\cL(\lambda)\cong\cL(\lambda')
		\quad\Longleftrightarrow\quad\lambda=\lambda'.
		\]
	\end{pro}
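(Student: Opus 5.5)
The plan is to first separate the representatives by invariants that any Hopf isomorphism must preserve, and then handle the identifications within each family using the rigidity of generators. Any Hopf isomorphism $\Phi:H\to H'$ restricts to an isomorphism of group-like groups. It also preserves the coradical filtration, hence $H_1$ and $L=\langle H_1\rangle$ (Lemma~\ref{lem:section4-y}). Since all groups here are $\Z_4$, $\Phi$ maps $\Pp_{1,s}(H)$ onto $\Pp_{1,\theta(s)}(H')$. By \eqref{eq:section4-primitive-spaces}, the order of the support $s\in\{1,g,g^2\}$ is an invariant: $s$ is the unique $t$ with $\Pp_{1,t}(H)\ne\K(1-t)$. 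Therefore algebras from Lemmas~\ref{lem:case-m=0}, \ref{lem:case-m=1}, and \ref{lem:case-m=2theta} are mutually non-isomorphic. Within $s=1$, Proposition~\ref{pro:primitive-support-isomorphisms} already gives everything, as used in Lemma~\ref{lem:case-m=0}. For $s=g$, commutativity separates $\mathcal A_0$ from $\mathcal A_1$.

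The main work is the case $s=g^2$, the family $C(r,\lambda)$. Let $\Phi:C(r,\lambda)\to C(r',\lambda')$. Then $\Phi(g)=g^{\pm1}$, since $\Phi$ fixes the support $g^2$. As $\Pp_{1,g^2}=\K(1+g^2)\oplus\K x$, we get $\Phi(x)=Ax'+c(1+g^2)$ with $A\ne0$. By Lemma~\ref{lem:generator-rigidity}, applied after identifying $L$ (the supports are fixed because $\theta(g^2)=g^2$), we get $\Phi(y)=\alpha y'+b$ with $b\in L^+$. Comparing the coproduct correction $xs\otimes x$ forces $\alpha=A^2$, modulo a coboundary absorbed in $b$. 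First, I would distinguish $r=0$ from $r\neq0$ intrinsically: $r=0$ if and only if $L$ is commutative, which is invariant. Second, within $r=0$, I would separate $\lambda=0$ from $\lambda\ne0$ by commutativity of $H$.

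For $r=r'=1$, I would impose the relations on the images. First I would apply $[\Phi(g),\Phi(x)]=\Phi(gq)$, where $q=1+g^2$ is fixed by $g\mapsto g^{\pm1}$. This gives $A\,[g^{\pm1},x']=g^{\pm1}q$. Computing $[g^{-1},x']$ from $[g,x']=gq$ yields $[g^{-1},x']=g^{-1}q$ (since $q$ is central and $q^2=0$), so $A=1$ in both cases. The term $c(1+g^2)$ commutes with $g$, so it is harmless.

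Next, I would apply $\Phi$ to $[x,y]=\lambda q$, with $\Phi(x)=x'+cq$ and $\Phi(y)=y'+b$. This gives
\[
[x'+cq,\,y'+b]=\lambda' q+[x',b]+c[q,y']+c[q,b].
\]
Here $[q,y']=\delta'(q)$-type terms vanish by Lemma~\ref{lem:case-m=2theta} ($\delta(q)=0$). Also, $[x',b]\in\K q\cdot L$ is computed from $[x',g^k]$. The key obstacle is to show that the $q$-coefficient of $[x',b]$ cannot cancel $\lambda'-\lambda$. My approach is to write $b=\sum_k(\beta_kg^k+\gamma_kxg^k)$ and note $[x',g^k]=k\,g^kq$ (mod terms with $q^2=0$). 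Then I would use the constraint coming from $y^2=\lambda xq$, together with the condition that $d^1_{1,g^2}(b)$ must vanish (the coproduct of $\Phi(y)$ must equal the exact form \eqref{eq:section4-delta-y}). This forces $b\in\Pp_{1,g^2}$, that is, $b\in\K q\oplus\K x$, and these commute with $x'$. Hence $\lambda=\lambda'$.

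Finally, I would check the remaining consistency of $[g,y]$ and $y^2$ under this map. This is not needed for the "only if" direction, and the "if" direction is trivial.
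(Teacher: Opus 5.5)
Most of your argument follows the paper's proof. Support separates the three lemmas. Commutativity of $L$ (equivalently, vanishing of $[g,x]$) separates $r=0$ from $r=1$, and commutativity of $H$ separates $C(0,0)$ from $C(0,1)$. For $r=r'=1$, the relation $[g,x]=gq$ forces $\Phi(x)=x'+cq'$, and one then compares $[\Phi(x),\Phi(y)]$ with $\lambda q'$.

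\textbf{The gap} is in the step you flag as the key obstacle. You claim that $d^1_{1,g^2}(b)$ must vanish, so that $b\in\Pp_{1,g^2}=\K q\oplus\K x$. This fails for two reasons.
\begin{itemize}
\item The support of $y$ is $(1,s^2)=(1,1)$, because $s^2=g^4=1$. The relevant differential is therefore $d^1_{1,1}$, not $d^1_{1,g^2}$.
\item $\Delta(\Phi(y))=(\Phi\otimes\Phi)\Delta(y)$ is not $\Delta(y')$. With $\Phi(x)=x'+cq'$ and $q'g'^2=q'$ one gets
\[
d^1_{1,1}(b)=c\,(x'g'^2\otimes q'+q'\otimes x')+c^2\,q'\otimes q'=d^1_{1,1}\bigl(c\,x'q'+c^2q'\bigr).
\]
This is exactly the coboundary you earlier said was ``absorbed in $b$'', so your two claims contradict each other.
\end{itemize}
Since $\Pp_{1,1}(H')=0$, this gives $b=c\,x'q'+c^2q'$ exactly; this is the paper's formula $\Phi(y)=y'+cx'q'+c^2q'$. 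It does not lie in $\K q'\oplus\K x'$ when $c\neq0$. So the decisive step, as written, rests on a false identification of $b$. The appeal to $y^2=\lambda xq$ plays no role.

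\textbf{The conclusion still holds}, and either of two short repairs gives it.
\begin{itemize}
\item Use the correct $b$. Since $q'$ is central in $L'$ and $x'^2=0$, we have $[x',x'q']=[x',q']=0$. Hence $[x',b]=0$, and $\lambda=\lambda'$ follows.
\item Avoid identifying $b$ at all, using your own formula $[x',g'^k]=k\,g'^kq'$. Together with $g'^3q'=g'q'$, it gives $[x',L']\subseteq\operatorname{span}\{g'q',\,x'g'q'\}$. This span meets $\K q'=\K(1+g'^2)$ trivially in the basis $\{g'^k,x'g'^k\}$. Comparing coefficients in $\lambda q'=\lambda' q'+[x',b]$ then forces $\lambda=\lambda'$.
\end{itemize}
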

	\begin{proof}
		The support, up to automorphisms of $G$, separates the three lemmas.
		The primitive-support case was proved above. The two algebras with
		support of order $4$ were separated in Lemma~\ref{lem:case-m=1}.
		For support $g^2$, the vanishing of $[g,x]$ separates $r=0$ and
		$r\ne0$, and commutativity separates $C(0,0)$ and $C(0,1)$.
		
		Let $\Phi:C(1,\lambda)\to C(1,\lambda')$ be an isomorphism.
		Then $\Phi(g)=(g')^e$, $e\in\{1,3\}$, and the skew-primitive
		space and $[g,x]=g(1+g^2)$ give
		\[
		\Phi(x)=x'+c q',\qquad q'=1+(g')^2.
		\]
		Since $\Pp(H')=0$, the coproduct uniquely determines
		\[
		\Phi(y)=y'+c x'q'+c^2q'.
		\]
		The element $q'$ commutes with $x',y'$ and has square zero.
		Consequently $[\Phi(x),\Phi(y)]=\lambda' q'$, which must equal
		$\lambda q'$. Hence $\lambda=\lambda'$. The identity map proves
		the converse.
	\end{proof}
	
	\subsection{The direct product group $\Z_2\times\Z_2$}
	
	\begin{lem}\label{lem:42n-Classification-2}
		Let $G=\langle g,h\mid g^2=h^2=1,\ gh=hg\rangle$.
		For support $1$ the representatives are
		\begin{enumerate}
			\item $P_G(0,0,0)$, $P_G(0,0,1)$, and $P_G(1,0,0)$;
			\item $P_G(1,\beta_0,0)$ with $\beta_0(g)=1$, $\beta_0(h)=0$;
			\item $\mathcal P(t,u)=P_G(0,\beta_t,u)$,
			where $\beta_t(g)=1$, $\beta_t(h)=t$ and $(t,u)\in\K^2$.
		\end{enumerate}
		For support $g$, all extensions have presentations $D(a,b,\lambda)$:
		\begin{gather*}
			[g,h]=0,\quad g^2=h^2=1,\quad q=1+g,\quad x^2=ax,\\
			[g,x]=a q,\quad [h,x]=b hq,\quad
			[g,y]=a xq+a^2q,\quad [h,y]=b xhq+b^2hq,\\
			[x,y]=\lambda q,\qquad y^2=ay+\lambda xq,
		\end{gather*}
		where $a\in\{0,1\}$, $b,\lambda\in\K$, and
		\begin{equation}\label{eq:D-condition}
			a(b^2+b)=0.
		\end{equation}
		The coproduct is \eqref{eq:section4-delta-y} with $s=g$.
		Representatives for this support are
		\[
		D(0,0,0),\qquad D(0,0,1),\qquad
		\mathcal M_1(\lambda):=D(0,1,\lambda),\qquad D(1,0,0).
		\]
		Every displayed presentation has dimension $16$.
	\end{lem}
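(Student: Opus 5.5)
The proof splits into support $1$ and support $g$.

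\emph{Support $1$.} Proposition~\ref{pro:primitive-support-data} shows that every extension is some $P_G(\epsilon,\beta,\mu)$. Here $\beta:G\to(\K,+)$ is a homomorphism, so it is a pair $(\beta(g),\beta(h))\in\K^2$, and $\Aut(G)\cong GL_2(\mathbb F_2)$ acts on these pairs. Isomorphism classes are then governed by Proposition~\ref{pro:primitive-support-isomorphisms}, and I would sort into cases.

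\begin{itemize}
\item If $\beta=0$, the case $\epsilon=0$ gives $\mu\in\{0,1\}$ after rescaling by a cube root. The case $\epsilon=1$ gives $\mu=0$, since $B\mapsto B^2+B$ is surjective.
\item If $\epsilon=1$ and $\beta\neq0$, condition \eqref{eq:primitive-beta-condition} forces $\beta$ to take values in $\mathbb F_2$. Then $\beta$ is a nonzero functional on $G$. Since $GL_2(\mathbb F_2)$ acts transitively on nonzero functionals, we may take $\beta=\beta_0$, and $\mu$ is removed as before.
\item If $\epsilon=0$ and $\beta\ne0$, we have $A=$ anything and $B$ irrelevant. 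After an automorphism of $G$ we may assume $\beta(g)\neq0$, and rescaling with weights $1,2$ gives $\beta(g)=1$. This is $\beta_t$ with parameters $(t,u)$. The residual identifications come from the $\theta\in\Aut(G)$ together with $A$ that preserve the normal form $\beta(g)=1$.
\end{itemize}

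The main computation is the third case. I would identify the residual identifications with the generators of $\sim$. The automorphism $g\mapsto g$, $h\mapsto gh$ sends $t\mapsto t+1$ with $A=1$, so $u$ is fixed. The automorphism swapping $g$ and $h$ needs $A=t^{-1}$ when $t\neq0$, sending $t\mapsto t^{-1}$. Then $A\mu=A^4\mu'$ gives $u'=uA^{-3}=u t^3$. I need to check the direction of this relation against the stated $u/t^3$; the equivalence relation is symmetric, so it only matters which generator is written. Since $GL_2(\mathbb F_2)\cong S_3$ is generated by these two automorphisms, this gives exactly $\sim$. Non-isomorphism between the remaining cases follows from Proposition~\ref{pro:primitive-support-isomorphisms} by comparing $\epsilon$ and whether $\beta=0$.

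\emph{Support $g$.} I would follow the proofs of Lemma~\ref{lem:case-m=1} and Lemma~\ref{lem:case-m=2theta}. The base is case (4) of Definition~\ref{def:base-4dim}, and $\Pp(H)=0$ by \eqref{eq:section4-primitive-spaces}.

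\begin{enumerate}
\item Comparing coproducts, $[t,y]$ minus the displayed lower terms is $(t,t)$-skew primitive. It therefore lies in $\K t(1+s^2)=0$, since $s^2=1$. This gives $[g,y]$ and $[h,y]$ as displayed.
\item The coproduct of $[x,y]$ gives $[x,y]=\rho x+\lambda q$.
\item The square coproduct gives $y^2=ay+\lambda xq+(\text{multiple of }x)$.
\item Applying $\delta=[y,-]$ to $x^2=ax$, and using the overlap $y^2x$ as in Lemma~\ref{lem:case-m=1}, forces $\rho=0$ and kills the extra $x$-term.
\end{enumerate}

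Next I would apply $\delta$ to the base relations. The relation $[h,x]=bhq$ is compatible with $[g,x]=aq$ only under \eqref{eq:D-condition}; this is the same consistency as in the proof of Lemma~\ref{lem:base-location}, now at the level of $\delta^2=[u,-]+v\delta$. One must also check $\delta(u)+\sigma(v)u=vu$. Here $q^2=0$ and $q$ is central, which makes $\delta(u)$ vanish, unlike the $\Z_4$ case. Proposition~\ref{pro:reconstruction} then gives existence and dimension $16$.

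For the normal forms:
\begin{itemize}
\item If $a=0$, rescaling $x\mapsto Ax$, $y\mapsto A^2y$ sends $(b,\lambda)\mapsto(Ab,A^3\lambda)$. So $b\neq0$ normalizes to $1$, giving $\mathcal M_1(\lambda)$, and $b=0$ gives $\lambda\in\{0,1\}$.
\item If $a=1$, then $b\in\{0,1\}$. The case $b=1$ is sent to $b=0$ by the automorphism $h\mapsto gh$. Replacing $y\mapsto y+cx+c'q$ then removes $\lambda$; I expect this to be the delicate computation.
\end{itemize}

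For $\mathcal M_1(\lambda)\cong\mathcal M_1(\lambda')$ I would argue as in Proposition~\ref{pro-iso-Zn}. The image of $x$ is $Ax'+cq'$. The automorphisms of $G$ fixing $g$ preserve $b=1$ only when $A=1$. Then $[\Phi(x),\Phi(y)]=\lambda' q'$ forces $\lambda=\lambda'$.
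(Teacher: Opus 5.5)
Your support-$1$ case analysis and your derivation of the relations $D(a,b,\lambda)$ follow the paper's proof. The genuine gap is the step the lemma needs in order to list $D(1,0,0)$ as the only representative with $a=1$: the reduction of $D(1,b,\lambda)$ to $D(1,0,0)$.

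\textbf{The substitution for $a=1$ fails.} Your substitution $y\mapsto y+cx+c'q$ with $x$ fixed cannot work, for two reasons.
\begin{enumerate}
\item It does not preserve the normalized coproduct, since $d^1_{1,1}(cx+c'q)=c\,q\otimes x+c'\,q\otimes q\neq0$.
\item Since $\Pp(H)=0$ for this support, once $x$ is fixed the coproduct $\Delta(y)=y\otimes1+1\otimes y+xg\otimes x$ determines $y$ uniquely. So no change of $y$ alone can move $\lambda$.
\end{enumerate}

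\textbf{The missing idea.} You must move $x$ inside $\Pp_{1,g}(H)=\K q\oplus\K x$. Take $X=x+cq$. This forces $Y=y+cxq+c^2q$, because
$d^1_{1,1}(cxq+c^2q)=c(q\otimes x+xg\otimes q)+c^2q\otimes q=Xg\otimes X-xg\otimes x$.
For $a=1$ one has $q^2=0$, $[x,q]=q$ and $[y,q]=(x+1)q$, hence
\[
X^2=X,\qquad [X,Y]=(\lambda+c^2+c)q,\qquad Y^2=Y+(\lambda+c^2+c)Xq,
\]
and the relations with $g,h$ keep their form. Choosing $c$ with $c^2+c=\lambda$ removes $\lambda$; this is the change used in the proof of Proposition~\ref{pro-iso-ZnZm}. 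Then $h\mapsto gh$ takes $b=1$ to $b=0$, as you say; that step is correct.

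This also shows that your remark that $q$ is central is false exactly where it matters. The Artin--Schreier term $c^2+c$ comes from $[x,q]=q$ and $[y,q]=(x+1)q$. For $a=0$, $q$ is central and the same substitution leaves $[X,Y]=\lambda q$, which is why $\lambda$ survives in $\mathcal M_1(\lambda)$. The identity $\delta(u)=0$ still holds for $a=1$, but for a different reason: $\delta(u)=\lambda(\lambda q^2+ax^2q+a^2xq)=0$, not centrality.

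\textbf{A second error, outside this lemma.} The identifications among the $\mathcal P(t,u)$ are the content of Proposition~\ref{pro-iso-ZnZm}, not of this lemma, but your computation there is wrong. For the swap $g\leftrightarrow h$, the criterion $\beta(t)=A\beta'(\theta(t))$ with $\beta=\beta_t$, $\beta'=\beta_{t'}$ gives $1=At'$ and $t=A$. So $A=t$, and $A\mu=A^4\mu'$ yields $u'=u/t^3$. Your value $A=t^{-1}$ produces $(t,u)\mapsto(t^{-1},ut^3)$ instead.

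The appeal to symmetry does not repair this. Both $(t,u)\mapsto(t^{-1},u/t^3)$ and $(t,u)\mapsto(t^{-1},ut^3)$ are involutions, so neither is the inverse of the other. For generic $t$, the swap is the only $\theta$ carrying $\beta_t$ to a multiple of $\beta_{t^{-1}}$, which forces $A=t$. Hence $\mathcal P(t,u)\not\cong\mathcal P(t^{-1},ut^3)$ whenever $u\neq0$ and $t^6\neq1$, and the relation you would obtain differs from the correct one.
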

	\begin{proof}
		For primitive support, Proposition~\ref{pro:primitive-support-data}
		applies. If $\epsilon=0$ and $\beta=0$, normalize $\mu$ to $0$ or $1$.
		If $\epsilon=0$ and $\beta\ne0$, choose a group basis with
		$\beta(g)\ne0$ and rescale to obtain $\beta(g)=1$.
		If $\epsilon=1$, the image of $\beta$ is contained in $\{0,1\}$;
		all its nonzero possibilities are equivalent under $\Aut(G)$, and
		the Artin--Schreier change in $y$ removes $\mu$.
		
		For support $g$, use the last base presentation of
		Definition~\ref{def:base-4dim}. The coproduct equations give
		\[
		[g,y]=a xq+a^2q,\qquad [h,y]=b xhq+b^2hq.
		\]
		The differences from these expressions are respectively in
		$\Pp_{g,g}(H)$ and $\Pp_{h,h}(H)$, both of which vanish.
		The remaining coproduct and square comparisons give
		$[x,y]=\rho x+\lambda q$ and $y^2=ay+\lambda xq$;
		the overlaps with $x^2$ and $y^2$ force $\rho=0$.
		Set $\delta=[y,-]$, $u=\lambda xq$, and $v=a$.
		Applying $\delta$ to the base relation $[h,x]=b hq$ leaves
		$a(b^2+b)hq$, so \eqref{eq:D-condition} is necessary.
		
		Conversely the displayed values of $\delta(g),\delta(h),\delta(x)$
		preserve the base relations under \eqref{eq:D-condition}.
		Using $q^2=0$, one obtains
		$\delta^2=[u,-]+a\delta$, $\delta(u)=0$, and $\delta(a)=0$.
		The coproduct identities hold on $g,h,x$ and on $y^2$.
		Proposition~\ref{pro:reconstruction} gives the PBW basis and dimension.
		When $a=0$, rescaling normalizes $b\ne0$ to $1$; if $b=0$, it
		normalizes $\lambda\ne0$ to $1$.
		When $a=1$, \eqref{eq:D-condition} gives $b\in\{0,1\}$.
		The generator changes in Proposition~\ref{pro-iso-ZnZm} below reduce
		both values of $b$ and every $\lambda$ to $D(1,0,0)$.
	\end{proof}
	
	\begin{pro}\label{pro-iso-ZnZm}
		For primitive support, two members $\Pp(t,u)$ and
		$\Pp(t',u')$ are isomorphic precisely when their parameters
		are related by the equivalence relation generated by
		\begin{equation}\label{eq:P-parameter-orbits}
			(t,u)\longmapsto(t+1,u),\qquad
			(t,u)\longmapsto(t^{-1},u/t^3)\quad(t\ne0).
		\end{equation}
		The other four primitive-support representatives in
		Lemma~\ref{lem:42n-Classification-2} are pairwise non-isomorphic and
		are not isomorphic to a member of $\Pp$.
		For nontrivial support,
		\[
		\mathcal M_1(\lambda)\cong\mathcal M_1(\lambda')
		\quad\Longleftrightarrow\quad\lambda=\lambda'.
		\]
		If $\mathcal M_2(\lambda,\gamma)$ denotes $D(1,\gamma,\lambda)$,
		then
		\[
		\mathcal M_2(\lambda,\gamma)\cong D(1,0,0)
		\qquad(\lambda\in\K,\ \gamma\in\{0,1\}).
		\]
		Apart from \eqref{eq:P-parameter-orbits}, the representatives in
		Lemma~\ref{lem:42n-Classification-2} are pairwise non-isomorphic.
	\end{pro}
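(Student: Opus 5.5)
The plan is to treat the primitive-support and support-$g$ cases separately. In each case we specialize Proposition~\ref{pro:primitive-support-isomorphisms}, or its analogue for support $g$, which we derive by the same method. Every isomorphism preserves $L=\langle H_1\rangle$ by Lemma~\ref{lem:section4-y}. Lemma~\ref{lem:generator-rigidity} and the description of skew-primitives in Lemma~\ref{lem:base-location} then force the shape of the isomorphism: $t\mapsto\theta(t)$ with $\theta\in\Aut(G)$, $x\mapsto Ax'+c(\text{group term})$, and $y\mapsto A^2y'+(\text{terms in }L^+)$.

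\emph{Primitive support.} Here $\epsilon$ is an invariant: $x^2=0$ versus $x^2=x$ is preserved, since $A=1$ is forced when $\epsilon=1$. Whether $\beta=0$ is also an invariant. This separates $P_G(0,0,\ast)$, $P_G(1,0,0)$, $P_G(1,\beta_0,0)$ and the family $\mathcal P$. The algebras $P_G(0,0,0)$ and $P_G(0,0,1)$ are separated by the relation $A\mu=A^4\mu'$, so $\mu=0$ is invariant.

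For $\mathcal P(t,u)$ we have $\epsilon=0$. The isomorphism condition becomes $\beta_t=A\beta_{t'}\circ\theta$ and $u=A^3u'$. Now $\Aut(\Z_2\times\Z_2)\cong S_3$ permutes the three nonidentity elements. The normalization $\beta(g)=1$ must be restored by choosing $A=\beta_{t'}(\theta(g))^{-1}$.

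\begin{itemize}
\item The transposition $g\mapsto g$, $h\mapsto gh$ gives $t\mapsto t+1$ with $A=1$.
\item The transposition $g\leftrightarrow h$ gives $A=t^{-1}$. This needs $t\ne0$; if $t=0$, then $\beta_t(h)=0$ and no rescaling restores $\beta(g)=1$. The resulting map is $(t,u)\mapsto(t^{-1},u/t^{3})$, up to computing the exact power of $A$ carefully.
\end{itemize}

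These two transpositions generate $S_3$. So the orbits are exactly the equivalence classes generated by \eqref{eq:P-parameter-orbits}, which proves both directions. The main bookkeeping risk is getting the exponent of $t$ right, via $A\mu=A^4\mu'$. I would check this explicitly on the generators.

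\emph{Support $g$.} We track the isomorphism data $x\mapsto Ax'+cq'$ and $y\mapsto A^2y'+c'x'q'+c''q'$. Since $\Pp(H')=0$, the element $\Phi(y)$ is uniquely determined by the coproduct. We also allow $\theta$ fixing $g$, so $h\mapsto h$ or $h\mapsto gh$.

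For $a=0$: the value $[h,x]$ shows that $b=0$ versus $b\ne0$ is invariant. Commutativity separates $D(0,0,0)$ from $D(0,0,1)$, and $\lambda$ is rescaled by $A^3$. For $\mathcal M_1(\lambda)$, normalizing $b=1$ forces $A=1$, or the analogous constraint coming from $h\mapsto gh$ (which sends $b\mapsto b+a=b$). Then $q'^2=0$ and centrality give $[\Phi(x),\Phi(y)]=\lambda q'$, so $\lambda=\lambda'$, exactly as in Proposition~\ref{pro-iso-Zn}.

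For $a=1$, we first use $h\mapsto gh$ to send $b=1$ to $b=0$. We then remove $\lambda$ by a shift $y\mapsto y+cx q+\dots$ chosen so that the correction to $[x,y]$ is $(c^2+c)q$, or similar. Since $\K$ is algebraically closed, the Artin--Schreier equation can be solved, giving $\mathcal M_2(\lambda,\gamma)\cong D(1,0,0)$. I expect this explicit generator change to be the main computational obstacle: one must verify that it preserves every relation, or that it realizes the required $\Gamma$-action.

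\emph{Cross-separation.} The support $s$ of $x$, up to $\Aut(G)$, is an invariant. Within each support, the invariants above separate all listed representatives.
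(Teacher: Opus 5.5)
Your proposal is correct and follows essentially the same route as the paper. For primitive support you specialize Proposition~\ref{pro:primitive-support-isomorphisms} through the two transpositions $h\mapsto gh$ and $g\leftrightarrow h$ of $\Aut(\Z_2\times\Z_2)$. For $\mathcal M_1$ you use the forced shape $x\mapsto x'+cq'$, $y\mapsto y'+cx'q'+c^2q'$ together with $[q',x']=[q',y']=0$ and $(q')^2=0$. For $a=1$ you combine an Artin--Schreier shift with $h\mapsto gh$; you apply these two changes in the opposite order to the paper, which is harmless since each preserves the other's parameter.

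When you write out the $a=1$ step, make the shift the simultaneous one $X=x+cq$, $Y=y+cxq+c^2q$; changing $y$ alone is impossible because $\Pp(H)=0$. This shift gives $[X,Y]=(\lambda+c^2+c)q$. Also state explicitly that $x^2=ax$ separates $a=0$ from $a=1$, since your list of invariants for support $g$ does not yet include this.
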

	\begin{proof}
		For $\Pp$, changing $h$ to $gh$ gives the first transformation
		in \eqref{eq:P-parameter-orbits}. Interchanging $g,h$ and rescaling
		$x,y$ by $t^{-1},t^{-2}$ gives the second. These changes generate the
		action of $\Aut(G)=\operatorname{GL}_2(\mathbb F_2)$ followed by the
		normalization $\beta(g)=1$. Proposition~\ref{pro:primitive-support-isomorphisms}
		therefore gives exactly the stated equivalence relation. The same
		proposition separates $\epsilon=0$ from $\epsilon=1$, $\beta=0$ from
		$\beta\ne0$, and the two possibilities for $\mu$ when both vanish.
		
		For an isomorphism between two $\mathcal M_1$ algebras, the support
		forces $g\mapsto g'$, and the group map has $h\mapsto(g')^e h'$,
		$e\in\{0,1\}$. The skew-primitive space and the relation
		$[h,x]=h(1+g)$ force
		\[
		x\longmapsto x'+c q',\qquad
		y\longmapsto y'+c x'q'+c^2q',\qquad q'=1+g'.
		\]
		Here the image of $y$ is fixed by its coproduct, since $\Pp(H')=0$.
		The elements $q',x',y'$ satisfy $[q',x']=[q',y']=0$ and $(q')^2=0$;
		comparison of $[x,y]$ gives $\lambda=\lambda'$.
		
		In $D(1,\gamma,\lambda)$, put
		\[
		X=x+cq,\qquad Y=y+c xq+c^2q.
		\]
		Their coproducts are those of $x,y$, with $x$ replaced by $X$.
		Using $q^2=0$, $[x,q]=q$, and $[y,q]=(x+1)q$, multiplication gives
		\[
		X^2=X,\quad [X,Y]=(\lambda+c^2+c)q,\quad
		Y^2=Y+(\lambda+c^2+c)Xq.
		\]
		The commutators with $g,h$ retain their prescribed form and the
		coefficient $\gamma$. Choose $c$ with $c^2+c=\lambda$.
		Replacing $h$ by $gh$ then changes $\gamma$ to $\gamma+1$, leaving
		the other normalized relations unchanged. These invertible generator
		changes prove the assertion for $\mathcal M_2$.
		
		For the remaining nontrivial-support cases, the vanishing of the
		square on the skew-primitive direction separates $a=0$ and $a=1$.
		Within $a=0$, the vanishing of the conjugation action on that direction
		separates $b=0$ and $b\ne0$; commutativity separates $D(0,0,0)$ and
		$D(0,0,1)$. Primitive and nontrivial support are separated by
		$\dim\Pp(H)$.
	\end{proof}
	
	\begin{thm}\label{thm:dim4-extension-classification}
		Let $H$ be a pointed Hopf algebra of dimension $16$ with one-dimensional
		infinitesimal braiding and four-dimensional non-Nichols diagram.
		If $\G(H)\cong\Z_4$, its isomorphism class is represented in
		Lemmas~\ref{lem:case-m=0}, \ref{lem:case-m=1}, and
		\ref{lem:case-m=2theta}. If $\G(H)\cong\Z_2\times\Z_2$, it is
		represented in Lemma~\ref{lem:42n-Classification-2}.
		The complete parameter identifications are those of
		Propositions~\ref{pro-iso-Zn} and \ref{pro-iso-ZnZm}.
	\end{thm}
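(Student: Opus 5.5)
The argument assembles the results of this section. First, $\dim H=|\G(H)|\dim R$ gives $|\G(H)|=4$ (Lemma~\ref{lem:base-location}), so $\G(H)\cong\Z_4$ or $\Z_2\times\Z_2$. Lemma~\ref{lem:base-location} then identifies $L=\langle H_1\rangle=\langle \G(H),x\rangle$ as an eight-dimensional Hopf subalgebra with one of the normalized presentations of Definition~\ref{def:base-4dim}, indexed by the support $s$ of $x$. Since $L_0=H_0$ and $\dim H=2\dim L$, Theorem~\ref{thm:classification-index-two} makes $H$ a normalized quadratic extension of $L$. Lemma~\ref{lem:section4-y}, using Proposition~\ref{pro:H2-base} and Lemma~\ref{lem:prelim-extension-injection}, normalizes the new generator $y$ so that its coproduct is \eqref{eq:section4-delta-y}, and shows that $\sigma=\id_L$. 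Hence $H$ is determined by a derivation $\delta=[y,-]$ of $L$ together with $u,v\in L$ satisfying the identities of Proposition~\ref{pro:associative-data} and the compatibility conditions \eqref{eq:comp1}--\eqref{eq:comp4}.

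Next, split by the pair $(G,s)$ up to $\Aut(G)$. For $\Z_4$ the supports are $1$, $g$ (equivalently $g^3$) and $g^2$; for $\Z_2\times\Z_2$ they are $1$ and a nontrivial element, which we may take to be $g$. The primitive-support cases are exhausted by Proposition~\ref{pro:primitive-support-data}. For $\Z_4$ with $s=g$ use Lemma~\ref{lem:case-m=1}, with $s=g^2$ use Lemma~\ref{lem:case-m=2theta}, and for $\Z_2\times\Z_2$ with $s=g$ use Lemma~\ref{lem:42n-Classification-2}. Each lemma lists normalized presentations that realize all data in $\mathcal D(L;1,s^2)$ modulo the gauge group, so every such $H$ is isomorphic to one of the listed representatives. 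Each lemma also verifies the PBW basis, so every representative has dimension $16$.

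For the identifications, the key point is that $L=\langle H_1\rangle$ is intrinsic. Any Hopf isomorphism $H\to H'$ is therefore base-preserving, and by Lemma~\ref{lem:generator-rigidity} and Theorem~\ref{thm:orbit} (extended by Remark~\ref{rmk:unmarked-supports}) it corresponds to a change $x\mapsto Ax+\dots$, $y\mapsto \alpha y+b$ combined with a Hopf automorphism of $L$. The group $G$ and the support $s$ up to $\Aut(G)$ are isomorphism invariants, so different cases cannot be identified. Within each case, Propositions~\ref{pro:primitive-support-isomorphisms}, \ref{pro-iso-Zn} and \ref{pro-iso-ZnZm} describe exactly when two parameter values give isomorphic algebras.

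The main obstacle is completeness: one must confirm that these lemmas exhaust every possible $\delta,u,v$ rather than only the normal forms they reach. The plan is to check, case by case, that each commutator $[t,y]$ and $[x,y]$ and the square $y^2$ is pinned down modulo the skew-primitive spaces \eqref{eq:section4-primitive-spaces}, and that every residual parameter is either killed by an associativity overlap or retained in the listed family. This check is what justifies combining the individual lemmas into the statement.
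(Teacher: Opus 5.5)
Your proposal is correct and follows the paper's own proof. Both arguments use the intrinsic base $L=\langle H_1\rangle$, normalize $y$ through $\HH^2_{1,s^2}(L)=\K[\omega_s]$ and Lemma~\ref{lem:section4-y}, split by the support up to $\Aut(G)$, and then cite the case lemmas together with Propositions~\ref{pro-iso-Zn} and~\ref{pro-iso-ZnZm}.

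Two small points of phrasing:
\begin{itemize}
\item The reductions in those lemmas use more than the gauge group. They also use transports and isomorphisms between different base presentations, e.g.\ $x\mapsto Ax$, or $h\mapsto gh$, which switches $b=1$ to $b=0$ in $D(1,b,\lambda)$.
\item The completeness check you single out as the main obstacle is already carried out inside the proofs of those lemmas. The theorem itself therefore only needs to cite them.
\end{itemize}
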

	\begin{proof}
		The base $L$ is intrinsic and $\dim L=8$ by
		Lemma~\ref{lem:base-location}, hence preserved by every isomorphism.
		With $L$ fixed, Proposition~\ref{pro:H2-base} computes
		$\HH^2_{1,s^2}(L)=\K[\omega_s]$ and
		$\HH^2_{1,t}(L)=0$ for $t\ne s^2$, so Lemma~\ref{lem:section4-y}
		fixes the coproduct of $y$ and the PBW basis of $H$ over $L$. The
		support cases exhaust the group-automorphism orbits. For
		$\G(H)\cong\Z_4$, Lemmas~\ref{lem:case-m=0}, \ref{lem:case-m=1},
		and~\ref{lem:case-m=2theta} give the isomorphism classes; for
		$\G(H)\cong\Z_2\times\Z_2$, Lemma~\ref{lem:42n-Classification-2}
		does. The criteria of Propositions~\ref{pro-iso-Zn}
		and~\ref{pro-iso-ZnZm} account for all group automorphisms and all
		possible images of $x,y$. Hence the list is exhaustive, with the
		stated identifications.
	\end{proof}

	\begin{cor}\label{cor:order-two-base}
		Let $G=\Z_2=\langle g\rangle$. The eight-dimensional pointed Hopf
		algebras with one-dimensional infinitesimal braiding and a
		four-dimensional non-Nichols diagram are obtained as follows:
		\begin{enumerate}
			\item for primitive support, use the presentation
			$P_G(\epsilon,\beta,\mu)$ and condition
			\eqref{eq:primitive-beta-condition}; its isomorphism criterion is
			Proposition~\ref{pro:primitive-support-isomorphisms};
			\item for support $g$, omit $h$ from $D(a,0,\lambda)$.
			Representatives have $(a,\lambda)=(0,0),(0,1),(1,0)$.
		\end{enumerate}
		Their coproducts are \eqref{eq:section4-delta-y}, and their PBW basis
		is $g^ix^jy^k$, $i,j,k\in\{0,1\}$.
	\end{cor}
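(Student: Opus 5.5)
The plan is to rerun the argument of Section~\ref{sec:case-dimR4} with $G=\Z_2=\langle g\rangle$ in place of a group of order $4$, keeping track of what changes when there is only one nontrivial group element.

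First I locate the base. Since $\dim H=8=|G|\dim R$ with $\dim R=4$, we get $|\G(H)|=2$. The degree-one part of the diagram gives a skew-primitive $x$ with support $s\in\{1,g\}$, and the Taft--Wilson argument of Lemma~\ref{lem:base-location} shows that $L=\langle \G(H),x\rangle$ is four-dimensional. Its possible presentations are exactly the non-group-algebra cases of Lemma~\ref{lem:dim4-pointed}:
\begin{itemize}
\item for $s=1$: $[g,x]=0$ and $x^2=\epsilon x$;
\item for $s=g$: $[g,x]=a(1+g)$ and $x^2=ax$.
\end{itemize}
This is the specialization of Definition~\ref{def:base-4dim}(1) and (4) with $h$ omitted. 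Since $H_0=L_0$ and $\dim H=2\dim L$, Theorem~\ref{thm:classification-index-two} makes $H$ a normalized quadratic extension of $L$.

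Next I compute $\HH^2_{1,t}(L)$ for $t\in\{1,g\}$. I repeat the quiver argument of Proposition~\ref{pro:H2-base}. The dual $L^*$ is the radical-square-zero algebra of the quiver with vertices $G$ and arrows $t\to ts$. Its degree-two Ext is one-dimensional precisely at $t=s^2$, which equals $1$ in both cases, and the class is represented by $xs\otimes x$.

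Lemma~\ref{lem:section4-y} then goes through verbatim with $h$ absent. We may normalize the new generator to $y$ with coproduct \eqref{eq:section4-delta-y}. The argument using \eqref{eq:comp1}--\eqref{eq:comp3} gives $\sigma=\id$, and the PBW basis is $g^ix^jy^k$. The one point to check is the case $s=g$: there \eqref{eq:comp2} now yields $c(1+g)=0$, with $g^3=g$, and this still forces $c=0$.

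I then split by support.
\begin{enumerate}
\item For $s=1$, the proof of Proposition~\ref{pro:primitive-support-data} never used $|G|=4$. It gives $P_G(\epsilon,\beta,\mu)$ with \eqref{eq:primitive-beta-condition}, where $\beta$ is now determined by $\beta(g)$. Likewise the proof of Proposition~\ref{pro:primitive-support-isomorphisms} gives the isomorphism criterion, with $\Aut(G)$ trivial.
\item For $s=g$, I follow the $D(a,b,\lambda)$ computation of Lemma~\ref{lem:42n-Classification-2}, deleting every relation involving $h$, which amounts to setting $b=0$. The coproduct comparisons force
\[
[g,y]=axq+a^2q,\qquad [x,y]=\lambda q,\qquad y^2=ay+\lambda xq,\qquad q=1+g,
\]
and the overlaps force $\rho=0$. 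Condition \eqref{eq:D-condition} then holds automatically.
\end{enumerate}

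For the normalization in the case $s=g$:
\begin{itemize}
\item When $a=0$, the rescaling $x\mapsto Ax$, $y\mapsto A^2y$ multiplies $\lambda$ by $A^3$, so $\lambda\in\{0,1\}$.
\item When $a=1$, the substitution $X=x+cq$, $Y=y+cxq+c^2q$ from Proposition~\ref{pro-iso-ZnZm}, with $c^2+c=\lambda$, removes $\lambda$.
\end{itemize}
The three resulting representatives are pairwise distinct: commutativity separates $(0,0)$ from $(0,1)$, and whether the square on the skew-primitive direction vanishes separates $a=0$ from $a=1$.

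Finally, existence and the PBW basis for every listed presentation follow from Proposition~\ref{pro:reconstruction} together with the Diamond Lemma checks already carried out in dimension $16$. The needed identities are $\delta^2=[u,-]+v\delta$ and $\delta(u)=0$, and these use only $q^2=0$ and $[x,q]=aq$, which hold with $G=\Z_2$. The step I expect to need the most care is the re-verification that $\sigma=\id$ and that the overlap computations survive the collapse $g^2=1$. In particular, terms such as $1+g^3$ that appeared in Lemma~\ref{lem:case-m=1} now coincide with $q$. For this reason I would follow Lemma~\ref{lem:42n-Classification-2}, where the group element already has order $2$, rather than the $\Z_4$ computations.
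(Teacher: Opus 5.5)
Your proposal is correct and follows essentially the same route as the paper's proof. You take the four-dimensional base from Lemma~\ref{lem:dim4-pointed}, reuse the quiver computation of Proposition~\ref{pro:H2-base} and the primitive-support derivation argument, run the $D(a,b,\lambda)$ computation with $h$ omitted, and normalize $\lambda$ by rescaling when $a=0$ and by the shift $X=x+cq$, $Y=y+cxq+c^2q$ when $a=1$. Your extra checks, namely $\sigma=\id$ via $c(1+g)=0$ and the separation of the three representatives by commutativity and by the square on $\Pp_{1,g}$, fill in details the paper leaves implicit.
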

	\begin{proof}
		The four-dimensional non-trivial bases are those of
		Lemma~\ref{lem:dim4-pointed}. The coalgebra computation in
		Proposition~\ref{pro:H2-base} and the primitive-support derivation
		calculation apply unchanged. For nontrivial support, the proof for
		$D$ applies after omitting $h$. Rescaling normalizes $\lambda\ne0$
		when $a=0$, and the change $X=x+c(1+g)$,
		$Y=y+cx(1+g)+c^2(1+g)$ removes $\lambda$ when $a=1$.
		The PBW basis has $2\cdot2\cdot2=8$ elements.
	\end{proof}

	\section{The case $\dim R=8$: the second cohomological extension}
	\label{sec:case-dimR8}
	
	Throughout this section, $\K$ is algebraically closed of characteristic
	$2$. The algebra $H$ is a pointed Hopf algebra of dimension $16$ whose
	diagram has dimension $8$ and whose infinitesimal braiding is
	one-dimensional. Thus $\G(H)=\langle g\rangle\cong\Z_2$.
	Write $q=1+g$ and let $s\in\{1,g\}$ be the support of the
	infinitesimal braiding. All generators $x,y,z$ have counit zero. The classification of extensions obtained by adjoining $z$ splits
	according to the support $s$ of $x$.
	
	\subsection{The intrinsic base and its coalgebra}
	
	\begin{lem}\label{lem:section5-base}
		The subalgebra $L=\langle H_2\rangle$ is an eight-dimensional Hopf
		subalgebra, and $L=H_3$ as vector spaces. It has generators $g,x,y$ with
		\begin{equation}\label{eq:section5-base-coproduct}
			\Delta(x)=x\otimes1+s\otimes x,\qquad
			\Delta(y)=y\otimes1+1\otimes y+xs\otimes x.
		\end{equation}
		Its possible presentations are those of
		Corollary~\ref{cor:order-two-base}. Every Hopf isomorphism between
		two algebras $H$ as above preserves $L$.
	\end{lem}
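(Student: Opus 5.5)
We work in $\gr H\cong R\#\K[\Z_2]$, using $\dim H=16$, $|\G(H)|=2$ and $\dim R=8$. The infinitesimal braiding is $\K x$ with trivial character, so $\mathcal B(R(1))=\K[x]/(x^2)$ has dimension $2$. Since $R$ is not a Nichols algebra, $R\ne\mathcal B(R(1))$, and we aim to show that the Hilbert series of $R$ is $1+t+t^2+t^3+t^4+t^5+t^6+t^7$ truncated appropriately; precisely, that $\dim R(n)=1$ for $n\le 3$ and that the piece of $R$ in degrees $\le 3$ is spanned by $1,x,\bar y,x\bar y$. The tool is the injection $R(m)/S(m)\hookrightarrow \HH^{2,m}_{1,1}(S)$ from Section~\ref{sec:prelim}.

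First, $R(2)$ is handled by taking $S=\mathcal B(\K x)$. The cohomology $\HH^2(\K[x]/(x^2))$ with trivial coefficients, in the sense of coalgebra cohomology of the divided power coalgebra, has degree-two part spanned by $[x\otimes x]$. Hence $\dim R(2)\le 1$, and $R(2)\ne0$: otherwise $R$ would be generated in degree one, and then $R(n)=0$ for $n\ge2$ by the same injection, giving $\dim R=2$. This yields $\bar y$ with $\bar\Delta(\bar y)=x\otimes x$. Lifting to $H$ and adjusting by Lemma~\ref{lem:prelim-extension-injection} gives $y\in H_2$ with the coproduct \eqref{eq:section5-base-coproduct}, exactly as in Lemma~\ref{lem:section4-y}: apply the Hochschild class to $H_1=\K[\Z_2]\oplus\K[\Z_2]x$, rescale, and remove coboundaries.

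Next, $H_2=H_1+\K[\Z_2]y$ has dimension $6$. The subalgebra $L=\langle H_2\rangle$ is a Hopf subalgebra, since $H_2$ is a subcoalgebra stable under the antipode. Its associated graded is a Hopf subalgebra of $\gr H$ whose diagram $R_L\subseteq R$ contains $x,\bar y$. By Nichols--Zoeller $\dim L$ divides $16$, and $\dim L>6$ forces $\dim L\in\{8,16\}$. To rule out $16$, i.e.\ $L=H$, we show that $L$ is spanned by $g^ix^jy^k$ with $j,k\in\{0,1\}$. The commutators $[g,y]$, $[x,y]$ and the squares $x^2,y^2$ have coproducts forcing them into lower filtration pieces. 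For example, $y^2$ satisfies $\Delta(y^2)=y^2\otimes1+1\otimes y^2+(xs)^2\otimes x^2$, and modulo $x^2\in H_1$ it lands in $H_2$ (here characteristic $2$ is used). Thus the eight monomials span $L$, and these relations also give $L=H_3$. Indeed, $x\bar y$ lies in degree $3$ and $\dim(L\cap H_3)=8$, while $\dim H_3\le 8$ follows from $R(3)$ being at most one-dimensional, by the injection applied to $S=R_{\le2}$. We then conclude $L=H_3$ as vector spaces. In particular $L_0=H_0$, $\dim L=8$, and $L$ falls under Corollary~\ref{cor:order-two-base}, which gives the possible presentations.

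The main obstacle is the bound $\dim R(3)=1$ together with the fact that $R(3)$ is spanned by $x\bar y$ rather than by a new generator. To get this, we compute $\HH^{2,3}_{1,1}$ of the four-dimensional braided coalgebra $\langle 1,x,\bar y,x\bar y\rangle$ truncated below degree $3$, following the quiver argument of Proposition~\ref{pro:H2-base}: degree-$3$ cocycles are spanned by $x\otimes\bar y+\bar y\otimes x$ modulo coboundaries, and this class is already hit by $x\bar y$. Intrinsicness of $L$ is then immediate, because Hopf isomorphisms preserve the coradical filtration and hence $H_2$ and the subalgebra it generates.
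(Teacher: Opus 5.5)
The route you outline can be completed, but one step is wrong as written: the computation of $\Delta(y^2)$. You may not assume that $y$ commutes with $x$ or $xs$. In characteristic $2$ the correct formula is
\[
\Delta(y^2)=y^2\otimes1+1\otimes y^2+(xs)^2\otimes x^2+[y,xs]\otimes x+xs\otimes[y,x].
\]
The last two terms really occur. For example, in the support-$g$ algebra of Corollary~\ref{cor:order-two-base} with $(a,\lambda)=(0,1)$ one has $[x,y]=1+g$. So $y^2\in H_2$ does not follow. The coproduct comparisons, done in order, give
\[
[g,x]\in H_0,\quad x^2\in H_1,\quad [g,y]\in H_1,\quad [x,y]\in H_2,
\]
and then only $y^2\in H_3$.

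This affects the structure of your argument. You used $y^2\in H_2\subseteq\operatorname{span}\{g^ix^jy^k\}$ to close that span under multiplication before knowing anything about $H_3$, and let Nichols--Zoeller supply $\dim L=8$. With only $y^2\in H_3$, you must first know that $H_3$ is spanned by the eight monomials, that is, that $x\bar y\neq0$ spans $R(3)$. Nichols--Zoeller cannot replace this step: it says nothing about the span until the span is known to be a subalgebra.

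The repair is already in your last paragraph, but it must come first. The relevant coalgebra is the three-dimensional $R_{\le2}$, with $\HH^{2,3}_{1,1}(R_{\le2})=\K[x\otimes\bar y+\bar y\otimes x]$. Using $x^2=0$ in $R$ and the trivial braiding, $\bar\Delta(x\bar y)=x\otimes\bar y+\bar y\otimes x$. So $x\bar y\neq0$ realizes this class, and injectivity gives $R(3)=\K x\bar y$. Then $H_3=\operatorname{span}\{g^ix^jy^k\}$ has dimension $8$. The relations listed above make this span closed under left multiplication by $g,x,y$, so $L=H_3$ directly, and the Nichols--Zoeller step becomes unnecessary.

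For comparison, the paper stays inside $R$. There $[x,\bar y]$ and $\bar y^2$ are primitive of degrees $3$ and $4$, hence zero. So $B=\operatorname{span}\{1,x,\bar y,x\bar y\}$ is a subalgebra, $\HH^{2,3}_{1,1}(B)=0$ gives $R(3)=B(3)$, and the same lifting relations are read off from $B$. This avoids the $H$-level coproduct bookkeeping that tripped you up.

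Your normalization of $\Delta(y)$ through $\HH^2_{1,1}(H_1)$ and Lemma~\ref{lem:prelim-extension-injection} is a valid alternative. The paper instead takes normalized generators from Corollary~\ref{cor:order-two-base} once $L$ is known to be eight-dimensional.
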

	\begin{proof}
		Let $R$ be the diagram of $H$. Then $R(1)=\K x$ with $x^2=0$ in $R$. Set
		$S=\K 1\oplus\K x=\mathcal B(\K x)$, and let $m\ge2$ be minimal
		with $R(m)\neq S(m)$. By Lemma~\ref{lem:prelim-extension-injection},
		\[
		R(m)/S(m)\hookrightarrow \HH^{2,m}_{1,1}(S).
		\]
		The coalgebra $S$ has terms only in degrees $0$ and $1$, so
		\[
		\HH^{2,m}_{1,1}(S)=0\quad(m\ne2),\qquad
		\HH^{2,2}_{1,1}(S)=\K[x\otimes x].
		\]
		Hence $m=2$, and $R(2)=\K y$ for some $y\in R(2)$ with
		$\bar\Delta_R(y)=x\otimes x$.
		
		Since $\Pp(R)=R(1)$ and  $[x,y]\in R(3)$ is primitive,   $[x,y]=0$. Similarly
		$y^2\in R(4)$ is primitive, so $y^2=0$. Together with $x^2=0$, the
		subalgebra of $R$ generated by $x$ and $y$ is
		\[
		B=\operatorname{span}\{1,x,y,xy\},
		\]
		concentrated in degrees $0,1,2,3$.
		
		Consider the degree-$3$ part of the normalized $2$-cochains of $B$.
		The space $B(1)\otimes B(2)\oplus B(2)\otimes B(1)$ is spanned by
		$x\otimes y$ and $y\otimes x$, and
		\[
		d^2_{1,1}(x\otimes y)=-x\otimes x\otimes x,\qquad
		d^2_{1,1}(y\otimes x)=x\otimes x\otimes x.
		\]
		Thus
		\[
		Z^{2,3}_{1,1}(B)=\K(x\otimes y+y\otimes x)
		=\K\,\bar\Delta(xy)
		=d^1_{1,1}(\K xy),
		\]
		so $\HH^{2,3}_{1,1}(B)=0$. Applying
		Lemma~\ref{lem:prelim-extension-injection} to the pair $B\subseteq R$
		gives $R(3)=B(3)=\K xy$.
		
		Therefore $R$ has one-dimensional homogeneous components in degrees
		$0,1,2,3$, with basis $1,x,y,xy$. Since $\dim H=16$ and
		$|\G(H)|=2$, the coradical filtration satisfies
		$H_n/H_{n-1}\cong R(n)\otimes\K[\G(H)]$, hence
		\[
		\dim H_3=|\G(H)|\sum_{i=0}^{3}\dim R(i)=2\cdot4=8.
		\]
		
		We write $G=\G(H)$ below. Choose lifts $x\in H_1$ and $y\in H_2$ whose leading terms are the
		displayed generators of $R(1)$ and $R(2)$. The eight elements
		\[
		\{t,\ tx,\ ty,\ txy\ :\ t\in G\}
		\]
		have distinct leading terms in $\gr H$, hence are linearly
		independent. Since $\dim H_3=8$, they form a basis of $H_3$. In
		particular,
		\[
		H_1=\operatorname{span}\{t,tx:t\in G\},\qquad
		H_2=\operatorname{span}\{t,tx,ty:t\in G\}.
		\]
		
		We claim that $\langle H_2\rangle\subseteq H_3$. Since
		$H_2=\operatorname{span}\{t,tx,ty:t\in G\}$, we have
		$\langle H_2\rangle=\langle G,x,y\rangle$. The lifting relations
		are
		\[
		x^2\in H_1,\qquad [x,y]\in H_2,\qquad y^2\in H_3,
		\]
		and, because $G=\Z_2$ acts trivially on $R(1)$ and $R(2)$ in
		characteristic $2$,
		\[
		[t,x]\in H_0,\qquad [t,y]\in H_1\qquad(t\in G).
		\]
		Using these relations, every word in $G$, $x$, and $y$ can be
		rewritten as a linear combination of the eight basis elements
		$\{tx^iy^j:t\in G,\ i,j\in\{0,1\}\}$ of $H_3$. Hence
		$\langle H_2\rangle\subseteq H_3$. Conversely all eight basis
		elements lie in $\langle H_2\rangle$. Therefore
		\[
		L:=\langle H_2\rangle=H_3
		\]
		as vector spaces, and $\dim L=8$.
		
		Since
		\[
		\Delta(H_2)\subseteq
		H_2\otimes H_0+H_1\otimes H_1+H_0\otimes H_2\subseteq L\otimes L
		\]
		and the antipode preserves the coradical filtration, 
		$S(H_2)\subseteq H_2$ and hence $L$ is a Hopf subalgebra:.
		
		Applying Corollary~\ref{cor:order-two-base} to $L$, we may choose
		normalized generators $g,x,y$ in $L$ so that
		\[
		\Delta(x)=x\otimes1+s\otimes x,\qquad
		\Delta(y)=y\otimes1+1\otimes y+xs\otimes x,
		\]
		and the presentations of $L$ are those listed there. This proves
		\eqref{eq:section5-base-coproduct}.
		
		Finally, every Hopf isomorphism preserves the coradical filtration,
		hence preserves $H_3=L$.
	\end{proof}
	\begin{lem}\label{lem:section5-cohomology}
		Let $L$ be any eight-dimensional algebra in
		Corollary~\ref{cor:order-two-base}, with coproduct
		\eqref{eq:section5-base-coproduct}. Then
		\[
		\HH^2_{1,1}(L)=\K[\omega_s],\qquad
		\HH^2_{1,g}(L)=0,
		\qquad
		\omega_s=xys\otimes x+xs\otimes xy+y\otimes y.
		\]
		The nonzero class has internal degree $4$ for the coalgebra grading
		described below.
	\end{lem}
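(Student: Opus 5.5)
The computation depends only on the coalgebra structure of $L$, and every algebra in Corollary~\ref{cor:order-two-base} with coproduct \eqref{eq:section5-base-coproduct} has the same coalgebra. I would therefore work with the graded coalgebra $C=\bigoplus_j C(j)$ with basis $t,tx,ty,txy$ ($t\in G$) in degrees $0,1,2,3$. I would check directly that the coproduct of each basis element is homogeneous. For instance
\[
\Delta(xy)=xy\otimes1+s\otimes xy+xs\otimes y+ys\otimes x+xys\otimes x\cdot(\text{lower})
\]
up to reordering, computed from the coproducts of $x$ and $y$ using only that $\Delta$ is multiplicative on the PBW basis. If the relations of $L$ introduce lower-order terms, I would instead pass to $\gr L$ and use the bound $\dim\HH^2_{g,h}(L)\le\dim\HH^2_{g,h}(\gr L)$ from \cite[Lemma 8.4]{WZZ}. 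Together with an explicit nonzero cocycle in $L$, this bound suffices for the claim.

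For $\gr L$ I would use \eqref{eq:coalgebra-ext} as in Proposition~\ref{pro:H2-base}. As a coalgebra, $\gr L$ is a direct sum of uniserial pieces: for each $t\in G$, the subcoalgebra spanned by $t,tx,ty,txy$ is the path-like coalgebra of the chain $t\to ts\to ts^2=t\to ts^3$ through the vertices, coloured by group-likes. Its dual is a Nakayama algebra, namely the quiver on $G$ with arrows $t\to ts$ modulo paths of length $4$. In such a truncated cyclic/linear quiver algebra, $\operatorname{Ext}^2(\K_1,\K_h)$ is computed from a minimal projective resolution. The projective cover of the simple at $1$ has Loewy length $4$, so its first syzygy is uniserial of length $3$ starting at $s$. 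The second syzygy is then the simple at $s^4=1$ shifted: one generator, in degree $4$. Hence $\operatorname{Ext}^2(\K_1,\K_h)$ is one-dimensional when $h=s^4=1$ and zero otherwise, and the nonzero class has internal degree $4$. The case $h=g$ therefore gives $\HH^2_{1,g}=0$ in both cases $s=1$ and $s=g$, since $s^4=1\ne g$.

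Next I would verify that $\omega_s$ is a cocycle by expanding $(\Delta\otimes\id)\omega_s-(\id\otimes\Delta)\omega_s$ using the coproducts of $x$, $y$ and $xy$ in $L$. It is convenient to view $\omega_s$ as $\bar\Delta$ of the would-be element $y^2$ (or a degree-$4$ lift) in the free extension, which makes the cocycle identity automatic: $\omega_s$ is $d^1$ of a degree-$4$ element in the graded tensor coalgebra. To show nontriviality, I would observe that $\omega_s$ contains the term $y\otimes y$ of bidegree $(2,2)$, whereas every coboundary $d^1_{1,1}(b)$ with $b\in L$ has degree at most $3$. By the coproduct formulas, a coboundary contains a $y\otimes y$ term only if $b$ has a degree-$4$ component, and there is none. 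Any $b$ with nonzero coefficient of $y\otimes y$ in its coboundary is impossible, so $[\omega_s]\ne0$. Combined with the dimension bound, this gives $\HH^2_{1,1}(L)=\K[\omega_s]$.

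The main obstacle is the bookkeeping in the non-graded case. The relations of $L$, such as $[g,x]=a(1+g)$ or $x^2=ax$, may add lower-order terms to $\Delta(xy)$, so the cocycle identity for $\omega_s$ needs a genuine check. If it fails, I would correct $\omega_s$ by lower-degree terms; the statement only requires the class, and the leading $y\otimes y$ argument still detects nontriviality.
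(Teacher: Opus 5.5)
Your strategy is the paper's. You identify the coalgebra of $L$ with the truncated path coalgebra of the quiver with arrows $t\to ts$. You compute $\operatorname{Ext}^2$ over the Nakayama dual via $\Omega^2(S_i)\cong S_{is^4}=S_i$, and you detect $[\omega_s]\neq0$ by degree, since $L(4)=0$. Those parts are fine. The degree argument even works with only the coradical filtration: $d^1_{1,1}(L)\subseteq\sum_{i+j=3}L_i\otimes L_j$, while $\omega_s$ has nonzero image $\bar y\otimes\bar y$ in $\gr_2L\otimes\gr_2L$.

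The gap is the step you defer as ``the main obstacle''. Nothing in the proposal shows that $\omega_s$ is a cocycle of $L$, nor does it prove your opening claim that all these algebras share one coalgebra. Both rest on the identity
\[
\Delta(xy)=xy\otimes1+s\otimes xy+x\otimes y+ys\otimes x
\]
in $L$. The paper obtains it by expanding $\Delta(x)\Delta(y)$. For $s=1$ the terms $x^2\otimes x+x\otimes x^2$ cancel in characteristic $2$. For $s=g$ the extra terms produced by $x^2=\epsilon x$, $gy=yg+\epsilon(x+1)q$ and $gxg=x+\epsilon q$ sum to $(\epsilon+\epsilon^2)q\otimes x=0$.

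Your displayed formula is not this: for $s=g$ the term $xs\otimes y$ should be $x\otimes y$, and the last term has no meaning. Your fallback of correcting $\omega_s$ by lower-degree terms would not prove the lemma, which asserts that this particular $\omega_s$ is a cocycle spanning $\HH^2_{1,1}(L)$. The detour through $\gr L$ and the bound $\dim\HH^2_{g,h}(L)\le\dim\HH^2_{g,h}(\gr L)$ does not avoid the issue either, because it still needs $\omega_s\in Z^2_{1,1}(L)$.

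Your choice of basis is also wrong. With $t$ on the left, the coproduct is not homogeneous when $s=g$ and $\epsilon=1$. Since $xg=gx+q$, already $\Delta(y)=y\otimes1+1\otimes y+gx\otimes x+q\otimes x$ has a term of degree $1$. The paper uses $1,g,x,xg,y,yg,xy,xyg$, i.e.\ $t$ on the right, so that $\Delta(bt)=\Delta(b)(t\otimes t)$ is homogeneous once $\Delta(xy)$ is.

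With that basis and the identity above, $L$ itself is the truncated path coalgebra, and no passage to $\gr L$ is needed. Your heuristic then becomes a proof. The element $\omega_s$ is the reduced coproduct of the length-$4$ path in the path coalgebra truncated at length $4$ (not of ``$y^2$''), hence a cocycle by $d^2_{1,1}d^1_{1,1}=0$.

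Finally, for $s=g$ the span of $t,tx,ty,txy$ is not a subcoalgebra: the quiver is a $2$-cycle and the coalgebra is indecomposable. These spans are only uniserial left comodules, which is harmless for the Ext computation.
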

	\begin{proof}
		By \eqref{eq:section5-base-coproduct} and the relations of
		Corollary~\ref{cor:order-two-base},
		\[
		\Delta(xy)=xy\otimes1+s\otimes xy+x\otimes y+ys\otimes x.
		\]
		For $s=1$, the terms $x^2\otimes x+x\otimes x^2$ cancel in
		characteristic $2$. For $s=g$, the cancellation uses
		$x^2=\epsilon x$ and $[g,y]=\epsilon(x+1)q$. Hence the eight
		elements
		\[
		1,\ g,\ x,\ xg,\ y,\ yg,\ xy,\ xyg
		\]
		form a basis of $L$, and the coproduct makes $L$ the truncated path
		coalgebra on the quiver with vertices $\{1,g\}$ and one arrow from
		each vertex to its translate by $s$, keeping only paths of length
		$\le3$. Assign path length as internal degree.
		
		The dual algebra $L^*$ is the path algebra of the opposite quiver
		modulo paths of length at least $4$. For a vertex $i$, the
		projective cover $P_i$ of the simple module $S_i$ has radical
		filtration with successive composition factors
		\[
		S_i,\ S_{i\cdot s},\ S_{i\cdot s^2},\ S_{i\cdot s^3}.
		\]
		Since $s^2=1$, this sequence is
		\[
		S_i,\ S_{i\cdot s},\ S_i,\ S_{i\cdot s}.
		\]
		Hence
		\[
		\Omega^2(S_i)\cong S_i
		\]
		(and, with the path-length grading,
		$\Omega^2(S_i)\cong S_i(-4)$ up to the grading-shift convention).
		Thus the minimal projective resolution is periodic of period $2$,
		and
		\[
		\operatorname{Ext}^2_{L^*}(S_i,S_j)=
		\begin{cases}
			\K,&j=i,\\
			0,&j\ne i.
		\end{cases}
		\]
		By the identification \eqref{eq:coalgebra-ext},
		\[
		\HH^2_{1,1}(L)=\K,\qquad \HH^2_{1,g}(L)=0.
		\]
		
		Direct substitution of the coproducts of $x,y,xy$ shows that
		\[
		\omega_s=xys\otimes x+xs\otimes xy+y\otimes y
		\]
		satisfies $d^2_{1,1}(\omega_s)=0$. Since
		\[
		\deg(xys\otimes x)=3+1=4,\quad
		\deg(xs\otimes xy)=1+3=4,\quad
		\deg(y\otimes y)=2+2=4,
		\]
		$\omega_s$ is homogeneous of internal degree $4$. The differential
		$d^1$ preserves internal degree, but $L(4)=0$. Therefore no
		coboundary can equal $\omega_s$, and $[\omega_s]$ spans
		$\HH^2_{1,1}(L)$.
	\end{proof}

	\subsection{Extensions with primitive support}
	
	For $s=1$, the base algebra $L=L_0(\epsilon,\beta,\mu)$ has relations
	\begin{gather*}
		g^2=1,\qquad [g,x]=[x,y]=0,\qquad [g,y]=\beta gx,\\
		x^2=\epsilon x,\qquad y^2=\epsilon y+\mu x,
	\end{gather*}
	where $\epsilon\in\{0,1\}$ and
	$\epsilon\beta(\epsilon+\beta)=0$.
	
	\begin{pro}\label{pro:H2-dim8-m0}
		For this base algebra,
		\[
		\HH^2_{1,1}(L)=\K[\omega_0],\qquad
		\omega_0=xy\otimes x+x\otimes xy+y\otimes y,
		\qquad \HH^2_{1,g}(L)=0.
		\]
	\end{pro}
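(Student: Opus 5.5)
This is the case $s=1$ of Lemma~\ref{lem:section5-cohomology}, so the plan is to specialize that argument and then check the formula for $\omega_0$ directly.

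\textbf{Step 1 (coalgebra structure).} We first record the coproduct of $xy$ in $L_0(\epsilon,\beta,\mu)$. Since $[x,y]=0$ and $\Char\K=2$, multiplying $\Delta(x)=x\otimes1+1\otimes x$ by $\Delta(y)=y\otimes1+1\otimes y+x\otimes x$ gives
\[
\Delta(xy)=xy\otimes1+1\otimes xy+x\otimes y+y\otimes x+x^2\otimes x+x\otimes x^2 .
\]
Here $x^2\otimes x+x\otimes x^2=\epsilon(x\otimes x+x\otimes x)=0$. Thus $1,x,y,xy$ span a graded connected subcoalgebra with degrees $0,1,2,3$. Together with $g$, this identifies $L$ as a coalgebra with the truncated path coalgebra of the quiver with vertices $\{1,g\}$ and one loop at each vertex, keeping paths of length $\le 3$. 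This is a disjoint union of two copies, one at $1$ and one at $g$. Only the coalgebra structure enters $\HH^2$, so the relations involving $\beta$ and $\mu$ play no role.

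\textbf{Step 2 (dimension count).} By \eqref{eq:coalgebra-ext}, $\HH^2_{1,t}(L)\cong\operatorname{Ext}^2_{L^*}(\K_1,\K_t)$. The algebra $L^*$ is a product of two copies of $\K[X]/(X^4)$, one supported at each vertex. Over $\K[X]/(X^4)$ the minimal resolution of the trivial module is $2$-periodic, so $\operatorname{Ext}^2(\K,\K)=\K$. The simples $\K_1$ and $\K_g$ lie in different blocks, hence
\[
\operatorname{Ext}^2_{L^*}(\K_1,\K_g)=0 .
\]
This gives $\dim\HH^2_{1,1}(L)=1$ and $\HH^2_{1,g}(L)=0$.

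\textbf{Step 3 (explicit representative).} We verify $d^2_{1,1}(\omega_0)=0$ by substituting the coproducts of $x$, $y$ and $xy$, grouping terms by tensor type. The only delicate terms are $x\otimes x\otimes x$-type contributions and the triples $x\otimes y\otimes x$ and $y\otimes x\otimes x$. Each appears twice and cancels in characteristic $2$.

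For nontriviality, we grade $L$ by path length: $\deg x=1$, $\deg y=2$, $\deg xy=3$, with $g$ in degree $0$. Then $\omega_0$ is homogeneous of degree $4$. The differential $d^1$ preserves degree and $L(4)=0$, so a coboundary $d^1(b)$ has degree-$4$ component $d^1(b_4)=0$. A nonzero homogeneous degree-$4$ cocycle therefore cannot be a coboundary, and $[\omega_0]\neq0$ spans $\HH^2_{1,1}(L)$.

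The main obstacle is Step 1: confirming that the coradical filtration of $L$ really is the truncated path coalgebra, which rests on the cancellation in $\Delta(xy)$. Once that holds, the Ext computation in Step 2 is standard, and Step 3 is bookkeeping.
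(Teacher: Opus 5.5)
Your proposal is correct and follows the paper's argument: the paper proves this by applying Lemma~\ref{lem:section5-cohomology} with $s=1$, which likewise computes $\Delta(xy)$ using the characteristic-$2$ cancellation, identifies $L$ with the truncated path coalgebra on paths of length $\le 3$, obtains $\operatorname{Ext}^2_{L^*}(S_i,S_j)$ from the $2$-periodic resolution, and then proves $[\omega_0]\neq0$ by the same degree-$4$ argument using $L(4)=0$ (here the paper's $\Omega^2(S_i)\cong S_i$ is your block computation over $\K[X]/(X^4)$). One small correction to your bookkeeping: $d^2_{1,1}(\omega_0)$ is homogeneous of degree $4$, so no $x\otimes x\otimes x$ terms occur, and each degree-$4$ triple appears exactly once on each side of the cocycle identity, so characteristic $2$ is needed only for $\Delta(xy)$ and not for the cocycle check.
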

	\begin{proof}
		Apply Lemma~\ref{lem:section5-cohomology} with $s=1$.
	\end{proof}
	
	\begin{lem}\label{lem:m0-dim16-extension}
		If the infinitesimal braiding of $H$ is supported at $1$, a generator
		$z$ can be chosen with
		\begin{equation}\label{eq:section5-primitive-z}
			\Delta(z)=z\otimes1+1\otimes z+\omega_0.
		\end{equation}
		It has filtration degree $4$ and $H=L\oplus Lz$ as a free left
		$L$-module. The monomials $g^ix^jy^kz^\ell$,
		$i,j,k,\ell\in\{0,1\}$, form a basis.
	\end{lem}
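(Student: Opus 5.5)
We follow the pattern of Lemma~\ref{lem:section4-y}, with $L=\langle H_2\rangle=H_3$ from Lemma~\ref{lem:section5-base}. We have $\dim L=8$, $\dim H=16$ and $L_0=H_0=\K[\G(H)]$. Theorem~\ref{thm:classification-index-two} therefore makes $H$ a normalized quadratic extension of $L$. This gives $z\in\ker\epsilon_H$ with $H=L\oplus Lz$ as free left $L$-module and
\[
\Delta(z)=z\otimes k+k'\otimes z+\omega,\qquad k,k'\in\G(L),\ \omega\in L^+\otimes L^+ .
\]
Replacing $z$ by $k^{-1}z$, as in Remark~\ref{rmk:unmarked-supports}, we may take $k=1$ and write the second support as $h\in\{1,g\}$. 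Lemma~\ref{lem:prelim-extension-injection} requires $D\supseteq C_1$, and this holds since $H_1\subseteq L$. Since $z\notin L$, that lemma shows $[\omega]\ne0$ in $\HH^2_{1,h}(L)$.

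Next we apply Proposition~\ref{pro:H2-dim8-m0}. Because $\HH^2_{1,g}(L)=0$, we get $h=1$. Also $[\omega]=\alpha[\omega_0]$ for some $\alpha\in\K^\times$. We then apply the gauge transformation $T_{\alpha^{-1},b}$ of Definition~\ref{def:gauge}, with $b\in L^+$ chosen to cancel the coboundary $\omega-\alpha\omega_0=d^1_{1,1}(b')$. By the remark after Definition~\ref{def:cohomology}, $b'$ automatically lies in $L^+$, since both cocycles are normalized. The new generator is $\alpha^{-1}(z+b')$ up to sign, which in characteristic $2$ is irrelevant. It satisfies \eqref{eq:section5-primitive-z}, and $\{1,z\}$ is still a free left $L$-basis because the change is triangular with invertible leading coefficient.

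For the filtration degree, every term of $\omega_0$ lies in $\sum_{i+j=4}H_i\otimes H_j$, using $x\in H_1$, $y\in H_2$ and $xy\in H_3$. By the definition of the coradical filtration this gives $z\in H_4$. We have $z\notin H_3=L$, so the degree is exactly $4$. The basis statement follows from $H=L\oplus Lz$ together with the basis $\{g^ix^jy^k\}$ of $L$ from Corollary~\ref{cor:order-two-base}, which gives the sixteen monomials $g^ix^jy^kz^\ell$.

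The main obstacle is the bookkeeping that the generator change preserves the normalized form. This needs two checks: the coboundary removal uses an element of $L^+$, and the passage $z\mapsto k^{-1}z$ keeps $\omega$ normalized. Both follow from the cited remarks, so the argument is essentially an application of the index-two theorem plus the cohomology computation of Lemma~\ref{lem:section5-cohomology}.
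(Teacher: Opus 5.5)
Your proposal is correct and follows the paper's proof essentially step for step: Theorem~\ref{thm:classification-index-two} gives the normalized quadratic extension, the change $z\mapsto k^{-1}z$ normalizes the support, Lemma~\ref{lem:prelim-extension-injection} and Proposition~\ref{pro:H2-dim8-m0} force $h=1$ and $[\omega]=\alpha[\omega_0]$, and a gauge change followed by the filtration and basis arguments finishes the proof. The only difference is cosmetic: you normalize the support before invoking Lemma~\ref{lem:prelim-extension-injection} and explicitly check its hypothesis $H_1\subseteq L$, which spares you the paper's separate remark that the class stays nonzero after the change of generator.
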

	
	\begin{proof}
		Since $L=H_3$, $H_0=L_0$, and $\dim H=2\dim L$,
		Theorem~\ref{thm:classification-index-two} shows that $H$ is a
		normalized quadratic extension of $L$. Thus there exists
		$z\in\ker\epsilon_H$ such that $H=L\oplus Lz$ as a free left
		$L$-module and
		\[
		\Delta(z)=z\otimes g+h\otimes z+\omega
		\]
		for some $g,h\in\G(L)$ and $\omega\in L^+\otimes L^+$.
		
		By Lemma~\ref{lem:prelim-extension-injection}, the class
		$[\omega]\in\HH^2_{g,h}(L)$ is nonzero. Replacing $z$ by $g^{-1}z$
		as in Remark~\ref{rmk:unmarked-supports}, we may normalize the
		support to $(1,g^{-1}h)$. The corresponding coproduct correction
		still represents a nonzero cohomology class.
		Proposition~\ref{pro:H2-dim8-m0} then gives $g^{-1}h=1$ and shows
		that the correction class is a nonzero scalar multiple of
		$[\omega_0]$. After rescaling $z$ and adding an element of $L^+$ to
		remove the resulting coboundary, we may therefore choose $z$ so that
		\[
		\Delta(z)=z\otimes1+1\otimes z+\omega_0.
		\]
		
		Since $\omega_0\in L\otimes L=H_3\otimes H_3$, the preceding
		coproduct formula gives
		\[
		\Delta(z)\in H_0\otimes H+H\otimes H_3,
		\]
		and hence $z\in H_4$. On the other hand, $z\notin L=H_3$, so $z$
		has filtration degree exactly $4$.
		
		Since $H=L\oplus Lz$ and $L$ has basis
		$\{g^ix^jy^k:i,j,k\in\{0,1\}\}$, the monomials
		$g^ix^jy^kz^\ell$, $i,j,k,\ell\in\{0,1\}$, form a basis of $H$.
	\end{proof}
	
	\begin{lem}\label{lem:m0-dim16-relations}
		With the generators above, all relations of $H$ are those of
		$H_0(\epsilon,\beta,\gamma,\lambda,\mu,\nu)$:
		\begin{gather}
			g^2=1,\quad [g,x]=[x,y]=[x,z]=0,\quad [g,y]=\beta gx,
			\label{eq:section5-m0-base}\\
			[g,z]=\beta gxy+\beta^2gy+\gamma gx,\qquad [y,z]=\lambda x,
			\label{eq:section5-m0-comm}\\
			x^2=\epsilon x,\quad y^2=\epsilon y+\mu x,\quad
			z^2=\epsilon z+(\epsilon\mu+\lambda)xy+\mu^2y+\nu x.
			\label{eq:section5-m0-square}
		\end{gather}
		Here $\epsilon\in\{0,1\}$, the other parameters belong to $\K$, and
		the necessary and sufficient restrictions are
		\begin{equation}\label{eq:section5-m0-conditions}
			\begin{cases}
				\beta=0,\quad \mu\lambda=0,&\epsilon=0,\\
				\beta\in\{0,1\},\quad\lambda=0,\quad
				\gamma^2+\gamma=\beta(\mu^2+\mu),&\epsilon=1.
			\end{cases}
		\end{equation}
		Conversely these relations and \eqref{eq:section5-primitive-z} define
		a Hopf algebra of dimension $16$ with the required diagram.
	\end{lem}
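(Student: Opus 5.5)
We determine the Ore data $(\sigma,\delta,u,v)$ of the normalized quadratic extension $H=L\oplus Lz$ from Lemma~\ref{lem:m0-dim16-extension}, following the pattern of Lemma~\ref{lem:section4-y} and Proposition~\ref{pro:primitive-support-data}.

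\textbf{Step 1: $\sigma=\id_L$.} Put $\chi=\epsilon\sigma$. Every multiplicative character of $\Z_2$ is trivial in characteristic $2$, so \eqref{eq:comp1} with $g=h=1$ gives $\sigma(g)=g$. It also gives $\sigma(x)=x+\chi(x)$ and $\sigma(y)=y+\chi(x)x+\chi(y)$. Preservation of $x^2=\epsilon x$ and $y^2=\epsilon y+\mu x$ forces $\chi(x)^2=\epsilon\chi(x)$. If $\chi\ne 0$, then \eqref{eq:comp3} on $x$ or $y$ would express a multiple of $x\otimes x$ or $\omega_{\mathrm{base}}$ as a coboundary. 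This contradicts Proposition~\ref{pro:H2-base} and Lemma~\ref{lem:section5-cohomology}; the argument repeats the one in Lemma~\ref{lem:section4-y}. Hence $\delta=[z,-]$ is an ordinary derivation of $L$.

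\textbf{Step 2: shape of the commutators.} Use \eqref{eq:comp3} for $a=g,x,y$. For $a=x$, the element $[z,x]$ satisfies $d^1_{1,1}([z,x])=[\omega_0,\Delta(x)]$. Expanding with $[x,y]=0$ and $x^2=\epsilon x$, this right side vanishes. So $[x,z]\in\Pp(L)=\K x$, say $[x,z]=\rho x$.
For $a=y$, the right side equals $\rho(x\otimes x)$ plus terms that cancel in characteristic $2$. Subtracting a suitable multiple of $xy$ (or of $y$) leaves a primitive element, giving $[y,z]=\lambda x+\rho'(\cdots)$.
For $a=g$, compare $d^1_{g,g}([g,z]g^{-1})$ with the explicit value $g\omega_0 g^{-1}-\omega_0$, which is computed from $[g,y]=\beta gx$. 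This identifies the particular solution $\beta gxy+\beta^2gy$. The remaining freedom lies in $\Pp_{g,g}=\K gx$, which gives the parameter $\gamma$.
Then apply $\delta$ to the relations $x^2=\epsilon x$ and $y^2=\epsilon y+\mu x$, and use the overlap identities \eqref{eq:assoc-wwa-const}--\eqref{eq:assoc-wwa-coef}. As in Proposition~\ref{pro:primitive-support-data}, this kills $\rho$ and the correction terms.

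\textbf{Step 3: the square $z^2=u+vz$.} Compute $\Delta(z)^2$ in $A\otimes A$ using $\omega_0^2$, $[z\otimes 1,\omega_0]$, and so on. The result is $\Delta(z^2)=\Delta(\epsilon z+P)+(\text{primitive})$, with $P=(\epsilon\mu+\lambda)xy+\mu^2y$. Hence $v=\epsilon$ and $u=P+\nu x$; the coefficient of $v$ is read off from the $z\otimes1$ term, exactly as for $y^2$.

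\textbf{Step 4: the restrictions \eqref{eq:section5-m0-conditions}.} Impose the remaining identities of Proposition~\ref{pro:associative-data}: $\delta^2=[u,-]+\epsilon\delta$, $\delta(u)=0$, and $\delta(v)=0$. Also require that $\delta$ respects $[g,y]=\beta gx$ and $g^2=1$.
Evaluating $\delta^2(g)+\epsilon\delta(g)+[u,g]$ produces coefficients of $gx$, $gy$ and $gxy$. When $\epsilon=0$ these give $\beta=0$. When $\epsilon=1$ they give $\beta\in\{0,1\}$ and $\gamma^2+\gamma=\beta(\mu^2+\mu)$. The identity $\delta(u)=0$ yields $\mu\lambda=0$, respectively $\lambda=0$ when $\epsilon=1$.

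\textbf{Converse.} Check that the listed $\delta$, $u$, $v$ satisfy Definition~\ref{def:extension-data}, then apply Proposition~\ref{pro:reconstruction}. The diagram is eight-dimensional by Lemma~\ref{lem:section5-base}, and $z$ has degree $4$ by Lemma~\ref{lem:m0-dim16-extension}.

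The main obstacle is the bookkeeping in Steps 2 and 4, especially the $[g,z]$ computation and $\delta^2(g)$. I would organize it using the coalgebra grading by path length, so that only finitely many homogeneous components need comparison.
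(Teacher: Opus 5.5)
Your framework is the paper's: Ore data for $H=L\oplus Lz$, coproduct comparison against $\Pp(H)=\K x$ and $\Pp_{g,g}(H)=\K gx$, then the associativity identities. Steps 1--3 reach the correct normal form: $\delta(g)=gw$ with $w=\beta xy+\beta^2y+\gamma x$, $\delta(x)=0$, $\delta(y)=\lambda x$, $u=(\epsilon\mu+\lambda)xy+\mu^2y+\nu x$, $v=\epsilon$.

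The gap is in Step 4, which is where \eqref{eq:section5-m0-conditions} is actually derived. Two of the restrictions are credited to identities that do not yield them.

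\textbf{The two failed deductions.} For $\epsilon=0$, the identity $\delta^2(g)=[u,g]+\epsilon\delta(g)$ reduces to $(\beta^4\mu+\beta^2\lambda+\beta\mu^2)\,gx=0$. This holds for every $\beta$ when $\mu=\lambda=0$, so it does not force $\beta=0$. Next, $\delta(u)=\lambda(\epsilon\mu+\epsilon\lambda+\mu^2)x$. For $\epsilon=1$ the condition $\delta(u)=0$ therefore gives only $\lambda(\lambda+\mu^2+\mu)=0$, not $\lambda=0$. The error propagates. For $\epsilon=1$ (so $\beta^2=\beta$) the $gx$-coefficient of $\delta^2(g)+[u,g]+\delta(g)$ is $\gamma^2+\gamma+\beta\lambda+\beta(\mu^2+\mu)$. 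You cannot read off $\gamma^2+\gamma=\beta(\mu^2+\mu)$ until $\lambda=0$ is established by other means.

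\textbf{The missing inputs.} What you need are the paper's overlaps $zg^2$ and $zy^2$. Both are among the checks you list, but you never draw these consequences from them.
First, $\delta(g)g+g\delta(g)=\beta^2(\beta+\epsilon)x$ must vanish, which gives $\beta=0$ when $\epsilon=0$.
Second, applying $\delta$ to $y^2=\epsilon y+\mu x$ gives $\epsilon\lambda x=0$; equivalently, use $\delta^2(y)=[u,y]+\epsilon\delta(y)$.
Only then does $\delta(u)=0$ give $\mu\lambda=0$ for $\epsilon=0$, and the $gx$-coefficient above give the $\gamma$-condition for $\epsilon=1$.

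\textbf{Two smaller points.} In Step 1, $x\otimes x=d^1_{1,1}(y)$ is a coboundary in this $L$, and Proposition~\ref{pro:H2-base} concerns a different base, so that citation gives no contradiction. What \eqref{eq:comp3} yields at $a=x$ is $d^1_{1,1}(\delta(x))=\chi(x)\,\omega_0$, and likewise at $a=y$ once $\delta(x)\in\K x$. The contradiction comes from $[\omega_0]\neq0$ in Lemma~\ref{lem:section5-cohomology}. The paper skips this step altogether, since $\Delta([x,z])$ already shows $[x,z]\in\Pp(H)\subset L$. In the converse, citing Lemma~\ref{lem:section5-base} is circular, because that lemma presupposes the diagram. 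You must check directly that the reconstructed algebra has $H_1=\K[\Z_2]\oplus\K x\oplus\K gx$.
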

	\begin{proof}
		Since $\Pp(H)=\K x$ and
		$\Pp_{g,g}(H)=\K gx$,  comparing coproducts gives
		\[
		[g,z]-\beta gxy-\beta^2gy\in\K gx,\qquad [x,z]=d x.
		\]
		Moreover,
		\[
		\Delta([y,z])=[y,z]\otimes1+1\otimes[y,z]
		+[x,z]\otimes x+x\otimes[x,z].
		\]
		The last two terms cancel, so $[y,z]=\lambda x$.
		The base relations give
		\[
		\omega_0^2=\epsilon\omega_0+
		\epsilon\mu(x\otimes y+y\otimes x)+\mu^2x\otimes x.
		\]
		The terms involving $d$ cancel in the square coproduct. Subtracting
		$\epsilon z+(\epsilon\mu+\lambda)xy+\mu^2y$ from $z^2$ leaves a
		primitive element, giving \eqref{eq:section5-m0-square}.
		The overlaps $zx^2$ and $z^2x$ give $\epsilon d=0$ and
		$d^2+\epsilon d=0$, hence $d=0$.
		
		Put $\delta=[z,-]$ on $L$ and
		$u=(\epsilon\mu+\lambda)xy+\mu^2y+\nu x$, $v=\epsilon$.
		The remaining restrictions are obtained from the following reduced
		overlaps (the last row uses the preceding restrictions):
		\[
		\begin{array}{c|l}
			z g^2&\beta^2(\beta+\epsilon)x\\
			z y^2&\epsilon\lambda x\\
			z^3&\lambda(\epsilon^2\mu+\epsilon\lambda+\mu^2)x\\
			z^2g&\epsilon\bigl(\gamma^2+\gamma+
			\beta(\mu^2+\mu)\bigr)gx.
		\end{array}
		\]
		For $\epsilon=0$, the first row forces $\beta=0$, and the third
		forces $\mu\lambda=0$. For $\epsilon=1$, the first two give
		$\beta\in\{0,1\}$ and $\lambda=0$, and the last gives the remaining
		equation in \eqref{eq:section5-m0-conditions}.
		
		Under these restrictions, the displayed values of $\delta$ preserve
		all relations of $L$ and satisfy
		$\delta^2=[u,-]+v\delta$, $\delta(u)=\delta(v)=0$.
		The coproduct identities on $g,x,y$ and on $z^2$ follow by substitution.
		Proposition~\ref{pro:reconstruction} gives the PBW basis and Hopf
		structure. The associated graded coalgebra has one path in each
		degree $0$ through $7$ at each vertex, so the infinitesimal braiding
		has dimension one and the diagram has dimension $8$.
	\end{proof}
	
	\begin{pro}\label{prop:m0-dim16-iso}
		Use primes for the parameters and generators of a second algebra
		$H_0'$. No isomorphism changes $\epsilon$.
		If $\epsilon=\epsilon'=0$, the two algebras are isomorphic exactly
		when there exist $A\in\K^\times$ and $U\in\K$ such that
		\begin{gather}
			\mu=A^3\mu',\qquad \lambda=A^5\lambda',\qquad
			\gamma=A^3\gamma',\label{eq:section5-m0-iso0}\\
			\nu=A^7\bigl(\nu'+U^4\mu'+U^2\lambda'+U(\mu')^2\bigr).
			\label{eq:section5-m0-nu0}
		\end{gather}
		If $\epsilon=\epsilon'=1$, they are isomorphic exactly when
		$\beta=\beta'$ and there exist $B,C\in\K$ such that
		\begin{gather}
			\mu=\mu'+B^2+B,\qquad
			\gamma=\gamma'+\beta(B^2+B),\label{eq:section5-m0-iso1}\\
			\nu=\nu'+(B^4+B^2)\mu'+B(\mu^2+\mu)+C^2+C.
			\label{eq:section5-m0-nu1}
		\end{gather}
	\end{pro}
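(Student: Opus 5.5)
The plan is to determine the general form of a Hopf isomorphism $\Phi:H_0(\epsilon,\beta,\gamma,\lambda,\mu,\nu)\to H_0(\epsilon',\dots)$ and then transport the relations of Lemma~\ref{lem:m0-dim16-relations}.

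\textbf{Shape of $\Phi$.} By Lemma~\ref{lem:section5-base}, $\Phi$ preserves $L=H_3$ and the coradical filtration. Hence $\Phi(g)=g'$, since $\G(H)=\Z_2$. Since $\Pp(H')=\K x'$, we get $\Phi(x)=Ax'$ with $A\in\K^\times$. Comparing the coproduct of $\Phi(y)$ with \eqref{eq:section5-base-coproduct}, $\Phi(y)-A^2y'$ is primitive, so $\Phi(y)=A^2y'+Bx'$. Lemma~\ref{lem:generator-rigidity}, applied with equal supports $(1,1)$, gives $\Phi(z)=\alpha z'+b$ with $b\in L^+$. Matching the coefficient of $\omega_0$ forces $\alpha=A^4$. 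The element $b$ is then pinned down by the requirement $d^1_{1,1}(b)=(\Phi\otimes\Phi)\omega_0-A^4\omega_0'$. Expanding $(\Phi\otimes\Phi)\omega_0$ with $\Phi(y)=A^2y'+Bx'$ shows this right-hand side is $d^1$ of an explicit combination of $xy$, $y$, $x$. The $x$-coordinate of $b$ is free, say $A^4C$ or $A^4U$, because $\Pp(H')=\K x'$.

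\textbf{Transporting the relations.} First apply $\Phi$ to $x^2=\epsilon x$. This gives $A^2\epsilon' x'=A\epsilon x'$, so $\epsilon=A\epsilon'$. Thus $\epsilon$ is invariant, and $A=1$ whenever $\epsilon=1$. Next apply $\Phi$ to $y^2=\epsilon y+\mu x$ and to $[g,y]=\beta gx$.

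In the case $\epsilon=0$ we have $\beta=0$ and $[g,y]=0$. These relations give $\mu=A^3\mu'$, and $B$ stays free; I rename it via $B=A^2U$ to match \eqref{eq:section5-m0-nu0}. In the case $\epsilon=1$ we have $A=1$, and the same relations give $\mu=\mu'+B^2+B$ and $\beta=\beta'$.

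Then apply $\Phi$ to $[g,z]$, $[y,z]=\lambda x$, and $z^2$. Reduce each with the relations of $H_0'$ and compare coefficients on the PBW basis $g^ix^jy^kz^\ell$. This produces the scalings of $\gamma$ and $\lambda$, together with the formula for $\nu$. The $\nu$ formula arises from the $x'$-coefficient of $\Phi(z)^2$ once $\Phi(z)=A^4z'+A^4\bigl(B xy\text{-type terms}\bigr)+A^4Ux'$ is substituted. In characteristic $2$ the cross terms reduce to squares and Frobenius powers, which explains the terms $U^4\mu'$, $U^2\lambda'$, $U(\mu')^2$, and the Artin--Schreier term $C^2+C$ when $\epsilon=1$.

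\textbf{Converse and main obstacle.} For the converse, given parameters satisfying \eqref{eq:section5-m0-iso0}--\eqref{eq:section5-m0-nu1}, I define $\Phi$ by the displayed formulas. The checks already done show it respects all defining relations and the coproduct. It is invertible because it is triangular on the PBW basis. The main obstacle is the bookkeeping in $\Phi(z)^2$ and $[\Phi(g),\Phi(z)]$. One must first determine $b$ exactly, including its $xy$, $y$ and $x$ components as functions of $A$ and $B$. One must then reduce products such as $(xy)^2$, $y\cdot xy$ and $z'\cdot xy$ using $[y,z]=\lambda x$, $[g,z]$, and the squares. Here I would work first with $\epsilon=0$, where $\beta=0$ makes $g$ central modulo $\gamma$, and then redo the computation for $\epsilon=1$ with $A=1$, tracking carefully the coefficient of $\beta$ in the $gx$-term of $[g,\Phi(z)]$ to obtain $\gamma=\gamma'+\beta(B^2+B)$.
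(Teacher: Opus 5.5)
Your proposal is correct and follows the paper's own proof: you fix $\Phi(g)=g'$, use $\Pp(H')=\K x'$ and the coproducts to force $\Phi(x)=Ax'$, $\Phi(y)=A^2y'+Bx'$ and $\Phi(z)=A^4z'+A^2Bx'y'+B^2y'+Cx'$, read off the conditions from $x^2$, $[g,y]$, $y^2$, $[g,z]$, $[y,z]$ and the $x'$-coefficient of $\Phi(z)^2$, and get the converse from triangularity on the PBW basis. One bookkeeping correction: the $B$-terms of $\Phi(z)$ carry no factor $A^4$, and $U$ must be $B/A^2$, not the $x'$-coefficient of $\Phi(z)$; the terms $U^4\mu'$, $U^2\lambda'$, $U(\mu')^2$ come from $(B^2y')^2$, $[A^4z',B^2y']$ and $\mu^2\Phi(y)$, while the free coefficient $C$ drops out entirely when $\epsilon=0$ and enters only as $C^2+C$ when $\epsilon=1$.
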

	\begin{proof}
		The unique nontrivial group-like element is fixed. The primitive
		space and the coproducts successively force
		\begin{equation}\label{eq:section5-m0-map}
			\Phi(x)=Ax',\quad \Phi(y)=A^2y'+Bx',\quad
			\Phi(z)=A^4z'+A^2B x'y'+B^2y'+Cx'.
		\end{equation}
		The square of $x$ gives $\epsilon=A\epsilon'$, so if either epsilon
		is nonzero both are $1$ and $A=1$.
		
		For $\epsilon=0$, write $B=A^2U$. Substitution in the commutators
		and the squares of $y,z$ gives \eqref{eq:section5-m0-iso0} and
		\eqref{eq:section5-m0-nu0}; the coefficient $C$ is unrestricted.
		For $\epsilon=1$, substitution gives $\beta=\beta'$ and
		\eqref{eq:section5-m0-iso1}. Comparison of the $x'$ coefficient in
		the square of $\Phi(z)$ gives \eqref{eq:section5-m0-nu1}.
		Conversely these equations preserve every defining relation, and
		\eqref{eq:section5-m0-map} preserves the coproduct. Its leading
		coefficients on the PBW basis are nonzero, so it is an isomorphism.
	\end{proof}
	
	\begin{pro}\label{prop:m0-dim16-classification}
		For primitive support, a complete set of representatives is given by
		the following parameter tuples for $H_0$:
		\[
		\begin{array}{c|cccccc}
			&\epsilon&\beta&\gamma&\lambda&\mu&\nu\\\hline
			\mathcal T_0&0&0&0&0&0&0\\
			\mathcal T_1&0&0&0&0&0&1\\
			\mathcal N_1(t)&0&0&1&0&0&t\\
			\mathcal N_2(t)&0&0&t&0&1&0\\
			\mathcal N_3(t)&0&0&t&1&0&0\\
			\mathcal E(r,d)&1&r&d&0&0&0
		\end{array}
		\]
		Here $t\in\K$ and $(r,d)\in\{0,1\}^2$. The only parameter
		identifications are
		\begin{align*}
			\mathcal N_1(t)\cong\mathcal N_1(t')&\iff t^3=(t')^3,\\
			\mathcal N_2(t)\cong\mathcal N_2(t')&\iff t=t',\\
			\mathcal N_3(t)\cong\mathcal N_3(t')&\iff t^5=(t')^5.
		\end{align*}
		The four algebras $\mathcal E(r,d)$ are pairwise non-isomorphic, and
		no isomorphisms occur between different rows.
	\end{pro}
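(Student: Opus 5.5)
The plan is to read everything off the isomorphism criterion of Proposition~\ref{prop:m0-dim16-iso}, together with the admissibility conditions \eqref{eq:section5-m0-conditions}. Since $\epsilon$ is an invariant, the cases $\epsilon=0$ and $\epsilon=1$ are treated separately. In each case we first reduce an arbitrary admissible tuple to one of the listed normal forms. Then we show that two normal forms are related by the criterion only in the stated situations.

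\emph{Case $\epsilon=0$.} Here $\beta=0$ and $\mu\lambda=0$, and the action is $(\mu,\lambda,\gamma)\mapsto A^{-3}\mu,A^{-5}\lambda,A^{-3}\gamma$ together with the shift $\nu\mapsto A^{-7}\nu+\text{(terms in }U)$ from \eqref{eq:section5-m0-nu0}. We split according to which of $\mu,\lambda$ is nonzero.
\begin{enumerate}
\item If $\mu\ne0$ (so $\lambda=0$), scale to $\mu=1$. The map $U\mapsto U^4+U$ is surjective on the algebraically closed field $\K$, so we may take $\nu=0$. The residual freedom is $A^3=1$ with $U^4+U=0$, and this fixes $\gamma$ because $\gamma=A^3\gamma'$. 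This gives $\mathcal N_2(t)$ with $t=\gamma$ a complete invariant.
\item If $\lambda\ne0$ (so $\mu=0$), scale to $\lambda=1$. The map $U\mapsto U^2$ is surjective, so again $\nu$ can be killed. The residual freedom is $A^5=1$, which acts by $\gamma\mapsto A^3\gamma$; since $\gcd(3,5)=1$, $A^3$ runs through $U_5(\K)$. This gives $\mathcal N_3(t)$ with $t^5$ invariant.
\item If $\mu=\lambda=0$, then $U$ has no effect and the relevant scalings are $\gamma\mapsto A^{-3}\gamma$, $\nu\mapsto A^{-7}\nu$. If $\gamma\ne0$, normalize $\gamma=1$; the remaining freedom $A^3=1$ gives $\nu\mapsto A^{-7}\nu=A^{-1}\nu$, so $\nu^3$ is the invariant and we get $\mathcal N_1(t)$. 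If $\gamma=0$, then $\nu\in\{0,1\}$ gives $\mathcal T_0,\mathcal T_1$.
\end{enumerate}
Separation between rows uses the vanishing patterns of $\mu$, $\lambda$, $\gamma$, $\nu$, each of which is preserved by \eqref{eq:section5-m0-iso0}--\eqref{eq:section5-m0-nu0}.

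\emph{Case $\epsilon=1$.} Here $\lambda=0$, $\beta\in\{0,1\}$ is invariant, and $\gamma^2+\gamma=\beta(\mu^2+\mu)$. First choose $B$ with $B^2+B=\mu$ to make $\mu=0$; this forces $\gamma^2+\gamma=0$, so $\gamma\in\{0,1\}$. Next choose $C$ with $C^2+C$ equal to the appropriate element, using \eqref{eq:section5-m0-nu1}, to make $\nu=0$. The result is $\mathcal E(\beta,\gamma)$.

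For non-isomorphism of these four algebras: $\beta$ is invariant, and preserving $\mu=0$ forces $B^2+B=0$, so $B\in\{0,1\}$. Then $\gamma'=\gamma+\beta(B^2+B)=\gamma$, so $\gamma$ is invariant as well. The only delicate point is the residual freedom in $B$: when $B=1$, one must check that \eqref{eq:section5-m0-iso1} really leaves $\gamma$ fixed, and it does, since $B^2+B=0$.

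The main obstacle is bookkeeping in case~1 of $\epsilon=0$. For $\mathcal N_2$, one must confirm that the residual $A$ with $A^3=1$ and a root $U$ of $U^4+U$ cannot move $\gamma$. This follows because $\gamma=A^3\gamma'=\gamma'$, which yields the stated identification $t=t'$. The argument for $\mathcal N_1$ is the analogous check, with $\nu$ scaling by $A^{-7}=A^{-1}$ when $A^3=1$.
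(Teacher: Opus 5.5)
Your proposal is correct and follows essentially the paper's route: both arguments read the normal forms and residual stabilizers directly off Proposition~\ref{prop:m0-dim16-iso}, splitting by $\epsilon$ and by which of $\mu,\lambda$ is nonzero, and your stabilizer argument for $\mathcal E(r,d)$ is equivalent to the paper's invariant $d=\gamma+\beta\mu$ (once $\mu=\mu'=0$ you get $B\in\{0,1\}$, so $\gamma$ is fixed). One small precision: the vanishing of $\nu$ is not preserved in general, since the $U$-terms in \eqref{eq:section5-m0-nu0} can change it when $\mu'$ or $\lambda'$ is nonzero; it is preserved when $\mu=\lambda=0$, and that is the only place you use it, namely to separate $\mathcal T_0$ from $\mathcal T_1$.
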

	\begin{proof}
		Suppose first that $\epsilon=0$. If $\mu=\lambda=0$ and $\gamma=0$,
		rescaling normalizes $\nu$ to $0$ or $1$, giving $\mathcal T_0$ and
		$\mathcal T_1$. If $\gamma\ne0$, normalize $\gamma=1$; the residual
		scalings satisfy $A^3=1$ and multiply $\nu$ by $A^7=A$, giving
		$\mathcal N_1$ and its cubic identification.
		If $\mu\ne0$, then $\lambda=0$. Normalize $\mu=1$ and use the
		surjectivity of $U\mapsto U^4+U$ to remove $\nu$.
		The ratio $\gamma/\mu$ is invariant, giving $\mathcal N_2(t)$.
		If $\lambda\ne0$, then $\mu=0$. Normalize $\lambda=1$ and remove
		$\nu$ by $U\mapsto U^2$. The residual scalings satisfy $A^5=1$
		and multiply $\gamma$ by $A^3$, giving the fifth-power identification
		for $\mathcal N_3$.
		
		For $\epsilon=1$, equations \eqref{eq:section5-m0-iso1} show that
		$\beta$ and $d=\gamma+\beta\mu$ are invariant. The parameter
		constraint gives $d^2+d=0$, so $d\in\{0,1\}$.
		Choose $B$ to remove $\mu$ and then $C$ to remove $\nu$ using
		\eqref{eq:section5-m0-nu1}. This gives the four $\mathcal E(r,d)$.
		The vanishing patterns of $\mu,\lambda,\gamma$ in the zero-epsilon
		case and the two invariants in the nonzero-epsilon case establish
		all remaining non-isomorphism assertions.
	\end{proof}
	
	\subsection{Extensions with nontrivial support}
	
	For $s=g$, choose the normalized base
	$L=L_1(\epsilon,\kappa)$ from
	Corollary~\ref{cor:order-two-base}:
	\begin{gather*}
		g^2=1,\quad [g,x]=\epsilon q,\quad [g,y]=\epsilon(x+1)q,\\
		x^2=\epsilon x,\quad [x,y]=\kappa q,\quad
		y^2=\epsilon y+\kappa xq,
	\end{gather*}
	where $(\epsilon,\kappa)\in\{(0,0),(0,1),(1,0)\}$.
	
	\begin{pro}\label{pro:H2-dim8-m1}
		For this base,
		\[
		\HH^2_{1,1}(L)=\K[\omega_1],\qquad
		\omega_1=xyg\otimes x+xg\otimes xy+y\otimes y,
		\qquad \HH^2_{1,g}(L)=0.
		\]
	\end{pro}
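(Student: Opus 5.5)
This is the case $s=g$ of Lemma~\ref{lem:section5-cohomology}, and I will deduce it from that lemma, exactly as Proposition~\ref{pro:H2-dim8-m0} was deduced for $s=1$. Two things need checking. First, that $L=L_1(\epsilon,\kappa)$ satisfies the hypotheses of Lemma~\ref{lem:section5-cohomology}. Second, that the abstract generator $\omega_s$ specializes to the displayed $\omega_1$.

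For the hypotheses, $L$ is the eight-dimensional algebra of Corollary~\ref{cor:order-two-base}(2): it is $D(a,0,\lambda)$ with $h$ omitted, and $(\epsilon,\kappa)$ plays the role of $(a,\lambda)$. Its coproduct is \eqref{eq:section5-base-coproduct} with $s=g$. So Lemma~\ref{lem:section5-cohomology} applies and gives $\HH^2_{1,g}(L)=0$, together with $\HH^2_{1,1}(L)=\K[\omega_g]$, where $\omega_g=xyg\otimes x+xg\otimes xy+y\otimes y$. This is literally $\omega_1$.

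Some care is needed because the relations here include $[x,y]=\kappa q$ and $y^2=\epsilon y+\kappa xq$, which were not written out when the lemma was proved. I will therefore re-verify the two places where relations enter.

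The first is the formula
\[
\Delta(xy)=xy\otimes1+g\otimes xy+x\otimes y+yg\otimes x .
\]
I expand $\Delta(x)\Delta(y)$ and collect the extra terms $x^2g\otimes x+xg\otimes gx$. Using $x^2=\epsilon x$, $gx=xg+\epsilon q$ and $[g,y]=\epsilon(x+1)q$, these terms should cancel in characteristic $2$. This uses only the relations involving $g$ and $x^2$, so $\kappa$ never appears.

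The second is the cocycle identity $d^2_{1,1}(\omega_1)=0$. Since only coproducts of basis elements enter, this is the same direct substitution as in the lemma. Concretely, I verify
\[
(\Delta\otimes\id)(\omega_1)=(\id\otimes\Delta)(\omega_1)
\]
by writing both sides in the basis of paths of length $4$ in the truncated path coalgebra.

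The path-coalgebra and Ext argument depends only on the coalgebra structure. Once the coproduct of $xy$ has the path form, $L$ is the truncated path coalgebra of length $3$ on the quiver with vertices $1,g$ and arrows $1\to g\to 1$. Periodicity then gives the Ext computation independently of $\kappa$. Non-triviality of $[\omega_1]$ is the degree argument: $\omega_1$ is homogeneous of path degree $4$, $d^1$ preserves degree, and $L(4)=0$.

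The main obstacle is the subtlety in the degree argument. The path grading is a coalgebra grading on the basis $g^ix^jy^k$, but only after confirming that $xg$, $yg$ and $xyg$ have the expected coproducts in terms of that basis. For instance, $\Delta(xg)=xg\otimes g+1\otimes xg$. When $\epsilon=1$, conjugation by $g$ mixes $x$ with $q$, so I must use right multiplication by $g$ consistently, since $\Delta$ is multiplicative. I would check these few coproducts explicitly before invoking the lemma.
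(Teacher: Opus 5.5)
Your argument is the paper's own: Proposition~\ref{pro:H2-dim8-m1} is proved there simply by applying Lemma~\ref{lem:section5-cohomology} with $s=g$. Three of your observations are correct: $L_1(\epsilon,\kappa)$ is $D(a,0,\lambda)$ with $h$ omitted (using $a^2=a$), $\kappa$ never enters the coalgebra structure, and one must use the basis $x^jy^kg^i$ with right multiplication by $g$. The conclusion therefore stands.

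Two of your optional re-checks, however, are misstated and would fail if carried out as written.

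First, $\Delta(x)\Delta(y)=xy\otimes1+g\otimes xy+x\otimes y+gy\otimes x+x^2g\otimes x+gxg\otimes x^2$. The terms to dispose of are therefore $x^2g\otimes x+gxg\otimes x^2$, together with the difference between $gy\otimes x$ and $yg\otimes x$. The term $xg\otimes gx$ you list does not occur. It also would not cancel: for $\epsilon=0$ it equals $xg\otimes xg\neq0$. With the correct terms, everything collapses to $yg\otimes x$. For $\epsilon=1$ this uses $gxg=x+1+g$ and $gy=yg+(x+1)q$.

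Second, for $M_{1,1}$ the cocycle condition is $(\Delta\otimes\id)(\omega_1)+\omega_1\otimes1=(\id\otimes\Delta)(\omega_1)+1\otimes\omega_1$. Equivalently, your identity holds once the coproducts are replaced by reduced coproducts. The identity $(\Delta\otimes\id)(\omega_1)=(\id\otimes\Delta)(\omega_1)$ itself is false. For instance, $1\otimes y\otimes y$ appears on the left, while every term on the right has first factor $xyg$, $xg$ or $y$. In the correct form, the twelve terms on each side match exactly.
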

	\begin{proof}
		Apply Lemma~\ref{lem:section5-cohomology} with $s=g$.
	\end{proof}
	
	\begin{lem}\label{lem:m1-dim16-coproduct}
		If the infinitesimal braiding of $H$ is supported at $g$, there is a
		generator $z$ of filtration degree $4$ with
		\begin{equation}\label{eq:section5-nonprimitive-z}
			\Delta(z)=z\otimes1+1\otimes z+xyg\otimes x+xg\otimes xy+y\otimes y.
		\end{equation}
		The monomials $g^ix^jy^kz^\ell$, $i,j,k,\ell\in\{0,1\}$, form a
		basis, and $H=L\oplus Lz$ as a free left $L$-module.
	\end{lem}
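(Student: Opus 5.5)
The plan is to follow the proof of Lemma~\ref{lem:m0-dim16-extension} verbatim, with Proposition~\ref{pro:H2-dim8-m1} in place of Proposition~\ref{pro:H2-dim8-m0}. By Lemma~\ref{lem:section5-base}, $L=\langle H_2\rangle=H_3$ is an eight-dimensional Hopf subalgebra with $L_0=H_0=\K[\G(H)]$. Since $\dim H=16=2\dim L$, Theorem~\ref{thm:classification-index-two} applies and $H$ is a normalized quadratic extension of $L$. This gives $z\in\ker\epsilon_H$ with $H=L\oplus Lz$ freely as a left $L$-module and
\[
\Delta(z)=z\otimes g_1+h_1\otimes z+\omega,
\qquad g_1,h_1\in\G(L),\quad \omega\in L^+\otimes L^+.
\]

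Next I show that the coproduct correction is cohomologically nontrivial. Since $z\notin L$ and $H_1\subseteq L$, Lemma~\ref{lem:prelim-extension-injection}, applied with $C=H$ and $D=L$, shows that $[\omega]\neq0$ in $\HH^2_{g_1,h_1}(L)$. I then replace $z$ by $g_1^{-1}z$ as in Remark~\ref{rmk:unmarked-supports}. This normalizes the support to $(1,g_1^{-1}h_1)$, and the new correction $(g_1^{-1}\otimes g_1^{-1})\omega$ still represents a nonzero class, because left multiplication by a group-like element induces an isomorphism of the relevant complexes. Proposition~\ref{pro:H2-dim8-m1} states that $\HH^2_{1,g}(L)=0$, so the second support must be $1$. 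The correction class is then $\alpha[\omega_1]$ for some $\alpha\in\K^\times$. A gauge transformation $z\mapsto\alpha^{-1}z+b$ with $b\in L^+$ (Definition~\ref{def:gauge}) removes the coboundary and yields \eqref{eq:section5-nonprimitive-z}. The only point that needs care is that $b$ can be taken in $L^+$ so that $\epsilon(z)=0$ is preserved; this is exactly the remark following Definition~\ref{def:cohomology}.

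For the filtration degree, note that $\omega_1\in H_3\otimes H_3$, so $\Delta(z)\in H_0\otimes H+H\otimes H_3$ and hence $z\in H_4$. Since $z\notin H_3=L$, its filtration degree is exactly $4$. Finally, $L$ has basis $g^ix^jy^k$ with $i,j,k\in\{0,1\}$ by Corollary~\ref{cor:order-two-base}. Because normalization only multiplied $z$ by a unit of $L$ and added an element of $L$, the decomposition $H=L\oplus Lz$ still holds, and therefore the monomials $g^ix^jy^kz^\ell$ form a basis of $H$. I do not expect a real obstacle; the only substantive input is the vanishing $\HH^2_{1,g}(L)=0$ together with the nonvanishing of $[\omega]$, both already established.
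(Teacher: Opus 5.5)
Your proposal is correct and follows the paper's proof step for step. The shared steps are: the index-two recognition via Theorem~\ref{thm:classification-index-two}; nonvanishing of $[\omega]$ via Lemma~\ref{lem:prelim-extension-injection}; normalizing the support by a group-like element; ruling out support $(1,g)$ by $\HH^2_{1,g}(L)=0$; a gauge change with $b\in L^+$ to reach $\omega_1$; and the same filtration-degree and basis arguments. One small addition of yours goes beyond the paper. You observe that $\eta\mapsto(k\otimes k)\eta$ is an isomorphism of complexes, which justifies a point the paper only asserts; alternatively, you could simply reapply Lemma~\ref{lem:prelim-extension-injection} to $g_1^{-1}z\notin L$.
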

	\begin{proof}
		Since $L=H_3$, $H_0=L_0$, and $\dim H=2\dim L$,
		Theorem~\ref{thm:classification-index-two} shows that $H$ is a
		normalized quadratic extension of $L$. Thus there exists
		$z\in\ker\epsilon_H$ such that $H=L\oplus Lz$ and
		\[
		\Delta(z)=z\otimes p+q\otimes z+\omega
		\]
		for some $p,q\in\G(L)$ and $\omega\in L^+\otimes L^+$.
		
		By Lemma~\ref{lem:prelim-extension-injection}, the class
		$[\omega]\in\HH^2_{p,q}(L)$ is nonzero. Replacing $z$ by $p^{-1}z$
		as in Remark~\ref{rmk:unmarked-supports}, we may normalize the
		support to $(1,p^{-1}q)$. The corresponding coproduct correction
		still represents a nonzero cohomology class.
		Proposition~\ref{pro:H2-dim8-m1} gives
		\[
		\HH^2_{1,1}(L)=\K[\omega_1],\qquad \HH^2_{1,g}(L)=0,
		\]
		and hence $p^{-1}q=1$. Moreover, the correction class is a nonzero
		scalar multiple of $[\omega_1]$. After rescaling $z$ and adding an
		element of $L^+$ to remove the resulting coboundary, we may choose
		$z$ so that
		\[
		\Delta(z)=z\otimes1+1\otimes z+\omega_1.
		\]
		This proves \eqref{eq:section5-nonprimitive-z}.
		
		Since $\omega_1\in L\otimes L=H_3\otimes H_3$, the preceding
		coproduct formula gives
		\[
		\Delta(z)\in H_0\otimes H+H\otimes H_3,
		\]
		and hence $z\in H_4$. On the other hand, $z\notin L=H_3$, so $z$
		has filtration degree exactly $4$.
		
		Finally, $H=L\oplus Lz$ and $L$ has basis
		$\{g^ix^jy^k:i,j,k\in\{0,1\}\}$, so the monomials
		$g^ix^jy^kz^\ell$, $i,j,k,\ell\in\{0,1\}$, form a basis of $H$.
	\end{proof}
	
	\begin{lem}\label{lem:m1-dim16-relations}
		All extensions of the normalized nontrivial-support bases are
		$H_1(\epsilon,\kappa,\tau)$, obtained by adjoining $z$ with
		\begin{gather}
			[g,z]=\epsilon(yx+y+x+1)q,\label{eq:section5-m1-gz}\\
			[x,z]=\kappa yq+\tau q,\qquad
			[y,z]=\tau xq+\kappa q,\label{eq:section5-m1-comm}\\
			z^2=\epsilon z+\tau yxq+\kappa yq.
			\label{eq:section5-m1-square}
		\end{gather}
		Here $\tau\in\K$ and $\epsilon\tau=0$.
		Together with \eqref{eq:section5-nonprimitive-z}, these are necessary
		and sufficient conditions for a Hopf algebra with the asserted PBW basis.
	\end{lem}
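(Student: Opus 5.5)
We follow the pattern of Lemma~\ref{lem:m0-dim16-relations}, now over the base $L=L_1(\epsilon,\kappa)$ with $q=1+g$. By Lemma~\ref{lem:m1-dim16-coproduct}, $H=L\oplus Lz$ and $\Delta(z)=z\otimes1+1\otimes z+\omega_1$. Hence $H$ is a normalized quadratic extension with support $(1,1)$. Lemma~\ref{lem:section4-y}-type reasoning, via \eqref{eq:comp1}--\eqref{eq:comp2} and the triviality of characters of $\Z_2$ in characteristic $2$, should give $\sigma=\id_L$. For the $x$-component this uses that the support $g\ne1$ kills the scalar $\chi(x)$, exactly as in the $s\ne1$ case there. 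Thus $\delta=[z,-]$ is an ordinary derivation of $L$, and the data reduce to $\delta(g),\delta(x),\delta(y)$ together with $u,v$ in $z^2=u+vz$.

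\emph{Step 1: commutators from coproducts.} For each generator $a\in\{g,x,y\}$ we compute $\Delta([z,a])$ from $\Delta(z)$ and the base coproducts. We then exhibit explicit elements $c_a\in L$ with the same "non-primitive part". The candidates are $\epsilon(yx+y+x+1)q$ for $a=g$, $\kappa yq$ for $a=x$, and $\tau xq+\kappa q$ for $a=y$. The differences then lie in $\Pp_{g,g}$, $\Pp_{1,g}$ or $\Pp_{g,1}$, and $\Pp_{1,1}$, respectively. These skew-primitive spaces are known: $\Pp(H)=0$, $\Pp_{g,g}(H)=0$, and $\Pp_{1,g}(H)=\K q+\K x$ up to a group factor. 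This pins the commutators down to scalars. For $[x,z]$ the free scalar becomes $\tau q$, plus possibly a multiple of $x$, which is killed by applying $\delta$ to $x^2=\epsilon x$ and by the overlap $z^2x$, as in the earlier lemmas. The coefficient of $q$ in $[y,z]$ is then forced to equal $\kappa$, and its $xq$-coefficient to equal $\tau$. To see this, apply $\delta$ to the base relations $[x,y]=\kappa q$ and $y^2=\epsilon y+\kappa xq$, using $\delta(q)=[z,g]$.

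\emph{Step 2: the square.} We expand $\Delta(z)^2=(z\otimes1+1\otimes z+\omega_1)^2$ using the commutators from Step 1 and reduce $\omega_1^2$ with the base relations. We then find $w_0=\epsilon z+\tau yxq+\kappa yq$ such that $z^2-w_0$ is primitive, hence zero because $\Pp(H)=0$.

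\emph{Step 3: associativity.} We impose the conditions of Proposition~\ref{pro:associative-data} with $\sigma=\id$: $\delta^2=[u,-]+v\delta$, $\delta(u)=0$, $\delta(v)=0$, and $\delta$ must preserve the relations of $L$. The expected outcome is that the only surviving constraint is $\epsilon\tau=0$. It should come from either the overlap $zx^2$, where $\delta$ is applied to $x^2=\epsilon x$ with $[x,z]=\tau q$ and $[g,x]=\epsilon q$, or from $z^3$. Conversely, under $\epsilon\tau=0$ we verify all the identities of Definition~\ref{def:extension-data} and invoke Proposition~\ref{pro:reconstruction} for the Hopf structure and the PBW basis.

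The main obstacle is bookkeeping in characteristic $2$. There are many cancellations involving $q^2=0$, $[x,q]=\epsilon q$ and $[y,q]=\epsilon(x+1)q$, notably in $\omega_1^2$ and in $\delta^2$ on $g$. I would organize these computations by restricting to the three admissible cases $(\epsilon,\kappa)$ separately, which makes most terms vanish.
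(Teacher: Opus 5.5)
Your architecture matches the paper's. You compute $\Delta([a,z])$ for $a=g,x,y$ in that order and subtract an explicit element of $L$ with the same correction term. You then use $\Pp(H)=\Pp_{g,g}(H)=0$ and $\Pp_{1,g}(H)=\K q\oplus\K x$, read off $z^2$ from the square coproduct, kill the stray $\rho x$ in $[x,z]$ with the overlaps $zx^2$ and $z^2x$, and finish with Proposition~\ref{pro:reconstruction}. However, three of the justifications you attach to this skeleton are wrong and should be replaced.

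First, the coefficients of $[y,z]$ are not forced by applying $\delta$ to $[x,y]=\kappa q$ and $y^2=\epsilon y+\kappa xq$. For $\epsilon=0$ the element $q$ is central, $q^2=0$, and $\delta(q)=0$. Hence both identities hold for every $\delta(y)\in\K xq+\K q$ and give no information. The coefficients are forced by the coproduct, which is already your Step~1 mechanism:
\[
\Delta([y,z])=[y,z]\otimes1+1\otimes[y,z]+[xg,z]\otimes x+xg\otimes[x,z]+[\Delta(y),\omega_1].
\]
Here the $\rho$-terms cancel and the $\tau$-terms equal $\tau\,d^1_{1,1}(xq)$. The $\kappa$-terms, together with $[\Delta(y),\omega_1]$, collapse to $\kappa^2q\otimes q=d^1_{1,1}(\kappa^2q)$. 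Since $\Pp(H)=0$, this gives $[y,z]=\tau xq+\kappa^2q$.

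Second, and relatedly, \eqref{eq:comp1}--\eqref{eq:comp2} do not give $\sigma=\id_L$. They only give $\sigma(y)=y+\chi(y)$. On $L_1(1,0)$ the assignment $g\mapsto1$, $x\mapsto0$, $y\mapsto1$ is a character, and the resulting winding map satisfies both equations. You do not actually need this preliminary step. The coproduct comparisons are computations in $H$ that never involve $\sigma$, and they show $[a,z]\in L$ for every generator. That is how the paper proceeds.

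Third, the constraint $\epsilon\tau=0$ does not come from $zx^2$. In that overlap $[\delta(x),x]=\tau[q,x]=\epsilon\tau q$, which equals the $q$-part of $\epsilon\delta(x)$, so it yields only $\epsilon\rho=0$. The constraint comes from exactly the check you misplaced in the first point: applying $\delta$ to $[x,y]=\kappa q$. Take $(\epsilon,\kappa)=(1,0)$ and $\delta(y)=\tau xq$ from the coproduct. Then $[q,y]=(x+1)q$ and $[x,xq]=xq$, so $\delta([x,y])=[\tau q,y]+[x,\tau xq]=\tau q$, whereas $\kappa\delta(q)=0$; hence $\tau=0$. Your Step~3 item ``$\delta$ preserves the relations of $L$'' would detect this, but not at the place you predict.
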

	\begin{proof}
		In this support,
		$\Pp(H)=\Pp_{g,g}(H)=0$ and $\Pp_{1,g}(H)=\K q\oplus\K x$.
		The coproduct comparison for $[g,z]$ gives
		\eqref{eq:section5-m1-gz}, with no $(g,g)$-primitive correction.
		For $[x,z]$, subtracting $\kappa yq$ leaves a $(1,g)$-primitive
		element, so initially
		\[
		[x,z]=\kappa yq+\tau q+\rho x.
		\]
		The coproduct of $[y,z]$ then gives
		$[y,z]=\tau xq+\kappa^2q$. The terms involving $\rho$ cancel.
		The square coproduct determines
		$z^2=\epsilon z+\tau yxq+\kappa^2yq$, since there is no primitive
		correction. The normalized values of $\kappa$ satisfy $\kappa^2=\kappa$.
		The overlap $zx^2$ gives $\epsilon\rho=0$, and the $x$ coefficient
		of $z^2x$ gives $\rho^2+\epsilon\rho=0$, so $\rho=0$.
		Applying $[z,-]$ to $[x,y]=\kappa q$ now leaves
		$\epsilon\tau q$, proving $\epsilon\tau=0$.
		
		Conversely let $\delta=[z,-]$ have the displayed values on $g,x,y$,
		and put $u=\tau yxq+\kappa yq$, $v=\epsilon$.
		Substitution in the base relations gives a derivation of $L$ under
		$\epsilon\kappa=\epsilon\tau=0$, $\kappa^2=\kappa$.
		The identities $q^2=0$ and the base commutators give
		$\delta^2=[u,-]+v\delta$ and $\delta(u)=\delta(v)=0$.
		Substitution on the generators verifies the coproduct compatibility
		and the square identity. Proposition~\ref{pro:reconstruction} gives
		the Hopf algebra and its dimension. Its graded coalgebra has the
		required one-dimensional infinitesimal braiding.
	\end{proof}
	
	\begin{pro}\label{pro:section5-m1-classification}
		For nontrivial support, a complete list is
		\[
		H_1(0,0,0),\qquad H_1(0,0,1),\qquad
		\mathcal M_3(t):=H_1(0,1,t)\ (t\in\K),\qquad H_1(1,0,0).
		\]
		The only parameter identification is
		\[
		\mathcal M_3(t)\cong\mathcal M_3(t')
		\quad\Longleftrightarrow\quad t^3=(t')^3.
		\]
		No isomorphisms occur between different displayed items.
	\end{pro}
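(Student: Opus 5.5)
The plan is to mirror the proofs of Propositions~\ref{pro-iso-Zn} and \ref{prop:m0-dim16-classification}. First I reduce the parameter space using Lemma~\ref{lem:m1-dim16-relations}. Then I compute the general form of a Hopf isomorphism $\Phi:H_1(\epsilon,\kappa,\tau)\to H_1(\epsilon',\kappa',\tau')$, and from it read off the invariants.

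\emph{Reduction of parameters.} The base pair $(\epsilon,\kappa)$ ranges over $\{(0,0),(0,1),(1,0)\}$, and Lemma~\ref{lem:m1-dim16-relations} requires $\epsilon\tau=0$. So for $\epsilon=1$ only $H_1(1,0,0)$ occurs. For $(\epsilon,\kappa)=(0,0)$ I use the rescaling $x\mapsto Ax$, $y\mapsto A^2y$, $z\mapsto A^4z$, which preserves \eqref{eq:section5-nonprimitive-z}. A direct substitution in \eqref{eq:section5-m1-comm} shows that this multiplies $\kappa$ by $A^3$ and $\tau$ by $A^5$ (up to inverting $A$). Hence a nonzero $\tau$ normalizes to $1$, which gives $H_1(0,0,0)$ and $H_1(0,0,1)$. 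For $\kappa=1$ the scaling must satisfy $A^3=1$ to keep $\kappa=1$. Then $\tau$ is multiplied by $A^5=A^2$, which again runs over the cube roots of unity. This gives $t\sim\zeta t$ with $\zeta^3=1$, that is, $t^3=(t')^3$.

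\emph{Rigidity of isomorphisms.} By Lemma~\ref{lem:section5-base}, $L=H_3$ is preserved by every Hopf isomorphism, so $\Phi|_L$ is an isomorphism of the bases $L_1(\epsilon,\kappa)\cong L_1(\epsilon',\kappa')$. By Corollary~\ref{cor:order-two-base} these bases are pairwise non-isomorphic, which forces $(\epsilon,\kappa)=(\epsilon',\kappa')$. Since $g$ is the unique nontrivial group-like, $\Phi(g)=g'$. I then apply the coproducts successively, using $\Pp_{1,g}(H')=\K q'\oplus\K x'$, $\Pp(H')=\Pp_{g,g}(H')=0$, and Lemma~\ref{lem:generator-rigidity} for $z$. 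This gives
\[
\Phi(x)=Ax'+cq',\qquad \Phi(y)=A^2y'+(\text{terms in }x'q',\,q'),\qquad \Phi(z)=A^4z'+b,\quad b\in L'^+,
\]
where the form of $b$ is fixed by matching $\omega_1$ up to a $1$-cocycle. Any remaining freedom in $b$ lies in $\Pp(H')=0$, so $b$ is determined by $A$ and $c$.

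\emph{Main obstacle.} The hard step is showing that the corrections $cq'$ and $b$ do not alter the commutator $[\Phi(x),\Phi(z)]$ modulo the scaling, so that the coefficient of $q'$ there is exactly $A^5\tau'$. I will do this as in Proposition~\ref{pro-iso-Zn}. The element $q'$ satisfies $(q')^2=0$, and for $\epsilon=0$ it commutes with $x',y'$. Its commutator with $z'$ is read off from \eqref{eq:section5-m1-gz}, which vanishes when $\epsilon=0$. Consequently every correction term contributes only multiples of $q'$ times elements of $L'^+q'=0$, or pieces already accounted for in the $y'q'$ coefficient. Comparing the $q'$-coefficient of $\Phi([x,z])=\Phi(\kappa yq+\tau q)$ then yields $\kappa=A^3\kappa'$ and $\tau=A^5\tau'$.

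\emph{Conclusion.} These two relations give exactly the identifications listed in the statement. They also separate $H_1(0,0,0)$ from $H_1(0,0,1)$, since $\tau=0$ is invariant when $\kappa=0$. Different values of $(\epsilon,\kappa)$ are already separated by the non-isomorphism of the bases, so no isomorphisms occur between different displayed items.
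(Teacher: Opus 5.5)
Your outline matches the paper's. $L=H_3$ is intrinsic, $\Phi(g)=g'$, $\Phi(x)=Ax'+cq'$, and comparing $[x,y]$ and $[x,z]$ gives $\kappa=A^3\kappa'$ and $\tau=A^5\tau'$. Your parameter reduction by the scalings $(A,A^2,A^4)$ is also correct. The gap is exactly in the step you flag as the main obstacle. Your justification uses ``$L'^+q'=0$'', which is false: $x'q'=x'+x'g'$ is a nonzero PBW basis element. What holds is only that $q'$ is central with $(q')^2=0$. That alone does not control the correction $b$ in $\Phi(z)=A^4z'+b$, and you never determine $b$. Since $q'$ is central, $[\Phi(x),\Phi(z)]=A^5[x',z']+A[x',b]$. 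If $b$ had a component $\beta y'$, then $A[x',b]$ would add $A\beta\kappa'q'$ to precisely the $q'$-coefficient you compare. So for $\kappa'=1$ the conclusion $\tau=A^5\tau'$ does not follow from what you wrote.

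What you need is $b\in\K x'+q'L'$. Then $[x',b]=0$, because $[x',L']\subseteq q'L'$ (from $[x',g']=[x',x']=0$ and $[x',y']=\kappa'q'$) and $q'$ is central of square zero. Likewise $\Phi(y)q'=A^2y'q'$. You can get this in either of two ways:
\begin{itemize}
\item Compute $b=A^3c\,x'y'q'+A^2c^2\,y'q'+c^4q'$ directly from $d^1_{1,1}(b)=\Phi(\omega_1)-A^4\omega_1'$.
\item Pass to the quotient by the Hopf ideal $H'q'$. Every term of $\Phi(\omega_1)-A^4\omega_1'$ contains $q'$, so $\bar b$ is primitive in the connected quotient $L'/q'L'$, whose primitive space is $\K\bar x'$.
\end{itemize}
The paper avoids tracking corrections altogether. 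It checks that $x\mapsto x+cq$, $y\mapsto y+cxq+c^2q$, $z\mapsto z+cxyq+c^2yq+c^4q$ is a Hopf automorphism of $H_1(0,\kappa,\tau)$ with unchanged parameters. Composing $\Phi$ with it gives $c=0$, and then $\Pp(H')=0$ forces $\Phi(y)=A^2y'$ and $\Phi(z)=A^4z'$.

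Separately, Corollary~\ref{cor:order-two-base} only reduces to the three bases; it does not prove them pairwise non-isomorphic. Add an invariant, e.g.\ $L_1(0,0)$ is commutative, while $g$ is central in $L_1(0,1)$ but not in $L_1(1,0)$.
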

	\begin{proof}
		Consider first $\epsilon=0$. In any such algebra the substitutions
		\begin{equation}\label{eq:section5-m1-shift}
			X=x+cq,\quad Y=y+cxq+c^2q,\quad
			Z=z+cxyq+c^2yq+c^4q
		\end{equation}
		preserve the coproducts and all defining relations with the same
		parameters $\kappa,\tau$. Indeed $q$ is central, $q^2=0$, and
		\[
		[X,Z]=\kappa Yq+\tau q,\quad
		[Y,Z]=\tau Xq+\kappa q,\quad
		Z^2=\tau YXq+\kappa Yq.
		\]
		The coproduct correction for $Z$ is the coboundary of
		$cxyq+c^2yq+c^4q$.
		
		An isomorphism fixes $g$ and has $x\mapsto Ax'+cq$ with $A\ne0$.
		Composing with a shift \eqref{eq:section5-m1-shift} removes $c$.
		The absence of primitive elements then forces
		$y\mapsto A^2y'$ and $z\mapsto A^4z'$.
		Comparing $[x,y]$ and $[x,z]$ yields
		\[
		\kappa=A^3\kappa',\qquad \tau=A^5\tau'.
		\]
		If $\kappa=0$, these equations normalize $\tau$ to $0$ or $1$.
		If $\kappa=\kappa'=1$, they require $A^3=1$, so the remaining
		orbits on $\tau$ are exactly those with equal cubes.
		Conversely these scalings preserve all the relations and coproducts.
		
		The condition $\epsilon=1$ forces $\kappa=\tau=0$, giving one
		additional algebra. The commutator $[g,x]$ separates it from
		$\epsilon=0$. Within $\epsilon=0$, the vanishing of $[x,y]$
		separates $\kappa=0$ and $\kappa=1$, and commutativity separates
		the two cases with $\kappa=0$.
	\end{proof}
	
	\begin{thm}\label{thm:section5-classification}
		Every pointed Hopf algebra of dimension $16$ with eight-dimensional
		diagram and one-dimensional infinitesimal braiding is represented in
		Proposition~\ref{prop:m0-dim16-classification} or
		Proposition~\ref{pro:section5-m1-classification}.
		The parameter identifications are exactly those stated there.
		There are nine individual representatives and four parameter families.
	\end{thm}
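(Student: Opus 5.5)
The plan is to assemble the theorem from the results already established in this section, in the same way Theorem~\ref{thm:dim4-extension-classification} was assembled for $\dim R=4$. First, since $\dim H=16$ and $\dim R=8$, we get $|\G(H)|=2$, so $\G(H)=\langle g\rangle\cong\Z_2$, and the support $s$ of the infinitesimal braiding lies in $\{1,g\}$. Lemma~\ref{lem:section5-base} produces the intrinsic eight-dimensional Hopf subalgebra $L=\langle H_2\rangle=H_3$. It also shows that $L$ is one of the algebras of Corollary~\ref{cor:order-two-base}, and that every Hopf isomorphism between two such $H$ preserves $L$. Since $H_0=L_0$ and $\dim H=2\dim L$, Theorem~\ref{thm:classification-index-two} applies, so every such $H$ is a normalized quadratic extension of $L$.

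Next I split according to $s$. For $s=1$, Proposition~\ref{pro:H2-dim8-m0} and Lemma~\ref{lem:m0-dim16-extension} fix the coproduct of $z$ as \eqref{eq:section5-primitive-z}. Lemma~\ref{lem:m0-dim16-relations} then shows that $H\cong H_0(\epsilon,\beta,\gamma,\lambda,\mu,\nu)$ with the constraints \eqref{eq:section5-m0-conditions}. Finally, Propositions~\ref{prop:m0-dim16-iso} and~\ref{prop:m0-dim16-classification} reduce these algebras to the listed representatives $\mathcal T_0,\mathcal T_1,\mathcal E(r,d)$ and the families $\mathcal N_1,\mathcal N_2,\mathcal N_3$, with the stated identifications. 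For $s=g$, Proposition~\ref{pro:H2-dim8-m1} and Lemma~\ref{lem:m1-dim16-coproduct} fix the coproduct of $z$ as \eqref{eq:section5-nonprimitive-z}. Lemma~\ref{lem:m1-dim16-relations} gives $H\cong H_1(\epsilon,\kappa,\tau)$, and Proposition~\ref{pro:section5-m1-classification} gives the list $H_1(0,0,0),H_1(0,0,1),H_1(1,0,0)$ together with the family $\mathcal M_3$.

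It remains to show that there are no isomorphisms between the two support cases. Any isomorphism maps the primitive space $\Pp(H)$ onto $\Pp(H')$. For $s=1$ we have $\Pp(H)=\K x\neq0$, while for $s=g$ we have $\Pp(H)=0$, so algebras with different supports are never isomorphic. Since $L$ is intrinsic, every isomorphism restricts to a Hopf isomorphism of the base algebras. By Theorem~\ref{thm:orbit} and Remark~\ref{rmk:unmarked-supports}, the cited propositions therefore capture all isomorphisms, and the within-case identifications are exactly those stated. Counting gives $2+4=6$ individual representatives for $s=1$ and $3$ for $s=g$, so nine in total. The families are $\mathcal N_1,\mathcal N_2,\mathcal N_3$ and $\mathcal M_3$, so four in total.

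The main obstacle is completeness of the isomorphism analysis. Here I would argue that the generator changes used in Propositions~\ref{prop:m0-dim16-iso} and~\ref{pro:section5-m1-classification} are forced. The image of $x$ is determined up to scalar and the $(1,g)$-skew-primitive correction $cq$, because of the description of the skew-primitive spaces. The image of $y$ is then determined by its coproduct up to primitive terms, and Lemma~\ref{lem:generator-rigidity} shows that the image of $z$ is $\alpha z'$ plus an element of $L^+$. This ensures that the parameter equations in those propositions exhaust all isomorphisms.
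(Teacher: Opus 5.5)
Your proposal is correct and takes essentially the same route as the paper's proof. Both assemble the theorem from four ingredients: the intrinsic base $L=H_3$ of Lemma~\ref{lem:section5-base}; the cohomology and extension lemmas that fix $\Delta(z)$ in each support case; the relation lemmas and isomorphism propositions; and the separation of the two support cases by $\dim\Pp(H)\in\{1,0\}$.

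Your explicit count $6+3=9$ and your remark that Lemma~\ref{lem:generator-rigidity} forces $\Phi(z)=\alpha z'+b$ are harmless additions that the paper leaves implicit. In that remark $k=1$, since both supports are $(1,1)$.
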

	\begin{proof}
		By Lemma~\ref{lem:section5-base}, $L$ is intrinsic and
		$\dim L=8$, hence preserved by every isomorphism; moreover
		$L=H_3$ and $\G(H)=\langle g\rangle\cong\Z_2$. With $L$ fixed,
		Propositions~\ref{pro:H2-dim8-m0} and~\ref{pro:H2-dim8-m1}
		give the cohomology classes $\omega_0$ and $\omega_1$, so
		Lemmas~\ref{lem:m0-dim16-extension} and~\ref{lem:m1-dim16-coproduct}
		fix the coproduct of $z$ in the two support cases, and
		Lemmas~\ref{lem:m0-dim16-relations} and~\ref{lem:m1-dim16-relations}
		give the compatible multiplication data. Since every isomorphism
		preserves $L$, the isomorphism criteria of
		Proposition~\ref{prop:m0-dim16-iso} and
		Proposition~\ref{pro:section5-m1-classification} cover all possible
		images of the generators. The two cases satisfy $\dim\Pp(H)=1$ and
		$\dim\Pp(H)=0$, respectively, hence are disjoint. Therefore the
		lists of Proposition~\ref{prop:m0-dim16-classification} and
		Proposition~\ref{pro:section5-m1-classification} are exhaustive,
		with the stated identifications.
	\end{proof}

	\vskip10pt \centerline{\bf ACKNOWLEDGMENT}
	
	\vskip10pt The author wishes to thank Prof. G. Carnovale for her warm hospitality during his visit to Universit\`{a} degli Studi di Padova, and Profs. Quanshui Wu and Xingting Wang for their valuable help and encouragement during his visit to the Shanghai Center for Mathematical Sciences. Part of this work was completed during these visits. The author is partially supported by the National Natural Science Foundation of China (Grant Nos. 11926353 and 12401041).

\end{document}